\pgfplotsset{compat=1.15}
\newcommand{\sign}{\text{sign}}
\DeclareMathOperator{\tr}{tr}
\DeclareMathOperator{\erf}{erf}
\DeclareMathOperator{\Var}{Var}
\newtheorem{theorem}{Theorem}[section]
\newtheorem{corollary}{Corollary}[theorem]
\newtheorem{lemma}[theorem]{Lemma}
\newtheorem*{remark}{Remark}
\theoremstyle{definition}
\newtheorem{definition}{Definition}[section]
\title{Improved variants of the  Hutch++ algorithm for trace estimation\footnote{This work has been supported by the SNSF research project \textit{Fast algorithms from low-rank updates}, grant number: 200020\_178806. Institute of Mathematics, EPF Lausanne, 1015 Lausanne, Switzerland. E-mails: \href{mailto:david.persson@epfl.ch}{david.persson@epfl.ch}, \href{mailto:alice.cortinovis@epfl.ch}{alice.cortinovis@epfl.ch}, \href{mailto:daniel.kressner@epfl.ch}{daniel.kressner@epfl.ch}}}
\author{David Persson\footnotemark[1] \and Alice Cortinovis\footnotemark[1] \and Daniel Kressner\footnotemark[1]}
\begin{document}
\maketitle

\begin{abstract}
This paper is concerned with two improved variants of the Hutch++ algorithm for estimating the trace of a square matrix, implicitly given through matrix-vector products. Hutch++ combines randomized low-rank approximation in a first phase with stochastic trace estimation in a second phase. In turn, Hutch++ only requires $O\left(\varepsilon^{-1}\right)$ matrix-vector products to approximate the trace within a relative error $\varepsilon$ with high probability, provided that the matrix is symmetric positive semidefinite. This compares favorably with the $O\left(\varepsilon^{-2}\right)$ matrix-vector products needed when using stochastic trace estimation alone. In Hutch++, the number of matrix-vector products is fixed a priori and distributed in a prescribed fashion among the two phases. In this work, we derive an adaptive variant of Hutch++, which outputs an estimate of the trace that is within some prescribed error tolerance with a controllable failure probability, while splitting the matrix-vector products in a near-optimal way among the two phases. For the special case of a symmetric positive semi-definite matrix, we present another variant of Hutch++, called Nyström++, which utilizes the so called Nyström approximation and requires only one pass over the matrix, as compared to two passes with Hutch++. We extend the analysis of Hutch++ to Nyström++. Numerical experiments demonstrate the effectiveness of our two new algorithms. 
%This paper is concerned with two improved variants of the Hutch++ algorithm for estimating the trace $\tr(\bm{A})$ of a square matrix $\bm{A}$, implicitly given through matrix-vector products. Hutch++ combines randomized low-rank approximation in a first phase with stochastic trace estimation in a second phase. In turn, Hutch++ only requires $O\left(\varepsilon^{-1}\right)$ matrix-vector products to approximate $\tr(\bm{A})$ within a relative error $\varepsilon$ with high probability. This compares favorably with the $O\left(\varepsilon^{-2}\right)$ matrix-vector products needed when using stochastic trace estimation alone. In Hutch++, the number of matrix-vector products is fixed a priori and distributed in a prescribed fashion among the two phases. In this work, we derive an adaptive variant of Hutch++, which outputs an estimate of $\tr(\bm{A})$ that is within some prescribed error tolerance with a prescribed success probability, while splitting the matrix-vector products in a near-optimal way among the two phases. For the special case of a symmetric positive semi-definite $\bm{A}$, we present another variant of Hutch++, called Nyström++, which utilizes the so called Nyström approximation and requires only one pass over the matrix $\bm{A}$, as compared to two passes with Hutch++. We extend the analysis of Hutch++  to Nyström++. Numerical experiments demonstrate the effectiveness of our two new algorithms.
\end{abstract}

\pagestyle{myheadings}
\thispagestyle{plain}

%Section 1
\section{Introduction}

Computing or estimating the trace of a large symmetric matrix $\bm{A} \in \mathbb{R}^{n \times n}$,
\begin{equation*}
    \tr(\bm{A}): = \sum\limits_{i=1}^n \bm{A}_{ii},
\end{equation*}
is an important problem that arises in a wide variety of applications, such as triangle counting in graphs~\cite{avron}, Frobenius norm estimation~\cite{kressnerbujanovic,gratton}, quantum chromodynamics~\cite{thron1998pade}, computing the Estrada index of a graph~\cite{pena,estrada}, computing the log-determinant \cite{affandi,kressnercortinovis,saibabaipsen,wainwright2006log} and many more. For an excellent overview of the applications to this problem we refer to~\cite{ubaru}.

It can be surprisingly difficult to compute the trace. This difficulty arises if one does not have direct access to the entries of $\bm{A}$, but can only access $\bm{A}$ through matrix-vector products. This appears when, for example, $\bm{A}$ is a function of another matrix $\bm{B}$, such as $\bm{A} = \exp(\bm{B})$, $\bm{A} = \log(\lambda\bm{I} + \bm{B})$, $\bm{A} = \bm{B}^{-1}$ or $\bm{A} = \bm{B}^3$. Computing $\bm{A}$ (or even only its diagonal entries) explicitly in these situations is typically too expensive and may require up to $O(n^3)$ operations. On the other hand, computing (approximate) matrix-vector products $\bm{A}\bm{x}$ is tractable using, for example, Lanczos methods~\cite{higham2008functions,hochbruck}. 
%One can recover $\tr(\bm{A})$ by computing $n$ matrix-vector multiplies by noting that
%\begin{equation*}
%    \text{tr}(\bm{A}) = \sum\limits_{i=1}^n \bm{e}_i^T \bm{A} \bm{e}_i %= \sum\limits_{i=1}^{n} \bm{A}_{ii}
%\end{equation*}
%where $\bm{e}_i \in \mathbb{R}^{n}$ is the i:th column of the identity $\bm{I}$. However, performing $n$ matrix-vector multiplies with $\bm{A}$ is often also too expensive and we can only afford to do $m \ll n$ matrix-vector multiplies with $\bm{A}$. Therefore, one has to resort to other methods, such as randomized techniques, to estimate $\tr(\bm{A})$.\\\\

Hutchinson's method~\cite{hutchinson} for trace estimation builds on the following observation: If $\bm{x}$ is a random vector of length $n$ satisfying $\mathbb{E}\bm{x}\bm{x}^T = \bm{I}$ then
\begin{equation*}
    \mathbb{E}\bm{x}^T \bm{A} \bm{x} = \tr(\bm{A}).
\end{equation*}
Therefore, sampling $m$ such quadratic forms and computing the sample mean yields the following unbiased estimator of the trace:
\begin{equation}\label{eq:hutchinson}
    \tr_m(\bm{A}) :=\frac{1}{m}\sum\limits_{i=1}^m \bm{x}_i^T \bm{A} \bm{x}_i = \frac{1}{m} \tr\left(\bm{X}^T \bm{A} \bm{X} \right)\approx \tr(\bm{A}),
\end{equation}
where $\bm{X} = \begin{bmatrix} \bm{x}_1 & \cdots & \bm{x}_m \end{bmatrix}$ contains $m$ independent copies of $\bm{x}$. Common choices for the random vector $\bm{x}$ are standard Gaussians; the entries in $\bm{x}$ are independent identically distributed (i.i.d.) samples from $N(0,1)$, and Rademacher vectors; the entries in $\bm{x}$ are independently chosen to be $-1$ or $+1$ with equal probability. In this work, we choose $\bm{x}$ to be standard Gaussian. In this case, the variance of $\tr_m(\bm{A})$ is given by
\begin{equation}\label{eq:gaussian_variance}
    \Var(\tr_m(\bm{A})) = \frac{2}{m}\|\bm{A}\|_F^2.
\end{equation}

Under the assumption that $\bm{A}$ is symmetric positive semi-definite, one can derive bounds on $m$ that guarantee a small relative error with high probability:
\begin{equation}\label{eq:relerr}
    \mathbb{P}\left(\left|\tr_m(\bm{A}) - \tr(\bm{A})\right| \leq \varepsilon \tr(\bm{A})\right) \geq 1-\delta;
\end{equation}
see, e.g.,~\cite{avron2,gratton, Roosta-K_trace,Roosta-K}. When 
%log-determinant of a matrix or in triangle-counting in graphs. For an overview of applications and analysis for this problem we refer to \cite{affandi,avron,kressnercortinovis,saibabaipsen,thron1998pade,wainwright2006log}.
$\bm{A}$ is indefinite, aiming for such a relative bound is unrealistic, as can be easily seen for a non-zero matrix $\bm{A}$ with $\tr(\bm{A}) = 0$. Instead, one aims at deriving bounds on $m$ that guarantee a small \textit{absolute} error: 
\begin{equation}\label{eq:abserr}
    \mathbb{P}\left(|\tr_m(\bm{A}) - \tr(\bm{A})| \leq \varepsilon\right) \geq 1-\delta.
\end{equation}
It is well known that the number of samples needed to attain~\eqref{eq:relerr} or~\eqref{eq:abserr} grows at a rate proportional to $\varepsilon^{-2}$ as $\varepsilon \rightarrow 0$.
%one requires  multiplies with $\bm{A}$, where the hidden constants depend on the failure probability $\delta$ and the matrix $\bm{A}$.
To reduce the number of samples (and, in turn, the number of matrix-vector products), different variance reduction techniques were studied~\cite{gambhir,hpp,wu}. These methods aim at finding a decomposition 
\begin{equation}\label{eq:variancereduction}
    \tr(\bm{A}) = \tr(\bm{A}_1) + \tr(\bm{A}_2),
\end{equation}
such that $\tr(\bm{A}_1)$ can be computed explicitly and the stochastic estimator for $\tr(\bm{A}_2)$ has reduced variance, which -- in view of~\eqref{eq:gaussian_variance} -- means that $\bm{A}_2$ has reduced Frobenius norm.
Among these techniques, the Hutch++ algorithm presented in~\cite{hpp} guarantees an $\varepsilon$-relative error, as in \eqref{eq:relerr}, with only $O(\varepsilon^{-1})$ matrix-vector products, provided that $\bm{A}$ is symmetric positive semidefinite. In Hutch++, the matrix $\bm{A}_1$ in~\eqref{eq:variancereduction} is chosen to be a low-rank approximation of $\bm{A}$ obtained with the randomized SVD~\cite{rsvd}, and $\bm{A}_2 = \bm{A}-\bm{A}_1$. The resulting method is presented in Algorithm~\ref{alg:hpp}.
\begin{algorithm}[t]
\caption{Hutch++}
\label{alg:hpp}
\textbf{input:} Symmetric $\bm{A} \in \mathbb{R}^{n \times n}$. Number of matrix-vector products $m \in \mathbb{N}$ (multiple of $3$).\\
\textbf{output:} An approximation to $\tr(\bm{A}): \tr_{m}^{\mathsf{h++}}(\bm{A})$.
\begin{algorithmic}[1]
    \State Sample $\bm{\Omega} \in \mathbb{R}^{n \times \frac{m}{3}}$ with i.i.d. $N(0,1)$ or Rademacher entries.
    \State Compute $\bm{Y} = \bm{A}\bm{\Omega}$ \label{line:Y}.
    \State Get an orthonormal basis $\bm{Q} \in \mathbb{R}^{n \times \frac{m}{3}}$ for $\text{range}(\bm{Y})$.
    \State Sample $\bm{\Psi} \in \mathbb{R}^{n \times \frac{m}{3}}$ with i.i.d. $N(0,1)$ or Rademacher entries.
    \State \textbf{return} $\tr_m^{\mathsf{h++}}(\bm{A}) = \text{tr}(\bm{Q}^T \bm{A} \bm{Q}) + \frac{3}{m} \tr(\bm{\Psi}^T(\bm{I} - \bm{Q}\bm{Q}^T) \bm{A} (\bm{I} - \bm{Q}\bm{Q}^T) \bm{\Psi})$ \label{line:return}
\end{algorithmic}
\end{algorithm}
Hutch++ consists of two phases. The first phase is concerned with obtaining a low-rank approximation $\bm{A} \approx \bm{Q}\bm{Q}^T \bm{A}$ and exploits the cyclic property of the trace: $\tr(\bm{Q}\bm{Q}^T \bm{A}) = \tr(\bm{Q}^T \bm{A}\bm{Q})$. It uses $\frac{2m}{3}$ matrix-vector products with $\bm{A}$: $\bm{A}\bm{\Omega}$ in line~\ref{line:Y} of Algorithm~\ref{alg:hpp} and $\bm{AQ}$ to compute $\tr(\bm{Q}^T \bm{A}\bm{Q})$ in line~\ref{line:return}. The second phase is concerned with estimating $\tr(\bm{A}-\bm{Q}\bm{Q}^T \bm{A}) = \tr((\bm{I} - \bm{Q}\bm{Q}^T) \bm{A} (\bm{I} - \bm{Q}\bm{Q}^T))$ via the stochastic trace estimator~\eqref{eq:hutchinson}. It uses the remaining $\frac{m}{3}$ matrix-vector products with $\bm{A}$ to compute $\bm{A} ( (\bm{I} - \bm{Q}\bm{Q}^T)\bm{\Psi})$ in line~\ref{line:return} of Algorithm~\ref{alg:hpp}.

\subsection{Contributions}

% While the theory in \cite{hpp} is concerned with SPSD matrices, it can immediately be carried over provided $\frac{\|\bm{A}\|_*}{|\tr(\bm{A})|}$ is moderate. Numerical experiments with indefinite matrices were presented in \cite{hpp} and Hutch++ performed well even for indefinite matrices. In particular, the Hutch++ algorithm performed particularly well on $\bm{A} = \bm{B}^3$, where $\bm{B}$ is an adjacency matrix of a graph. In the cases presented in \cite{hpp}, the matrices satisfy $\frac{\|\bm{A}\|_*}{|\tr(\bm{A})|} \approx 2$. In this paper we extend the theory in \cite{hpp} to indefinite matrices, and we prove this using the theory from the randomized SVD \cite{rsvd}.

The effectiveness of the two phases of Hutch++ depends on the singular values of $\bm{A}$. When $\bm{A}$ admits an accurate low-rank approximation (e.g., when its singular values decay quickly), it would be sufficient to perform the approximation $\tr(\bm{A}) \approx \tr(\bm{A}_1)$, as suggested by~\cite{saibabaipsen} and skip the second phase of Hutch++. On the other hand, when all singular values of $\bm{A}$ are nearly equal, the variance reduction achieved during the first phase of Hutch++ is insignificant and all effort should be spent on the second phase, the stochastic trace estimator~\eqref{eq:hutchinson}. One can easily perceive a situation where it is preferable to spend maybe not all but most of the matrix-vector products on the stochastic trace estimator.
Algorithm~\ref{alg:hpp} does not recognize such situations; the number of matrix-vector products is fixed a priori and distributed in a prescribed fashion among the two phases.

Furthermore, the results in \cite{hpp} are of significant theoretical importance, but since the $O(\varepsilon^{-1})$ bound comes without explicit constants it gives practitioners little indication of how many matrix-vector products to use when estimating the trace of a given matrix $\bm{A}$. One can work out the constants, for example by using results in \cite{rsvd} if Gaussian random vectors are used, and conclude that, for fixed failure probability $\delta$, $m = C/\varepsilon$ matrix-vector products are sufficient to get an estimate of the trace with a relative error at most $\varepsilon$ with high probability, where $C$ is a constant depending only on $\delta$. However, this bound is in some cases a significant overestimation of the number of required matrix-vector products. To see this, consider the case when $\bm{A}$ has rapidly decaying singular values. In this case it would be sufficient to perform the approximation $\tr(\bm{A}) \approx \tr(\bm{A}_1)$, with potentially much fewer matrix-vector products than suggested by the $C/\varepsilon$ bound. On the other hand, when all singular values of $\bm{A}$ are nearly equal, the standard deviation of the stochastic trace estimator, which is proportional to $\|\bm{A}\|_F$, is much smaller than $\tr(\bm{A})$. Therefore, the relative error of the estimate produced by the stochastic trace estimator with only a few matrix-vector products, potentially much fewer than suggested by the $C/\varepsilon$ bound, will give a sufficiently accurate estimate of the trace with high probability.\footnote{To see this, recall that the standard deviation of the stochastic trace estimator with $m$ samples equals $\sqrt{2/m}\|\bm{A}\|_F$. This can be much smaller than $\varepsilon \tr(\bm{A})$ with $m$ potentially much smaller than $C/\varepsilon$, provided $\varepsilon$ is not too small.}

In this work, we develop an adaptive version of Hutch++ to address the above mentioned issues. We start with developing a prototype algorithm which given a prescribed tolerance $\varepsilon$ and failure probability $\delta$ outputs an estimate of the trace of $\bm{A}$, denoted $\tr_{\mathsf{adap}}(\bm{A})$, such that 
\begin{equation}\label{eq:adaperr}
    |\tr_{\mathsf{adap}}(\bm{A}) - \tr(\bm{A})| \leq \varepsilon
\end{equation}
holds, provably, with probability at least $1-\delta$. At the same time, our algorithm attempts to minimize the overall number of matrix-vector products by distributing them between the two phases in a near-optimal fashion. Then we modify the prototype algorithm to develop a more efficient adaptive trace estimation algorithm, which will be A-Hutch++. Note, however, that the potential for improving Hutch++ is limited, in~\cite{hpp} the $O(\varepsilon^{-1})$ bound mentioned above is proven to be optimal up to a $\log(\varepsilon^{-1})$ factor. In practice, we observe that our adaptive version of Hutch++ is never worse than the original Hutch++ and often outperforms it. Possibly more importantly, the output of our prototype algorithm comes with a probabilistic guarantee on the error of the estimate of $\tr(\bm{A})$ without requiring the user to know a priori how many matrix-vector products are needed. Our algorithm does not assume that $\bm{A}$ is positive definite, which is why we focus on estimating $\tr(\bm{A})$ up to a given absolute error.

Another aspect we address in this work is that  the Hutch++ algorithm requires several passes over the matrix $\bm{A}$; in Algorithm~\ref{alg:hpp} the matrix-vector products carried out in line 5 depend on earlier ones. In the streaming model it is desirable to design an algorithm that requires only one pass over $\bm{A}$ and if the matrix of interest is modified by a linear update $\bm{A} + \bm{E}$ one does not have to revisit $\bm{A}$ to update the output of the algorithm. Such a single pass property also increases parallelism. A single pass trace estimation algorithm was presented in~\cite{hpp} and we will call it \textit{Single Pass Hutch++} in this work. For a symmetric positive semidefinite matrix this algorithm comes with nearly the same theoretical guarantees as Hutch++, but performs worse in practice. In the case of a symmetric positive semidefinite matrix we develop a variation of Hutch++, Nyström++, utilizing the Nyström approximation~\cite{gittensmahoney}. Nyström++ requires only one pass over $\bm{A}$ and satisfies, up to constants, the theoretical guarantees of Hutch++. This new variation of Hutch++ significantly outperforms Single Pass Hutch++ and often outperforms Hutch++. 
\begin{remark}
Note that the word adaptive is used differently in~\cite{hpp}, where Hutch++ itself is already called adaptive because the matrix-vector products $\bm{AQ}$ depend on (and thus adapt to) the previously computed $\bm{A\Omega}$. In this work, we follow the convention where the term adaptive refers to an algorithm that adapts to a desired error bound. The Single Pass Hutch++ mentioned above is called
NA-Hutch++ (non-adaptive variant of Hutch++) in~\cite{hpp}.
\end{remark}

\subsection{Notation} \label{sec:notation}
%For a vector $\bm{x} \in \mathbb{R}^n$ and $p \in [1,\infty)$ we let $\|\bm{x}\|_p = \left(\sum\limits_{i=1}^n |x_i|^{p}\right)^{1/p}$ denote the $\ell_p$ norm of $\bm{x}$ and let $\|\bm{x}\|_{\infty} = \max\limits_{i=1,\ldots,n} |x_i|$ denote the $\ell_{\infty}$ norm of $\bm{x}$. For a matrix $\bm{A}$ let $\bm{U}\bm{\Sigma}\bm{V}^T$ be its thin SVD. Let $\bm{\sigma} = \text{diag}(\bm{\Sigma})$ be the vector containing the singular values of $\bm{A}$ in descending order. Consequently, the operator norm of $\bm{A}$ is expressed as $\|\bm{A}\|_2 = \|\bm{\sigma}\|_{\infty}$ and the Frobenius norm is expressed as $\|\bm{A}\|_F = \|\bm{\sigma}\|_2$. The nuclear norm of $\bm{A}$ is defined as $\|\bm{A}\|_* = \|\bm{\sigma}\|_1$. Let $\rho(\bm{A}) = \frac{\|\bm{A}\|_F^2}{\|\bm{A}\|_2^2}$ denote the stable rank of $\bm{A}$. Furthermore, for a matrix $\bm{B} \in \mathbb{R}^{n \times k}$, $k \geq n$, with linearly independent rows we let $\bm{B}^{\dagger} := \bm{B}^T(\bm{B}\bm{B}^T)^{-1}$ denote the Moore-Penrose pseudoinverse.
For a vector $\bm{x} \in \mathbb{R}^n$ we let $\|\bm{x}\|_2 = \big(\sum\limits_{i=1}^n x_i^2\big)^{1/2}$ denote the Euclidean norm of $\bm{x}$. We let $\sigma_1 \geq \sigma_2 \geq \cdots \geq \sigma_n \geq 0$ denote the singular values of $\bm{A}$. Thus, we have $\|\bm{A}\|_2 = \sigma_1$ and $\|\bm{A}\|_F^2 = \sigma_1^2 + \cdots + \sigma_n^2$. The nuclear norm of $\bm{A}$ is defined as $\|\bm{A}\|_* = \sigma_1 + \cdots + \sigma_n$. We let $\rho(\bm{A}) = \frac{\|\bm{A}\|_F^2}{\|\bm{A}\|_2^2}$ denote the stable rank of $\bm{A}$. Furthermore, for a matrix $\bm{B} \in \mathbb{R}^{m \times p}$, $p \geq m$, with linearly independent rows we let $\bm{B}^{\dagger} := \bm{B}^T(\bm{B}\bm{B}^T)^{-1}$ denote the Moore-Penrose pseudoinverse. We say that a random $n \times k$ matrix $\bm{\Omega}$ with i.i.d. $N(0,1)$ entries is a \textit{standard Gaussian matrix}. In the case of $k = 1$ we say that it is a \textit{standard Gaussian vector}. 

%Section 2
%\input{symmetric_hpp}

%Section 3
\section{Adaptive variants of Hutch++}\label{section:adap_hpp}

The aim of this section is to develop adaptive variants of Hutch++  (Algorithm~\ref{alg:hpp}). In a first step, we derive a prototype algorithm that aims at minimizing the number of matrix-vector products and comes with a guaranteed bound on the failure probability. The latter requires to estimate the variance or, equivalently (see~\eqref{eq:gaussian_variance}), the Frobenius norm, and this estimate needs additional matrix-vector products. Our final algorithm A-Hutch++ reuses these 
matrix-vector products for trace estimation and chooses the number of them in an adaptive fashion. In turn, this creates dependencies that complicate the analysis but do not lead to observed failure probabilities that are above the prescribed failure probability.

\subsection{Derivation of adaptive Hutch++}

%The adaptive version of Hutch++ attempts to seek the smallest number of matrix-vector products with $\bm{A}$ such that~\eqref{eq:abserr} holds for some prescribed failure probability $\delta > 0$. This is done by choosing a near optimal distribution of matrix-vector products with $\bm{A}$ between the two phases of Algorithm~\ref{alg:hpp}. The basic idea is as follows: 
The first phase of Algorithm~\ref{alg:hpp} requires $2r$ matrix-vector products with $\bm{A}$ to obtain a rank-$r$ approximation $\bm{Q}^{(r)}\bm{Q}^{(r)T}\bm{A}$, where we have added a superscript to emphasize the dependence on $r$.
Let $M(r)$ be the number of matrix-vector products with $\bm{A}$ in the second phase such that the stochastic trace estimator of 
\begin{equation} \label{eq:defAr}
     \bm{A}^{(r)}_{\text{rest}} := (\bm{I}-\bm{Q}^{(r)}\bm{Q}^{(r)T})\bm{A}(\bm{I}-\bm{Q}^{(r)}\bm{Q}^{(r)T})
\end{equation}
attains a prescribed accuracy and success probability. Then the total number of matrix-vector products with $\bm{A}$ is \begin{equation}\label{eq:matvecs}
    m(r) = 2r + M(r).
\end{equation}
We aim at minimizing $m(r)$ in order to obtain a near-optimal distribution of matrix-vector products between the two phases.\footnote{In practice we perform randomized low-rank approximations. Consequently, $\bm{A}^{(r)}_{\text{rest}}$ is random and therefore the function $m$ is a random variable. Hence, it can be ambiguous what it means to minimize $m$. To clarify this, first note that we always assume $r\leq n$, where $\bm{A}$ is $n \times n$, since when $r = n$ we are able to exactly compute $\tr(\bm{A})$. Therefore, we will never sample more than $n$ random vectors to obtain a low-rank approximation. Thus, let $\bm{\Omega} \in \mathbb{R}^{n \times n}$ be the random matrix from which we can construct $\bm{Q}^{(1)},\bm{Q}^{(2)},\ldots,\bm{Q}^{(n)}$. Conditioned on $\bm{\Omega}$ the function $m$ becomes deterministic and has a minimum, which is what we aim to find. We will describe a heuristic strategy to find the minimum in Section~\ref{sec:minimum}.} For this purpose, we first derive a suitable expression for $M(r)$.

\subsubsection{Analysis of trace estimation}\label{sec:analysis_ahpp}

The tightest tail bound available in the literature for the stochastic trace estimator $\tr_m(\bm{B})$ for a symmetric matrix $\bm{B}$ is~\cite[Theorem 1]{kressnercortinovis}, which states that
\begin{equation}\label{eq:tailbound_kressnercortinovis}
    \mathbb{P}\left(|\tr_m(\bm{B})-\tr(\bm{B})| \geq \varepsilon \right) \leq 2\exp\left(-m \frac{\varepsilon^2}{4\|\bm{B}\|_F^2 + 4\varepsilon\|\bm{B}\|_2}\right).
\end{equation}
In most situations of interest, the term involving $\|\bm{B}\|_2$ will be insignificant. The following lemma is a variation of~\eqref{eq:tailbound_kressnercortinovis} that suppresses this term for sufficiently large $m$, similar to~\cite[Lemma 2.1]{hpp}. We note in passing that \eqref{eq:tailbound_kressnercortinovis} as well as the following lemma can be improved; see Appendix \ref{appendix:tailbound}. 
\begin{lemma}\label{lemma:combinedbound_gaussian}
%Let $\bm{B} \in \mathbb{R}^{n \times n}$ be symmetric with stable rank $\rho(\bm{B})$. Let $\tr_m(\bm{B})$ be the stochastic trace estimator with $m$ matrix-vector products with i.i.d. standard Gaussian random vectors.
Given $\ell > 0$ assume that $m \geq \frac{4(1+\ell)\log\left(2/\delta\right)}{\ell^2 \rho(\bm{B})}$. Then the inequality
\begin{equation} \label{eq:combinedbound_gaussian}
    |\tr_m(\bm{B})-\tr(\bm{B})| \leq 2 \sqrt{1+\ell} \sqrt{\frac{\log\left(2/\delta\right)}{m}}\|\bm{B}\|_F
\end{equation}
holds with probability at least $1-\delta$.
\end{lemma}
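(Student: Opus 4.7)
The plan is to apply the tail bound (\ref{eq:tailbound_kressnercortinovis}) with the specific choice
\[
\varepsilon = 2\sqrt{1+\ell}\,\sqrt{\frac{\log(2/\delta)}{m}}\,\|\bm{B}\|_F,
\]
and show that for this $\varepsilon$ the right-hand side of (\ref{eq:tailbound_kressnercortinovis}) is at most $\delta$, provided the hypothesis on $m$ holds. Contrapositively, this immediately yields (\ref{eq:combinedbound_gaussian}) with probability at least $1-\delta$.

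First I would compute $m\varepsilon^2 = 4(1+\ell)\log(2/\delta)\|\bm{B}\|_F^2$ and substitute into the exponent of (\ref{eq:tailbound_kressnercortinovis}) to obtain
\[
-\,\frac{m\varepsilon^2}{4\|\bm{B}\|_F^2 + 4\varepsilon\|\bm{B}\|_2} \;=\; -\,\frac{(1+\ell)\log(2/\delta)}{1 + \varepsilon\|\bm{B}\|_2/\|\bm{B}\|_F^2}.
\]
For the right-hand side of (\ref{eq:tailbound_kressnercortinovis}) to be bounded by $\delta$, I would need this exponent to be at most $-\log(2/\delta)$, which reduces to the clean condition
\[
\varepsilon\,\frac{\|\bm{B}\|_2}{\|\bm{B}\|_F^2} \;\le\; \ell, \qquad \text{i.e.,} \qquad \varepsilon \;\le\; \ell\,\sqrt{\rho(\bm{B})}\,\|\bm{B}\|_F,
\]
after recognizing $\|\bm{B}\|_F^2/\|\bm{B}\|_2 = \sqrt{\rho(\bm{B})}\,\|\bm{B}\|_F$ via the definition of the stable rank.

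The final step is to verify that this inequality is equivalent to the stated lower bound on $m$. Substituting the chosen $\varepsilon$, squaring, and rearranging gives
\[
2\sqrt{1+\ell}\,\sqrt{\frac{\log(2/\delta)}{m}} \;\le\; \ell\,\sqrt{\rho(\bm{B})} \;\Longleftrightarrow\; m \;\ge\; \frac{4(1+\ell)\log(2/\delta)}{\ell^2\,\rho(\bm{B})},
\]
which is exactly the hypothesis, closing the argument.

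The main (and only) obstacle is bookkeeping: getting the factor $(1+\ell)$ to cancel correctly between the numerator and the $\|\bm{B}\|_F^2$ term in the denominator so that the lower-order contribution $\varepsilon\|\bm{B}\|_2$ absorbs cleanly into an $\ell$-fraction of the leading term. The role of the parameter $\ell$ is exactly to quantify this trade-off, in the spirit of \cite[Lemma~2.1]{hpp}: a smaller $\ell$ yields a sharper error bound at the cost of a larger required sample size $m$.
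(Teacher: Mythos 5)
Your proposal is correct and follows essentially the same route as the paper's proof: plug the target $\varepsilon$ into the Cortinovis--Kressner tail bound~\eqref{eq:tailbound_kressnercortinovis}, then use the hypothesis on $m$ to show that the term $\varepsilon\|\bm{B}\|_2$ in the denominator is at most $\ell\|\bm{B}\|_F^2$, so the exponent is at most $-\log(2/\delta)$. The only cosmetic difference is that you normalize by $\|\bm{B}\|_F^2$ and phrase the side condition via the stable rank, whereas the paper keeps the $\|\bm{B}\|_F$ factors explicit; the algebra is equivalent.
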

\begin{proof}
Inserting the right-hand side of~\eqref{eq:combinedbound_gaussian}, $
    \varepsilon := 2\sqrt{1+\ell} \sqrt{\frac{\log(2/\delta)}{m}}\|\bm{B}\|_F
$ , into~\eqref{eq:tailbound_kressnercortinovis} one obtains the desired result:
\begin{eqnarray*}
 \mathbb{P}\left(|\tr_m(\bm{B})-\tr(\bm{B})| \geq \varepsilon \right) & 
\le    & 2\exp\left(-\frac{(1+\ell) \log(2/\delta) \|\bm{B}\|_F}{\|\bm{B}\|_F + 2\sqrt{1+\ell} \sqrt{\frac{\log(2/\delta)}{m}} \|\bm{B}\|_2}\right) \\
&\le & 
2\exp\left(-\frac{(1+\ell) \log(2/\delta) \|\bm{B}\|_F}{(1+\ell)\|\bm{B}\|_F }  \right) = \delta,
\end{eqnarray*}
where the second inequality utilizes 
\begin{equation*}
    \ell \|\bm{B}\|_F \geq 2\sqrt{1+\ell} \sqrt{\frac{\log(2/\delta)}{m}}\|\bm{B}\|_2,
\end{equation*}
a consequence of the assumption on $m$. 
\end{proof}
\begin{comment}
\begin{lemma}\label{lemma:combinedbound_gaussian}
Let $\bm{B} \in \mathbb{R}^{n \times n}$ be symmetric with stable rank $\rho(\bm{B})$. Let $\tr_m(\bm{B})$ be the stochastic trace estimator with $N$ matrix-vector products with i.i.d. standard Gaussian random vectors. Let $c \in (0,\frac{1}{2})$ be arbitrary and define $C = -\frac{1}{c} - \frac{\log(1-2c)}{2c^2}$. Then, if $N \geq \frac{\log\left(\frac{2}{\delta}\right)}{c^2 C \rho(\bm{B})}$ we have that 
\begin{equation}
    |\tr_N(\bm{B})-\tr(\bm{B})| \leq 2 \sqrt{C} \sqrt{\frac{\log\left(\frac{2}{\delta}\right)}{N}}\|\bm{B}\|_F
\end{equation}
holds with probability at least $1-\delta$.
\end{lemma}
\end{comment}
Let
\begin{equation}\label{eq:definition_C}
 C(\varepsilon,\delta):= 4(1+\ell)\varepsilon^{-2}\log(2/\delta).
\end{equation}
By Lemma~\ref{lemma:combinedbound_gaussian}, for sufficiently small $\varepsilon$, $C(\varepsilon,\delta)\|\bm{B}\|_F^2$ samples
are sufficient to achieve
$
    |\tr_m(\bm{B})-\tr(\bm{B})| \leq \varepsilon
$
with probability at least $1-\delta$. In practice one cannot assume to know, or be able to compute, the stable rank appearing in the condition $m \geq \frac{4(1+\ell) \log(2/\delta)}{\ell^2\rho(\bm{B})}$. Since the stable rank is always larger than 1, requiring $m \geq \frac{4(1+\ell) \log(2/\delta)}{\ell^2}$ would be sufficient to ensure that $m \geq \frac{4(1+\ell) \log(2/\delta)}{\ell^2\rho(\bm{B})}$. However, in practice we set $\ell = 0$ and completely omit the side condition $m \geq \frac{4(1+\ell) \log(2/\delta)}{\ell^2\rho(\bm{B})}$. While not justified by Lemma~\ref{lemma:combinedbound_gaussian}, we observe no significant loss in the success probabilities of our algorithm, see Section \ref{section:synthetic_matrices}.

\subsubsection{Finding the minimum of $m(r)$}\label{sec:minimum}

Applying the results above to $\bm{B} = \bm{A}^{(r)}_{\text{rest}}$ implies that a suitable choice for the function $m(r)$ in~\eqref{eq:matvecs} is given by
\begin{equation}\label{eq:N}
    m(r) = 2r + C(\varepsilon,\delta) \|\bm{A}^{(r)}_{\text{rest}}\|_F^2.
\end{equation}

In the idealistic scenario that $\bm{Q}^{(r)}$ contains the dominant $r$ singular vectors, we have $\|\bm{A}^{(r)}_{\text{rest}}\|_F^2 = \sigma_{r+1}^2 + \cdots + \sigma_n^2$. This implies that the differences $m(r)-m(r-1) = 2 - C(\varepsilon,\delta) \sigma_{r+1}^2$ are monotonically increasing and switch sign at most once. In turn, $r^*$ is a global minimum whenever it is a local minimum, that is, $m(r^*\pm1) \ge m(r^*)$.
Since $\bm{Q}^{(r)}$ only approximates the space spanned by the dominant $r$ singular vectors of $\bm{A}$, these relations are not guaranteed to hold. In practice, we have observed $m(r^*\pm1) \ge m(r^*)$ to remain a reliable criterion; see Figure~\ref{fig:minimum} for an example.
\begin{figure}
    \centering
    \includegraphics[max size = {12cm}{5cm}]{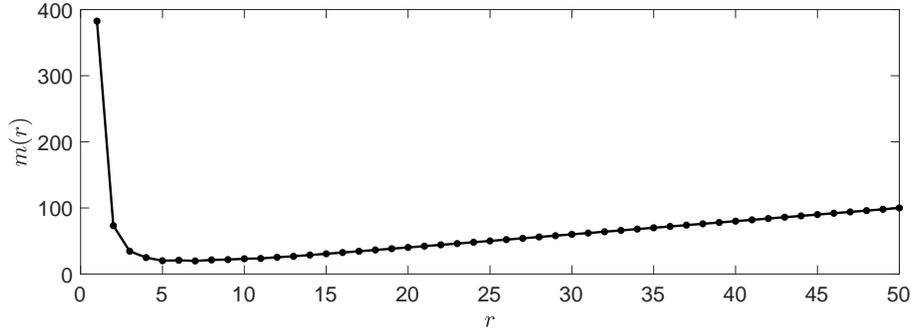}
    \caption{In this example we let $\bm{A} = \bm{U} \bm{\Lambda} \bm{U}^T \in \mathbb{R}^{1000 \times 1000}$ where $\bm{U}$ is a random orthogonal matrix and $\bm{\Lambda}$ is a diagonal matrix with $\bm{\Lambda}_{ii} = 1/i^2$. The x-axis shows the rank $r$, and the y-axis is the function $m(r)$ defined in \eqref{eq:N} with $\delta = 0.01$, $\varepsilon = 0.05\tr(\bm{A})$ and $\ell = 0$. The function has its minimum at $r^* = 7$.}
    \label{fig:minimum}
\end{figure}

Evaluating $m(r)$ involves the quantity $\|\bm{A}_{\text{rest}}^{(r)}\|_F^2$, which is too expensive to evaluate. Using the symmetry of $\bm{A}$ and the unitary invariance of the Frobenius norm we get
\begin{align}
   &\|\bm{A}^{(r)}_{\text{rest}}\|_F^2 = \|\bm{A}\|_F^2 + \|\bm{Q}^{(r)T}\bm{A}\bm{Q}^{(r)}\|_F^2 - 2 \|\bm{A}\bm{Q}^{(r)}\|_F^2\label{eq:inprob_expansion}.
\end{align}
In turn, $m(r)$ and the function
\begin{equation}\label{eq:tildeN}
    \tilde{m}(r) := 2r + C(\varepsilon,\delta)\big(\|\bm{Q}^{(r)T}\bm{A}\bm{Q}^{(r)}\|_F^2 - 2 \|\bm{A}\bm{Q}^{(r)}\|_F^2\big)
\end{equation}
have the same minimum. The latter can be cheaply computed by recursive updating, without any additional matrix-vector products with $\bm{A}$.

To summarize, we adapt the randomized SVD to build $\bm{Q}^{(r)}$ column-by-column, similar to as described in \cite[Section 4.4]{rsvd}, and stop the loop whenever a minimum of $\tilde{m}(r)$ is detected. By the heuristics discussed above, it is safe to stop at $r = r^*$ when $\tilde{m}(r^*) > \tilde{m}(r^*-1) > \tilde{m}(r^*-2)$.

\subsubsection{Estimating the Frobenius norm of the remainder}

Having found an approximate minimum $r^*$ of $\tilde{m}(r)$ and computed $\bm{Q} \equiv \bm{Q}^{(r^*)}$, it remains to apply stochastic trace estimation to $\bm{A}_{\text{rest}}\equiv \bm{A}^{(r^*)}_{\text{rest}}$.
By Lemma~\ref{lemma:combinedbound_gaussian} it suffices to use $M \ge C(\varepsilon,\delta)\|\bm{A}_{\text{rest}}\|_F^2$ samples. Because computing $\|\bm{A}_{\text{rest}}\|_F$ is too expensive, we need to resort (once more) to a stochastic estimator utilizing only matrix-vector products. The following result is essential for that purpose.
\begin{lemma}\label{lemma:frobenius_estimation} Let $\bm{\Omega} \in \mathbb{R}^{n \times k}$ be a standard Gaussian matrix and let $\bm{B} \in \mathbb{R}^{n \times n}$. For any $\alpha \in (0,1)$ it holds that
\begin{align*}
    \mathbb{P}\left(\frac{1}{k}\|\bm{B}\bm{\Omega}\|_F^2 < \alpha \|\bm{B}\|_F^2 \right) \leq \mathbb{P}(X < \alpha)=\frac{\gamma(k/2,\alpha k/2)}{\Gamma(k/2)},
\end{align*}
where $X \sim \Gamma(k/2,k/2)$ (gamma distribution with shape and rate parameter $k/2$),
$\gamma(s,x) := \int_0^x t^{s-1}e^{-t} dt$ is the lower incomplete gamma function and $\Gamma(s)$ is the standard gamma function.
\end{lemma}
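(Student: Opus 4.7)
My plan is to first reduce the statement, using the SVD of $\bm{B}$ and rotational invariance of the Gaussian distribution, to a stochastic comparison between a convex combination of iid $\Gamma(k/2, k/2)$ variables and a single copy, and then settle that comparison by a Schur-convexity / majorization argument. Concretely, I would write $\bm{B} = \bm{U}\bm{\Sigma}\bm{V}^T$ and note that $\bm{V}^T\bm{\Omega}$ is again an $n \times k$ standard Gaussian matrix. Denoting its rows by $\bm{g}_1, \ldots, \bm{g}_n \in \mathbb{R}^k$ (iid standard Gaussian vectors), I obtain
\[
\|\bm{B}\bm{\Omega}\|_F^2 \;=\; \|\bm{\Sigma}\bm{V}^T\bm{\Omega}\|_F^2 \;\stackrel{d}{=}\; \sum_{i=1}^n \sigma_i^2 \,\|\bm{g}_i\|_2^2.
\]
Setting $p_i := \sigma_i^2/\|\bm{B}\|_F^2$ (so $p_i \geq 0$ and $\sum_i p_i = 1$) and recalling that $\|\bm{g}_i\|_2^2/k \sim \Gamma(k/2, k/2)$, this rewrites as $\frac{1}{k}\|\bm{B}\bm{\Omega}\|_F^2/\|\bm{B}\|_F^2 \stackrel{d}{=} \sum_i p_i X_i$, where $X_1, \ldots, X_n$ are iid copies of $X \sim \Gamma(k/2, k/2)$. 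The right-hand equality in the lemma is just the CDF of $\Gamma(k/2, k/2)$ written via the regularized lower incomplete gamma function, so the problem reduces to showing $\mathbb{P}(\sum_i p_i X_i < \alpha) \leq \mathbb{P}(X < \alpha)$ for $\alpha \in (0,1)$.

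For this reduced claim, the intuition is that $\sum_i p_i X_i$ shares the mean $\mathbb{E}[X] = 1$ but has variance $(\sum_i p_i^2)\Var(X) \leq \Var(X)$, so it concentrates more tightly around $1$ and has a lighter lower tail below $\alpha < 1$. To formalize this, I would prove that $F(p_1, \ldots, p_n) := \mathbb{P}(\sum_i p_i X_i < \alpha)$ is Schur-convex in the probability vector $p$; since $(1, 0, \ldots, 0)$ majorizes every probability vector on $\{1,\ldots,n\}$, Schur-convexity then gives $F(p) \leq F(1, 0, \ldots, 0) = \mathbb{P}(X < \alpha)$. Symmetry of $F$ is clear, and the Schur-Ostrowski derivative condition $(p_i - p_j)(\partial_i F - \partial_j F) \geq 0$ can be verified from an integral representation of $F$ against the joint density of $(X_1, \ldots, X_n)$ by exploiting log-concavity of the $\Gamma(k/2, k/2)$ density. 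A more hands-on alternative is a pairwise mass-transfer argument: shifting weight from a smaller to a larger coordinate of $p$ weakly increases $F$, and iterating this eventually reduces to $(1, 0, \ldots, 0)$.

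The principal obstacle I expect is exactly this Schur-convexity / mass-transfer step. Jensen's inequality alone only yields the convex-order comparison $\mathbb{E}[\phi(\sum_i p_i X_i)] \leq \mathbb{E}[\phi(X)]$ for convex $\phi$, which does not translate into a pointwise CDF bound, and Chernoff/Markov-type estimates give only exponential upper bounds that do not match the exact regularized incomplete gamma CDF appearing in the statement. The proof therefore has to exploit the specific log-concave structure of the $\Gamma(k/2, k/2)$ density; the SVD reduction and the CDF identity are otherwise routine.
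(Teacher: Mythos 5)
Your reduction step is identical to the paper's: using the SVD (or the spectral decomposition; the paper cites~\cite{gratton} for this step but the computation is the same) you arrive at $\frac{1}{k}\|\bm{B\Omega}\|_F^2/\|\bm{B}\|_F^2 \stackrel{d}{=} \sum_i p_i X_i$ with $X_i$ i.i.d.\ $\Gamma(k/2,k/2)$, $p_i = \sigma_i^2/\|\bm{B}\|_F^2$, and the task becomes $\mathbb{P}(\sum_i p_i X_i < \alpha) \le \mathbb{P}(X_1 < \alpha)$ for $\alpha \in (0,1)$. Where you and the paper diverge is how this comparison is handled: the paper simply invokes~\cite[Theorem~2.2]{Roosta-K}, which is precisely this extremal-probability statement (a corner of the simplex maximizes the lower-tail probability below the common mean), whereas you propose to re-derive it via Schur-convexity and the Schur--Ostrowski criterion. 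That is not a different mathematical route so much as unpacking the black box the paper cites: the Roosta-Khorasani result itself rests on Schur-majorization arguments going back to Bock--Diaconis--Huffer--Perlman and Sz\'ekely--Bakirov.

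That said, there is a genuine gap in the sketched verification. You propose to establish the Schur--Ostrowski inequality ``by exploiting log-concavity of the $\Gamma(k/2,k/2)$ density,'' but this density is log-concave only when the shape parameter $k/2$ is at least $1$, i.e., $k\ge 2$; for $k=1$ the density $\propto x^{-1/2}e^{-x/2}$ is log-convex near the origin and the argument as stated does not go through, while the lemma (and the cited theorem) are claimed for all $k\ge 1$. Moreover, even for $k \ge 2$, log-concavity of the marginal density by itself is not enough to push through the Schur--Ostrowski verification for the lower-tail probability; the classical proofs exploit finer structure of the Gamma/chi-squared family (e.g., a total-positivity or complete-monotonicity property), and the threshold below which Schur-convexity holds is exactly what has to be controlled. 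So while the plan is pointed in the right direction and could in principle be made rigorous by reconstructing the Sz\'ekely--Bakirov argument, as written it is incomplete at the one step that carries the whole weight of the lemma, and it is more economical---and covers all $k\ge 1$---to cite the known extremal-probability theorem as the paper does.
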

\begin{proof} 
%By the unitary invariance of standard Gaussian vectors we may assume without loss of generality that $\bm{B} = \bm{\Sigma}$ where $\bm{\Sigma}$ is a diagonal entries containing the singular values. %Note that for a standard Gaussian vector $\bm{\omega}$ we have
%\begin{align*}
%    \|\bm{B}\bm{\omega}\|_F^2 = \sum\limits_{j=1}^n \sigma_j^2 Z_j
%\end{align*}
%where $Z_1,\cdots,Z_n$ are i.i.d. $\Gamma(1/2,1/2)$ random variables. 
It is well known that 
\begin{equation}\label{eq:chi2_sum}
    \frac{1}{k} \|\bm{B\Omega}\|_F^2 = \frac{1}{k}\sum\limits_{j=1}^n \sigma_j^2 Z_j,
\end{equation}
where $Z_j$, $j = 1,\ldots,n$, denote i.i.d. $\chi^2_k$ random variables; see, e.g., \cite[Section 2]{gratton}.
Setting $X_j := \frac{1}{k}Z_j \sim \Gamma(k/2,k/2)$ and $\lambda_j = \frac{\sigma_j^2}{\|\bm{B}\|_F^2}$ for $j = 1,\ldots,n$ we rewrite
\begin{equation}\label{eq:gamma_sum}
    \mathbb{P}\Big(\frac{1}{k} \|\bm{B}\bm{\Omega}\|_F^2 < \alpha \|\bm{B}\|_F^2 \Big) = \mathbb{P}\bigg(\sum\limits_{j=1}^n \lambda_j X_j < \alpha \bigg).
\end{equation}
By~\cite[Theorem 2.2]{Roosta-K} the right-hand side is bounded for every $\alpha \in (0,1)$ by $\mathbb{P}\left(X_1 < \alpha\right)$, which completes the proof. 
\end{proof}

Lemma~\ref{lemma:frobenius_estimation} states that if $\frac{\gamma(k/2,\alpha k/2)}{\Gamma(k/2)} \leq \delta$ then $\frac{1}{k\alpha}\|\bm{B}\bm{\Omega}\|_F^2 >  \|\bm{B}\|_F^2$ with probability at least $1-\delta$. Hence, using $M := \lceil C(\varepsilon,\delta) \cdot \frac{1}{k\alpha} \|\bm{A}_{\text{rest}}\bm{\Omega}\|_F^2 \rceil$ samples ensures an error of at most $\varepsilon$ with low failure probability. See Figure~\ref{fig:alpha} for the relationship between $k,\alpha$ and $\delta$.

\begin{figure}
    \centering
    \includegraphics[max size = {12cm}{5cm}]{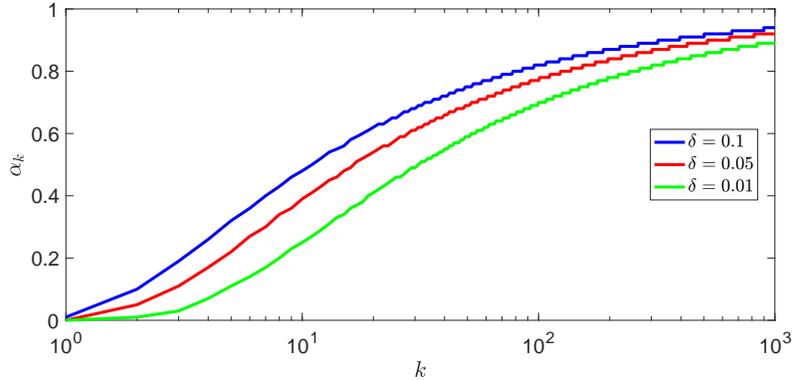}
    \caption{For different choices of $\delta$, this plot demonstrates the relationship between $k$ and the largest choice of $\alpha$ such that $\frac{\gamma(k/2,\alpha k/2)}{\Gamma(k/2)} \leq \delta$.}
    \label{fig:alpha}
\end{figure}

\subsubsection{A prototype algorithm}

Combining the results presented above we obtain the prototype algorithm presented in Algorithm~\ref{alg:adap_hpp}.  To reduce the number of passes over the matrix $\bm{A}$ the algorithm can be implemented in a block-wise fashion, which can in turn lead to a reduction  of wall-clock time. For block-size $b=1$ we use the heuristic stopping criteria for the low-rank approximation described above. For larger block-sizes it is sufficient to use $m(r^*-b) < m(r^*)$ as a stopping criteria.
\begin{algorithm}[H]
\caption{Prototype algorithm}
\label{alg:adap_hpp}
\textbf{input:} Symmetric $\bm{A} \in \mathbb{R}^{n \times n}$. Error tolerance $\varepsilon > 0$. Failure probability $\delta \in (0,1)$. Parameter $\ell > 0$. Block-size $b$.\\
\textbf{output:} An approximation to $\tr(\bm{A}):\tr_{\mathsf{adap}}(\bm{A})$.
\begin{algorithmic}[1]
\\ $\bm{Y}^{(b)} = \bm{A}\bm{\Omega}^{(b)}$ where $\bm{\Omega}^{(b)} \in \mathbb{R}^{n \times b}$ has i.i.d. $N(0,1)$ entries. \label{line:rsvdbegin}
\\ Obtain orthonormal basis $\widehat{\bm{Q}}^{(b)}$ for $\text{range}\left(\bm{Y}^{(b)}\right)$.
\\ $\bm{Q}^{(1)} = \widehat{\bm{Q}}^{(1)}$
\\ $\text{trest}_1 = \tr\left( \widehat{\bm{Q}}^{(1)T}\left(\bm{A} \widehat{\bm{Q}}^{(1)}\right)\right)$
\\ Compute $\tilde{m}(b)$.
\\ $r = b$
\While{A minimum of $\tilde{m}(r)$ not detected}
\\ \quad $\bm{Y}^{(r + b)} = \bm{A}\bm{\Omega}^{(r+b)}$ where $\bm{\Omega}^{(r+b)} \in \mathbb{R}^{n \times b}$ has i.i.d. $N(0,1)$ entries.
\\ \quad $\widetilde{\bm{Q}}^{(r+b)} = (\bm{I}-\bm{Q}^{(r)}\bm{Q}^{(r)T})\bm{Y}^{(r+b)}$
\\ \quad Obtain orthonormal basis $\widehat{\bm{Q}}^{(r+b)}$ for $\text{range}\left(\widetilde{\bm{Q}}^{(r+b)}\right)$.
\\ \quad $\bm{Q}^{(r+b)} = \begin{bmatrix} \bm{Q}^{(r)} & \widehat{\bm{Q}}^{(r+b)} \end{bmatrix}$
\\ \quad $\text{trest}_1 = \text{trest}_1 + \tr\left(\widehat{\bm{Q}}^{(r+b)T} \left(\bm{A} \widehat{\bm{Q}}^{(r+b)}\right) \right)$
\\ \quad Update $\tilde{m}(r+b)$ recursively.
\\ \quad $r = r+b$
\EndWhile
\\ Let $\bm{Q} = \bm{Q}^{(r)}$ and $\bm{A}_{\text{rest}} = (\bm{I}- \bm{Q}\bm{Q}^T)\bm{A}(\bm{I}-\bm{Q}\bm{Q}^T)$. \label{line:rsvdend} \Comment{$\bm{A}_{\text{rest}}$ is never formed explicitly.}
\\ Choose $(k,\alpha) \in \mathbb{N} \times (0,1)$ such that $\frac{\gamma\left(k/2,\alpha k/2\right)}{\Gamma\left(k/2\right)} \leq \delta$. \label{line:tuple}
\\ $M = \max\left\{\frac{4(1+\ell)\log(2/\delta)}{\ell^2},\lceil C(\varepsilon,\delta) \cdot \frac{1}{k\alpha} \|\bm{A}_{\text{rest}}\bm{\Psi} \|_F^2 \rceil\right\}$ where $\bm{\Psi} \in \mathbb{R}^{n \times k}$ is a standard Gaussian matrix \label{line:frobenius_estimation}.
\\ $\text{trest}_2 = \tr_M(\bm{A}_{\text{rest}})$ \label{line:trace_estimation}
\\ \textbf{return} $\tr_{\mathsf{adap}}(\bm{A}) = \text{trest}_1 + \text{trest}_2$
\end{algorithmic}
\end{algorithm} 
A simple probabilistic analysis yields the following result on the success probability of Algorithm~\ref{alg:adap_hpp}:
\begin{lemma}\label{lemma:success1}
The output of Algorithm~\ref{alg:adap_hpp} satisfies 
$|\tr_{\mathsf{adap}}(\bm{A}) - \tr(\bm{A})| \leq \varepsilon$ with probability at least $1-2\delta$. 
\end{lemma}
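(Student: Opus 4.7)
The plan is to reduce the overall error to the accuracy of the second-phase stochastic trace estimator, control the failure of the Frobenius norm estimate that determines $M$, and then apply Lemma~\ref{lemma:combinedbound_gaussian} conditionally, closing with a union bound.

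Since $\bm{Q}$ has orthonormal columns, the cyclic property of the trace yields
\[
\tr(\bm{A}) = \tr(\bm{Q}^T\bm{A}\bm{Q}) + \tr(\bm{A}_{\text{rest}}) = \text{trest}_1 + \tr(\bm{A}_{\text{rest}}),
\]
and because $\text{trest}_1$ is computed exactly by the algorithm, the error collapses to $|\tr_{\mathsf{adap}}(\bm{A}) - \tr(\bm{A})| = |\text{trest}_2 - \tr(\bm{A}_{\text{rest}})|$, reducing the problem to bounding the stochastic trace estimation error on $\bm{A}_{\text{rest}}$.

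Next, introduce the event $E_1 := \{\|\bm{A}_{\text{rest}}\|_F^2 \le \tfrac{1}{k\alpha}\|\bm{A}_{\text{rest}}\bm{\Psi}\|_F^2\}$. Since $\bm{\Psi}$ is sampled independently of $\bm{Q}$, conditioning on $\bm{Q}$ fixes $\bm{A}_{\text{rest}}$, and Lemma~\ref{lemma:frobenius_estimation} applied to $\bm{B}=\bm{A}_{\text{rest}}$ combined with the choice of $(k,\alpha)$ in line~\ref{line:tuple} gives $\mathbb{P}(E_1^c)\le\delta$. On $E_1$, line~\ref{line:frobenius_estimation} guarantees
\[
M \ge C(\varepsilon,\delta)\,\tfrac{1}{k\alpha}\|\bm{A}_{\text{rest}}\bm{\Psi}\|_F^2 \ge C(\varepsilon,\delta)\,\|\bm{A}_{\text{rest}}\|_F^2 = 4(1+\ell)\varepsilon^{-2}\log(2/\delta)\|\bm{A}_{\text{rest}}\|_F^2,
\]
while unconditionally the $\max$ in line~\ref{line:frobenius_estimation} enforces $M \ge 4(1+\ell)\log(2/\delta)/\ell^2 \ge 4(1+\ell)\log(2/\delta)/(\ell^2\rho(\bm{A}_{\text{rest}}))$, since $\rho(\bm{A}_{\text{rest}})\ge 1$; this is precisely the side condition required by Lemma~\ref{lemma:combinedbound_gaussian}.

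Finally, the Gaussian vectors used inside $\tr_M(\bm{A}_{\text{rest}})$ in line~\ref{line:trace_estimation} are drawn independently of both $\bm{Q}$ and $\bm{\Psi}$, so after conditioning on $(\bm{Q},\bm{\Psi})$ the pair $(\bm{A}_{\text{rest}},M)$ becomes deterministic while the estimator's samples remain i.i.d.\ standard Gaussian. Lemma~\ref{lemma:combinedbound_gaussian} applied with $\bm{B}=\bm{A}_{\text{rest}}$ and $m=M$ therefore yields, on $E_1$ and with conditional probability at least $1-\delta$,
\[
|\text{trest}_2 - \tr(\bm{A}_{\text{rest}})| \le 2\sqrt{1+\ell}\sqrt{\tfrac{\log(2/\delta)}{M}}\,\|\bm{A}_{\text{rest}}\|_F \le \varepsilon,
\]
and a union bound over $E_1^c$ and this conditional failure event produces a total failure probability of at most $2\delta$. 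The main obstacle is the measurability/independence bookkeeping around the random sample count $M$: conditioning on $\bm{\Psi}$ is what allows Lemma~\ref{lemma:combinedbound_gaussian}, whose statement is for a fixed $m$, to be invoked with the data-dependent integer $M$ without losing the i.i.d.\ structure of the samples in $\tr_M$.
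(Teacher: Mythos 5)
Your proof is correct and takes essentially the same approach as the paper's: condition on $\bm{Q}$ (and $\bm{\Psi}$) so that $\bm{A}_{\text{rest}}$ and $M$ become deterministic, invoke Lemma~\ref{lemma:frobenius_estimation} to control the Frobenius-norm overestimate with failure probability $\delta$, invoke Lemma~\ref{lemma:combinedbound_gaussian} for the conditional trace-estimation error with failure probability $\delta$ (using that the $\max$ in line~\ref{line:frobenius_estimation} enforces the side condition since $\rho(\bm{A}_{\text{rest}})\ge1$), and combine via a union bound. The paper phrases the key independence step as an explicit decomposition $\mathbb{P}(S_M)=\sum_N \mathbb{P}(S_N)\mathbb{P}(M=N)$ with $S_N$ independent of $\{M=N\}$, whereas you phrase it by conditioning on $(\bm{Q},\bm{\Psi})$; these are equivalent ways of handling the random sample count $M$.
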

\begin{proof}
For the moment, let us consider $\bm{Q}$ fixed and, hence, $\bm{A}_{\text{rest}}$ deterministic.
For a fixed arbitrary integer $N$ let us consider the event
\begin{equation*}
    S_N := \left\{|\tr_N(\bm{A}_{\text{rest}}) - \tr(\bm{A}_{\text{rest}})| \leq \varepsilon \right\}.
\end{equation*}
Let $M$ be the random variable defined in line~\ref{line:frobenius_estimation} of Algorithm~\ref{alg:adap_hpp}. Therefore, $S_M$ is the event that the estimate of $\tr(\bm{A}_{\text{rest}})$ from Algorithm~\ref{alg:adap_hpp} has an error at most $\varepsilon$. That is,
\begin{equation*}
    S_M = \left\{|\tr_M(\bm{A}_{\text{rest}}) - \tr(\bm{A}_{\text{rest}})| \leq \varepsilon \right\} = \bigcup\limits_{N \geq 1} \left[S_N \cap \{M = N\}\right]
\end{equation*}
The analysis of $\mathbb{P}(S_M)$ is complicated by the fact that the integer $M$ defined in line~\ref{line:frobenius_estimation} of Algorithm~\ref{alg:adap_hpp} is also random.
Letting \[
M_1 := \max\left\{\frac{4(1+\ell)\log(2/\delta)}{\ell^2\rho(\bm{A}_{\text{rest}})},C(\varepsilon,\delta)\|\bm{A}_{\text{rest}}\|_F^2\right\},
    \]
we know from Lemma~\ref{lemma:frobenius_estimation} that $\mathbb{P}(M \geq M_1) \geq 1-\delta$ and from~\eqref{eq:definition_C} that $\mathbb{P}(S_N) \geq 1-\delta$ for $N \geq M_1$. Moreover, it is important to remark that the events $S_N$ and $M=N$ are independent. In particular, this implies $\mathbb{P}(S_M|M = N) = \mathbb{P}(S_N)$. Combining these observations yields

\begin{eqnarray*}
    \mathbb{P}(S_M) & \geq & \mathbb{P}( S_M  \cap \{M \ge M_1 \} ) \\
   & =&\sum\limits_{N \geq M_1} \mathbb{P}(S_{M} \cap \{M = N\}) 
     = \sum\limits_{N \geq M_1} \mathbb{P}(S_{M} | M = N)\mathbb{P}(M=N)\\
    & = & \sum\limits_{N \geq M_1} \mathbb{P}(S_N)\mathbb{P}(M = N) 
    \geq (1-\delta)\sum\limits_{N \geq M_1} \mathbb{P}(M = N) \\
    &= &(1-\delta)\mathbb{P}(M \geq M_1) \geq (1-\delta)^2 \geq 1-2\delta,
\end{eqnarray*}
which holds independently of $\bm{Q}$ and thus completes the proof. 
\end{proof}

\subsection{A-Hutch++}\label{section:heuristic}

To turn Algorithm~\ref{alg:adap_hpp} into a practical method, we need to address the choice of the pair $(k,\alpha)$ in line~\ref{line:tuple} and apply further modification to increase its efficiency by reusing the matrix vector products in the Frobenius norm estimation in line \ref{line:frobenius_estimation} in the trace estimation in line \ref{line:trace_estimation} of Algorithm \ref{alg:adap_hpp}. 
%In this section we will describe heuristic version of Algorithm~\ref{alg:adap_hpp} that reuses the matrix-vector products from the Frobenius norm estimation in line~\ref{line:frobenius_estimation} in the stochastic trace estimation in line~\ref{line:trace_estimation} of Algorithm~\ref{alg:adap_hpp}, while also choosing the tuple $(k,\alpha)$ adaptively.

For fixed $k$, it makes sense to choose $\alpha$ as large as possible because $M$ decreases with increasing $\alpha$; see line~\ref{line:frobenius_estimation}. Thus, we set
\begin{equation}\label{eq:alpha_k}
    \alpha_k := \sup\left\{\alpha \in (0,1): \frac{\gamma\left(k/2,\alpha k/2\right)}{\Gamma\left(k/2\right)} \leq \delta\right\}.
\end{equation}
%In fact, the sequence $\{\alpha_k\}_{k \in \mathbb{N}}$ is monotonic:
\begin{lemma}\label{lemma:monotonicity}
The sequence $\{\alpha_k\}_{k \in \mathbb{N}}$ defined by~\eqref{eq:alpha_k} increases monotonically and converges to $1$.
\end{lemma}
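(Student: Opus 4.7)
The strategy is to handle the two assertions—convergence and monotonicity—separately, reducing both to properties of $F_k(\alpha) := \gamma(k/2,\alpha k/2)/\Gamma(k/2) = \mathbb{P}(X_k \le \alpha)$, where $X_k \sim \Gamma(k/2,k/2)$. Since $F_k$ is continuous and strictly increasing on $(0,\infty)$, the supremum in~\eqref{eq:alpha_k} is attained: either $\alpha_k = 1$, or $\alpha_k$ is the unique point of $(0,1)$ satisfying $F_k(\alpha_k)=\delta$.

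For the convergence $\alpha_k \to 1$, I would invoke a standard Chernoff bound. A brief moment-generating-function computation, optimizing over $t>0$ in $\mathbb{P}(X_k\le\alpha) \le e^{t\alpha k/2}\mathbb{E}[e^{-tX_k k/2}]$, gives
\[
F_k(\alpha) \le \bigl(\alpha\, e^{1-\alpha}\bigr)^{k/2},\qquad \alpha\in(0,1),
\]
with base strictly less than one by the elementary inequality $\alpha < e^{\alpha-1}$. Hence $F_k(\alpha)\to 0$ for every fixed $\alpha < 1$, so eventually $F_k(\alpha)<\delta$, which forces $\alpha_k \ge \alpha$. Letting $\alpha \uparrow 1$ and combining with $\alpha_k \le 1$ yields $\alpha_k\to 1$.

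For monotonicity, it suffices to prove the pointwise inequality $F_{k+1}(\alpha)\le F_k(\alpha)$ for every $\alpha\in(0,1)$, because evaluating at $\alpha=\alpha_k$ then gives $F_{k+1}(\alpha_k)\le \delta$ and hence $\alpha_{k+1}\ge \alpha_k$. My plan is to treat $a:=k/2$ as a continuous parameter and verify $\frac{d}{da} P(a,\alpha a)\le 0$ on $a>0$, $\alpha\in(0,1)$, where $P(a,x):=\gamma(a,x)/\Gamma(a)$. Using $\partial_a\gamma(a,x)=\int_0^x(\log t)\,t^{a-1}e^{-t}\,dt$ and $\Gamma'(a)=\Gamma(a)\psi(a)$ together with the chain rule, direct differentiation yields
\[
\frac{d}{da}P(a,\alpha a) = \frac{1}{\Gamma(a)}\int_0^{\alpha a}\!\bigl(\log t - \psi(a)\bigr)\,t^{a-1}e^{-t}\,dt + \frac{(\alpha a)^{a} e^{-\alpha a}}{a\,\Gamma(a)}.
\]
The integral term is negative: by the identity $\mathbb{E}[\log T]=\psi(a)$ for $T\sim \Gamma(a,1)$, it equals $\gamma(a,\alpha a)\cdot\bigl(\mathbb{E}[\log T\mid T\le \alpha a]-\psi(a)\bigr)$, and conditioning on $T\le \alpha a < a$ restricts $T$ to a lower range for $\alpha<1$. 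The boundary term is positive.

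The main obstacle is controlling the positive boundary term against the negative integral. I would handle this by an integration by parts rooted in the identity $(\alpha a)^{a} e^{-\alpha a} = a\,\gamma(a,\alpha a)-\gamma(a+1,\alpha a)$, which reabsorbs the boundary contribution into the integral and reduces the question to a pointwise sign analysis of a single integrand. This sign step is the technical core of the argument, whereas the convergence assertion is essentially an application of concentration.
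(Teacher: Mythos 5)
Your reduction of both halves of the lemma to properties of the CDF $F_k(\alpha) = \mathbb{P}(X_k \le \alpha)$ with $X_k \sim \Gamma(k/2,k/2)$ is exactly the right frame, and your treatment of convergence is clean — indeed slightly cleaner than the paper's, which establishes $p_k(\alpha) \to 0$ via the law of large numbers and then invokes monotonicity in $k$ to make the convergence eventual, whereas your Chernoff bound $F_k(\alpha) \le (\alpha e^{1-\alpha})^{k/2}$ is explicit, self-contained, and independent of the monotonicity claim.

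The issue is the monotonicity half. Like the paper, you correctly reduce it to the pointwise inequality $F_{k+1}(\alpha) \le F_k(\alpha)$ for $\alpha \in (0,1)$ and then pass to $\alpha_{k+1} \ge \alpha_k$ via $F_{k+1}(\alpha_k) \le F_k(\alpha_k) = \delta$. But where the paper takes this pointwise inequality as a known black box (it cites Theorem~2.1 of Roosta-Khorasani, Sz\'ekely, and Ascher), you attempt to prove it from scratch by showing $\frac{d}{da}P(a,\alpha a) \le 0$. Your derivative formula and the identity $(\alpha a)^a e^{-\alpha a} = a\gamma(a,\alpha a) - \gamma(a+1,\alpha a)$ are both correct, and the observation that the integral term is nonpositive (the conditional expectation $\mathbb{E}[\log T \mid T \le \alpha a]$ lies below $\psi(a)$) is sound. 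However, the sign of the integral alone does not give you the sign of the derivative — the positive boundary term must be dominated, and your proposal explicitly defers this to an unexecuted ``pointwise sign analysis of a single integrand,'' which you yourself call ``the technical core.'' That is a genuine gap: the lemma is not proved until that step is carried out, and the domination is in fact the nontrivial content of the cited Roosta-Khorasani result. To close the argument you should either complete the sign analysis you sketch, or simply cite \cite[Theorem~2.1]{Roosta-K} for the inequality $F_{k+1}(\alpha) \le F_k(\alpha)$, as the paper does.
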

\begin{proof}
Letting $X  := \frac{1}{k} \sum\limits_{i=1}^k X_i \sim \Gamma(k/2,k/2)$ for i.i.d. $\chi_1^2$ random variables $X_i$, we set
%Let $\{X_i\}_{i \in \mathbb{N}}$ be a sequence of i.i.d. $\chi_1^2$ random variables and let $\bar{X}_k := \frac{1}{k} \sum\limits_{i=1}^k X_i$ be the $k$:th sample mean. Then, $\bar{X}_k \sim \Gamma(k/2,k/2)$. Thus, for $\alpha \in (0,1]$
\begin{equation*}
    p_k(\alpha) := \mathbb{P}\left( X \leq \alpha \right) = \frac{\gamma\left(k/2,\alpha k/2\right)}{\Gamma\left(k/2\right)}.
\end{equation*}
By~\cite[Theorem 2.1]{Roosta-K} $p_{k+1}(\alpha) \leq p_k(\alpha)$ for every $\alpha \in (0,1]$. Furthermore, by continuity of $p_k$ in $\alpha$ and monotonicity of $p_k(\alpha)$ in $k$ we have
\begin{equation*}
    \delta = p_k(\alpha_k) = p_{k+1}(\alpha_{k+1}) \leq p_{k}(\alpha_{k+1}).
\end{equation*}
Thus, by monotonicity of $p_k$ in $\alpha$ we have $\alpha_k \leq \alpha_{k+1}$, which proves the monotonicity of the sequence $\{\alpha_k\}_{k \in \mathbb{N}}$.

To show $\alpha_k \rightarrow 1$ as $k \rightarrow +\infty$, let $\alpha_\varepsilon := 1-\varepsilon > 0$ for fixed arbitrary $0 < \varepsilon <1$.
%be given and choose $\alpha_{\varepsilon} \in (0,1]$ such that $1-\alpha_{\varepsilon} < \varepsilon$. 
By the law of large numbers, $p_k(\alpha_{\varepsilon}) \rightarrow 0$ and by the argument above this convergence is monotonic. Let $k_{\varepsilon,\delta} = \min\{k \in \mathbb{N} : p_k(\alpha_{\varepsilon}) \leq \delta \}$. Let $k \geq k_{\varepsilon,\delta}$. Then, $\delta \geq p_{k_{\varepsilon,\delta}}(\alpha_{\varepsilon}) \geq p_k(\alpha_{\varepsilon})$. Thus, for all $k \geq k_{\varepsilon,\delta}$ we have $1 \geq \alpha_k \geq \alpha_{\varepsilon} \geq 1-\varepsilon$, as required. 
\end{proof}

Furthermore, define the following random sequence $M_k$:
\begin{equation}\label{eq:M_k}
    M_k := C(\varepsilon,\delta) \cdot \frac{1}{k\alpha_k} \|\bm{A}_{\text{rest}}\bm{\Psi}^{(k)}\|_F^2, \quad \bm{\Psi}^{(k)} = \begin{bmatrix} \bm{\Psi}^{(k-1)} & \bm{\psi}^{(k)} \end{bmatrix}, \quad \bm{\psi}^{(k)} \sim N(\bm{0},\bm{I}).
\end{equation}
By the law of large numbers we have $M_k \rightarrow C(\varepsilon,\delta) \|\bm{A}_{\text{rest}}\|_F^2$ almost surely as $k \rightarrow +\infty$. If we reuse the matrix vector products from line \ref{line:frobenius_estimation} in line \ref{line:trace_estimation} the total number of performed matrix vector products in the second phase of Algorithm \ref{alg:adap_hpp} is
\begin{equation}\label{eq:reuse_matvecs}
    \max\left\{k, \lceil M_k \rceil \right\}.
\end{equation}
Because of the monotonicity of $\alpha_k$, and as seen in Figure~\ref{fig:adaptive_M}, $M_k$ is expected to decrease in $k$. Hence, in order to minimize \eqref{eq:reuse_matvecs} we choose $k$ such that $k = \lceil M_k \rceil$. Thus, we evaluate $M_k$ inside a while loop and stop the while loop once we detect $k > M_k$ for the first time. At this point we reuse the computation $\bm{A}_{\text{rest}}\bm{\Psi}^{(k)}$ to estimate $\tr(\bm{A}_{\text{rest}})$. The resulting algorithm is presented in Algorithm~\ref{alg:adap_hpp_heuristic}. As with the prototype algorithm, Algorithm~\ref{alg:adap_hpp_heuristic} can also be implemented to perform matrix-vector products in a block-wise fashion.

\begin{figure}
    \centering
    \includegraphics[width=\textwidth]{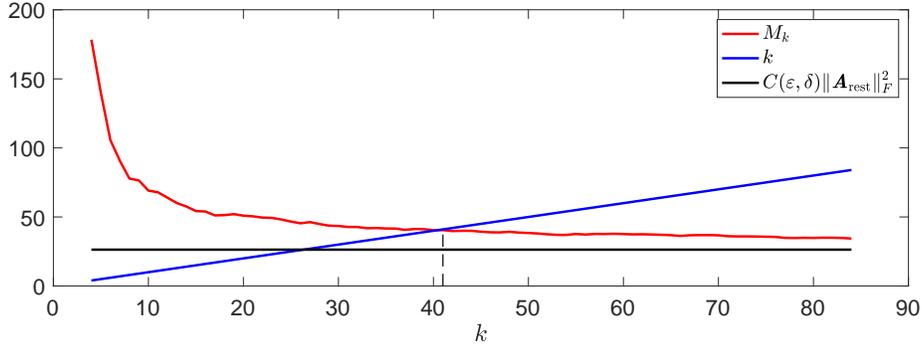}
    \caption{In this example we let $\bm{A} = \bm{U} \bm{\Lambda} \bm{U}^T \in \mathbb{R}^{1000 \times 1000}$ where $\bm{U}$ is a random orthogonal matrix and $\bm{\Lambda}$ is a diagonal matrix with $\bm{\Lambda}_{ii} = 1/i^{1.5}$. We run Algorithm \ref{alg:adap_hpp} with $\varepsilon = 0.01\tr(\bm{A})$, $\delta = 0.05$ and $\ell = 0$ to obtain $\bm{A}_{\text{rest}}$ defined in line~\ref{line:rsvdend}. The x-axis shows the number of matrix-vector products with $\bm{A}_{\text{rest}}$. The red line shows the evolution of the sequence $M_k$ defined in~\eqref{eq:M_k}, the blue line shows the linear line $k$ against $k$ and the black line is the number of matrix-vector products with $\bm{A}_{\text{rest}}$ to guarantee an error less than $\varepsilon$ with probability at least $1-\delta$. We stop the while loop in Algorithm~\ref{alg:adap_hpp_heuristic} once the red and blue line cross.}
    \label{fig:adaptive_M}
\end{figure}

\begin{algorithm}[H]
\caption{A-Hutch++}
\label{alg:adap_hpp_heuristic}
\textbf{input:} Symmetric $\bm{A} \in \mathbb{R}^{n \times n}$. Error tolerance $\varepsilon > 0$. Failure probability $\delta \in (0,1)$. Block-size $b$.\\
\textbf{output:} An approximation to $\tr(\bm{A}):\tr_{\mathsf{adap}}(\bm{A})$.
\begin{algorithmic}[1]
\State Perform lines~\ref{line:rsvdbegin}--\ref{line:rsvdend} in Algorithm~\ref{alg:adap_hpp} to get $\bm{Q}$, $\text{trest}_1$ and $\bm{A}_{\text{rest}}$.
    \State Initialize $\bm{\Psi}^{(0)} = \begin{bmatrix} \quad \end{bmatrix}$ and $\bm{C}^{(0)} = \begin{bmatrix} \quad \end{bmatrix}$.
    \State Initialize $M_0 = \infty$ and $k = 0$.
    \While{$M_k > k$}
        \State $k = k + b$
        \State $\alpha_{k} = \sup\left\{\alpha \in (0,1) : \frac{\gamma\left(\frac{k}{2},\alpha \frac{k}{2}\right)}{\Gamma\left(\frac{k}{2}\right)} \leq \delta\right\}$
        \State Generate a random matrix $\widehat{\bm{\Psi}}^{(k)}\in \mathbb{R}^{n \times b}$ and append $\bm{\Psi}^{(k)} = \begin{bmatrix} \bm{\Psi}^{(k-b)} & \widehat{\bm{\Psi}}^{(k)}\end{bmatrix}$.
        \State Compute $\widehat{\bm{C}}^{(k)}= \bm{A}_{\text{rest}}\widehat{\bm{\Psi}}^{(k)}$ and append $\bm{C}^{(k)} = \begin{bmatrix} \bm{C}^{(k-b)} & \widehat{\bm{C}}^{(k)} \end{bmatrix}$.
        \State Over-estimate $\|\bm{A}_{\text{rest}}\|_F^2$ with $\text{estFrob}_k = \frac{1}{k\alpha_k} \|\bm{C}^{(k)}\|_F^2$.
        \State Define $M_k = C(\varepsilon,\delta) \text{estFrob}_k$.
    \EndWhile
    \State \textbf{return} $\tr_{\mathsf{adap}}(\bm{A}) = \text{trest}_1 + \frac{1}{k} \text{tr}(\bm{\Psi}^{(k)T}\bm{C}^{(k)})$
\end{algorithmic}
\end{algorithm}

Due to the lack of independence between the Frobenius norm estimation and the stochastic trace estimation, the proof of Lemma~\ref{lemma:success1} does not extend to Algorithm~\ref{alg:adap_hpp_heuristic}. In turn, this algorithm does not come with the same type of success guarantee. However, as presented in Section \ref{section:synthetic_matrices} the empirical failure probabilities remain well below the prescribed failure probability.

\subsection{Numerical experiments}\label{sec:exp-ahpp}
%In this section we present numerical results for Algorithm~\ref{alg:adap_hpp_heuristic}. 
%All numerical experiments in this paper have been performed in Matlab, version \textcolor{blue}{add}; our implementation of Algorithm~\ref{alg:adap_hpp_heuristic} is available at \textcolor{blue}{\url{todo}} together with the scripts to reproduce all figures and tables in this paper.

All numerical experiments in this paper have been performed in Matlab, version R2020a; our implementation of Algorithm~\ref{alg:adap_hpp_heuristic} is available at \url{https://github.com/davpersson/A-Hutch-} together with the scripts to reproduce all figures and tables in this paper.

For a variety of matrices from~\cite{golubtrace,frommer,hpp,saibabaipsen}, we compare the newly proposed A-Hutch++ algorithm with Hutch++. In A-Hutch++ we fix $\delta = 0.05$ in all our experiments and we let $\varepsilon = \frac{|\tr(\bm{A})|}{2^p}$ for $p = 2,3,\ldots,10$, except in Figure~\ref{fig:TC} where we let $p = 3,4,\ldots,11$. The error of the estimate produced by A-Hutch++ implemented in a block-wise fashion is essentially identical to the unblocked version of A-Hutch++, i.e. $b = 1$, as long as the block-size is small compared to the number of required matrix-vector products. Therefore, for simplicity, we set the block-size to $b=1$ in all experiments. Furthermore, as discussed in Section \ref{sec:analysis_ahpp} we set $\ell = 0$ and omit the side condition on $m$.
For each considered matrix, for each value of $\varepsilon$, we first run Algorithm~\ref{alg:adap_hpp_heuristic} and count the number of matrix-vector products that have been used to obtain the estimate, then we run Algorithm~\ref{alg:hpp} with the same number of matrix-vector products.
%We use the same matrices as in the numerical experiments in~\cite{hpp}, as well as some other that have been used for numerical experiments in the literature. For a given $\varepsilon$ and $\delta$, we run Algorithm~\ref{alg:adap_hpp_heuristic}. Algorithm~\ref{alg:adap_hpp_heuristic} will output the an estimation of $\tr(\bm{A})$, as well as the number of matrix-vector products required to obtain the estimate. To compare the with Algorithm~\ref{alg:hpp} we run Algorithm~\ref{alg:hpp} with the same number matrix-vector products.
For each value of $\varepsilon$ we repeat this 100 times and plot the average relative error on the y-axis and the average required matrix-vector products on the x-axis.
%In all cases we let $\delta = 0.05$, $\varepsilon = \frac{|\tr(\bm{A})|}{2^p}$ for $p = 1,2,\cdots,9$. 
In each figure, the blue line is the average relative error from A-Hutch++, the red line is the average relative error from Hutch++, with the same number of matrix vector products, and the black dashed line is the $\varepsilon$ that was used as the input tolerance of A-Hutch++. For matrices with slow eigenvalue decay we have also included the average relative error from the Hutchinson estimator \eqref{eq:hutchinson}, see the green line in Figures~\ref{fig:algebraic_c=01},\ref{fig:algebraic_c=05},\ref{fig:TC}, and \ref{fig:tridiaginv}. The shaded blue area shows the $10^{\text{th}}$ to $90^{\text{th}}$ percentiles\footnote{We show the $90\%$ percentile because, if we did not reuse the matrix-vector products of the Frobenius norm estimation for the Hutchinson trace estimator, Lemma~\ref{lemma:success1} would ensure a failure probability of at most $2\delta=10\%$.} of the results from A-Hutch++, and the shaded red area shows the $10^{\text{th}}$ to $90^{\text{th}}$ percentiles of the results from Hutch++, see e.g. Figure \ref{fig:algebraic}. 

In the numerical experiments we observe that A-Hutch++ performs better compared to Hutch++ for matrices with slower singular value decay; see e.g. Figure \ref{fig:algebraic_c=01}, in which A-Hutch++ achieves an average relative error of 0.001827 using an average of 74.41 matrix-vector products ($6^{\text{th}}$ blue point in the figure). In comparison, Hutch++ achieves an average relative error of 0.001804 using an average of 237.7 matrix-vector products ($7^{\text{th}}$ red point in the figure). Hence, in these cases the adaptivity does improve the performance compared to Hutch++. For faster singular value decay the two algorithms perform similarly. However, in no case does Hutch++ perform noticeably better compared to A-Hutch++.

% \textcolor{blue}{This needs to be put at the end of the section? not sure -- nevermind, added to section 2.3.1 -- 
% We conclude that Algorithm~\ref{alg:hpp} and~\ref{alg:adap_hpp_heuristic} perform similarly for matrices with fast singular value decay. However, for matrices with slow singular value decay Algorithm~\ref{alg:adap_hpp_heuristic} outperforms Algorithm~\ref{alg:hpp}. }

\subsubsection{Synthetic matrices}\label{section:synthetic_matrices}
We create matrices with algebraically decaying singular values as in~\cite{hpp}, i.e. $\bm{A} = \bm{U}\bm{\Lambda} \bm{U}^T \in \mathbb{R}^{5000 \times 5000}$ where $\bm{U}$ is a random orthogonal matrix and $\bm{\Lambda}$ is a diagonal matrix with $\bm{\Lambda}_{ii} = 1/i^c $ for $i = 1, \ldots, 5000$, for a parameter $c \in \{0.1, 0.5, 1, 3\}$. The results are shown in Figure~\ref{fig:algebraic}.

Furthermore, using these example matrices we also estimated the failure probability of A-Hutch++. Table \ref{table:failure_probabilities} demonstrates the empirical failure probabilities from 100000 repeats of A-Hutch++ for different input pairs $(\varepsilon,\delta)$. In all cases the empirical failure probabilities remain well below the prescribed failure probability. 

In addition, to demonstrate that A-Hutch++ allocates more matrix-vector products to the Hutchinson estimator for matrices with slow eigenvalue decay and vice versa for matrices with fast eigenvalue decay, we also include a table displaying the distribution of the matrix-vector products between the two phases. See Table~\ref{table:matvecs_dist}.

\begin{table}[htp]
    \centering
    \begin{subtable}[h]{0.45\textwidth}
        \begin{tabular}{|l||*{3}{c|}}\hline
            \backslashbox[21mm]{$\varepsilon$}{$\delta$}
            &\makebox[3em]{$0.1$}&\makebox[3em]{$0.05$}&\makebox[3em]{$0.01$}\\\hline\hline
            $0.1\tr(\bm{A})$ &0&0&0\\\hline
            $0.01\tr(\bm{A})$ &0.00285&0.00076&0.00005\\\hline
            $0.005\tr(\bm{A})$ &0.00686&0.00244&0.00015\\\hline
        \end{tabular}
        \caption{$c=0.1$}
    \end{subtable}
    %\hfill
    
    \bigskip
    \begin{subtable}[h]{0.45\textwidth}
        \centering
        \begin{tabular}{|l||*{3}{c|}}\hline
            \backslashbox[21mm]{$\varepsilon$}{$\delta$}
            &\makebox[3em]{$0.1$}&\makebox[3em]{$0.05$}&\makebox[3em]{$0.01$}\\\hline\hline
            $0.1\tr(\bm{A})$ &0&0&0\\\hline
            $0.01\tr(\bm{A})$ &0.00484&0.00126&0.00010\\\hline
            $0.005\tr(\bm{A})$ &0.00855&0.00331&0.00032\\\hline
        \end{tabular}
        \caption{$c=0.5$}
    \end{subtable}
    
    \bigskip
    \begin{subtable}[h]{0.45\textwidth}
        \centering
        \begin{tabular}{|l||*{3}{c|}}\hline
            \backslashbox[21mm]{$\varepsilon$}{$\delta$}
            &\makebox[3em]{$0.1$}&\makebox[3em]{$0.05$}&\makebox[3em]{$0.01$}\\\hline\hline
            $0.1\tr(\bm{A})$ &0.00026&0.00002&0\\\hline
            $0.01\tr(\bm{A})$ &0.00607&0.00186&0.00018\\\hline
            $0.005\tr(\bm{A})$ &0.00804&0.00250&0.00030\\\hline
        \end{tabular}
        \caption{$c=1$}
    \end{subtable}
    
    \bigskip
    \begin{subtable}[h]{0.45\textwidth}
        \centering
        \begin{tabular}{|l||*{3}{c|}}\hline
            \backslashbox[21mm]{$\varepsilon$}{$\delta$}
            &\makebox[3em]{$0.1$}&\makebox[3em]{$0.05$}&\makebox[3em]{$0.01$}\\\hline\hline
            $0.1\tr(\bm{A})$ &0&0&0\\\hline
            $0.01\tr(\bm{A})$ &0.00002&0&0\\\hline
            $0.005\tr(\bm{A})$ &0.00006&0&0\\\hline
        \end{tabular}
        \caption{$c=3$}
    \end{subtable}
    \caption{Empirical failure probabilities from 100000 repeats of applying A-Hutch++ on the synthetic matrices described in Section \ref{section:synthetic_matrices}.}
    \label{table:failure_probabilities}
\end{table}

\begin{figure}
\begin{subfigure}{.5\textwidth}
  \centering
  % include first image
  \includegraphics[width=\linewidth]{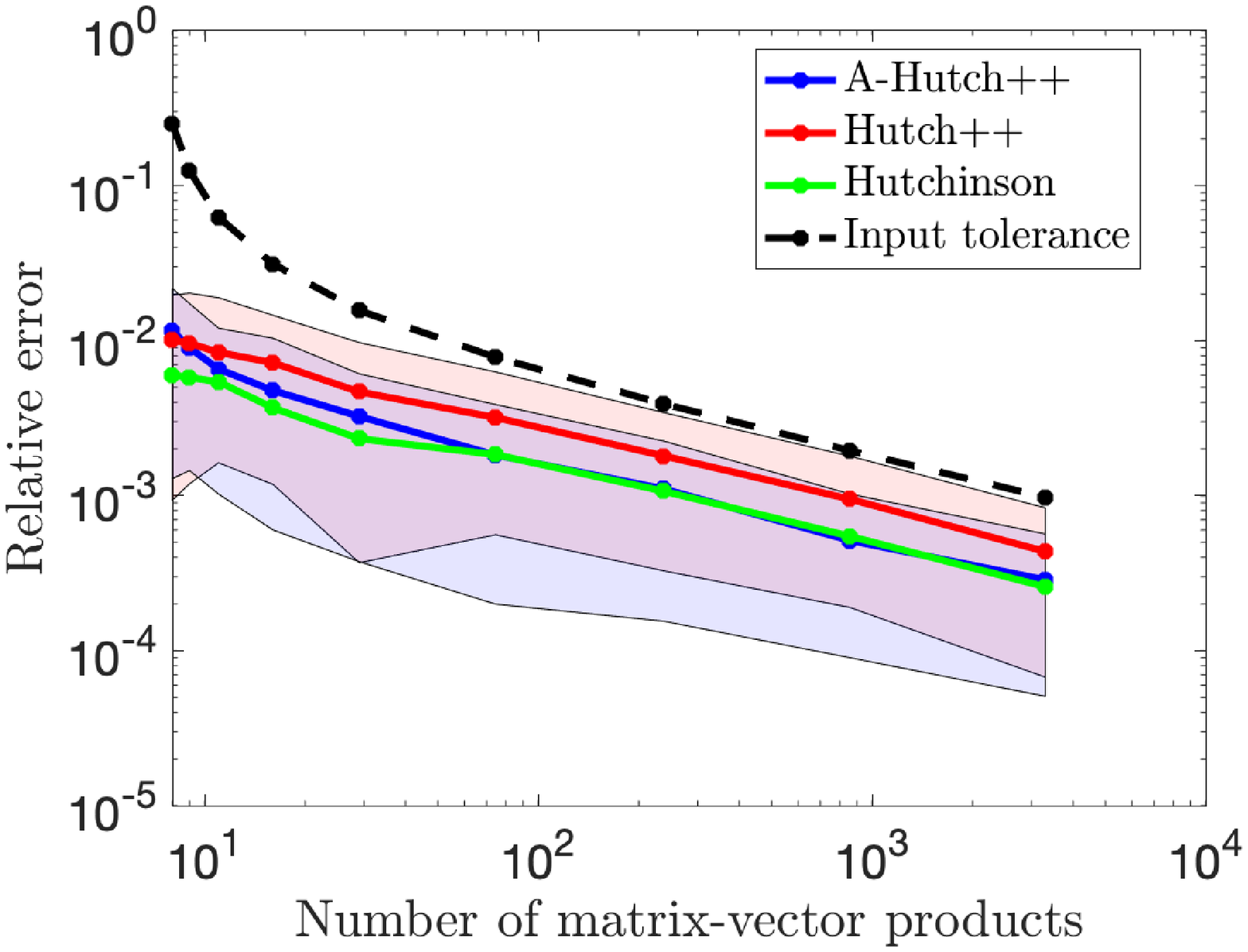}  
  \caption{$c = 0.1$}
  \label{fig:algebraic_c=01}
\end{subfigure}
\begin{subfigure}{.5\textwidth}
  \centering
  % include second image
  \includegraphics[width=\linewidth]{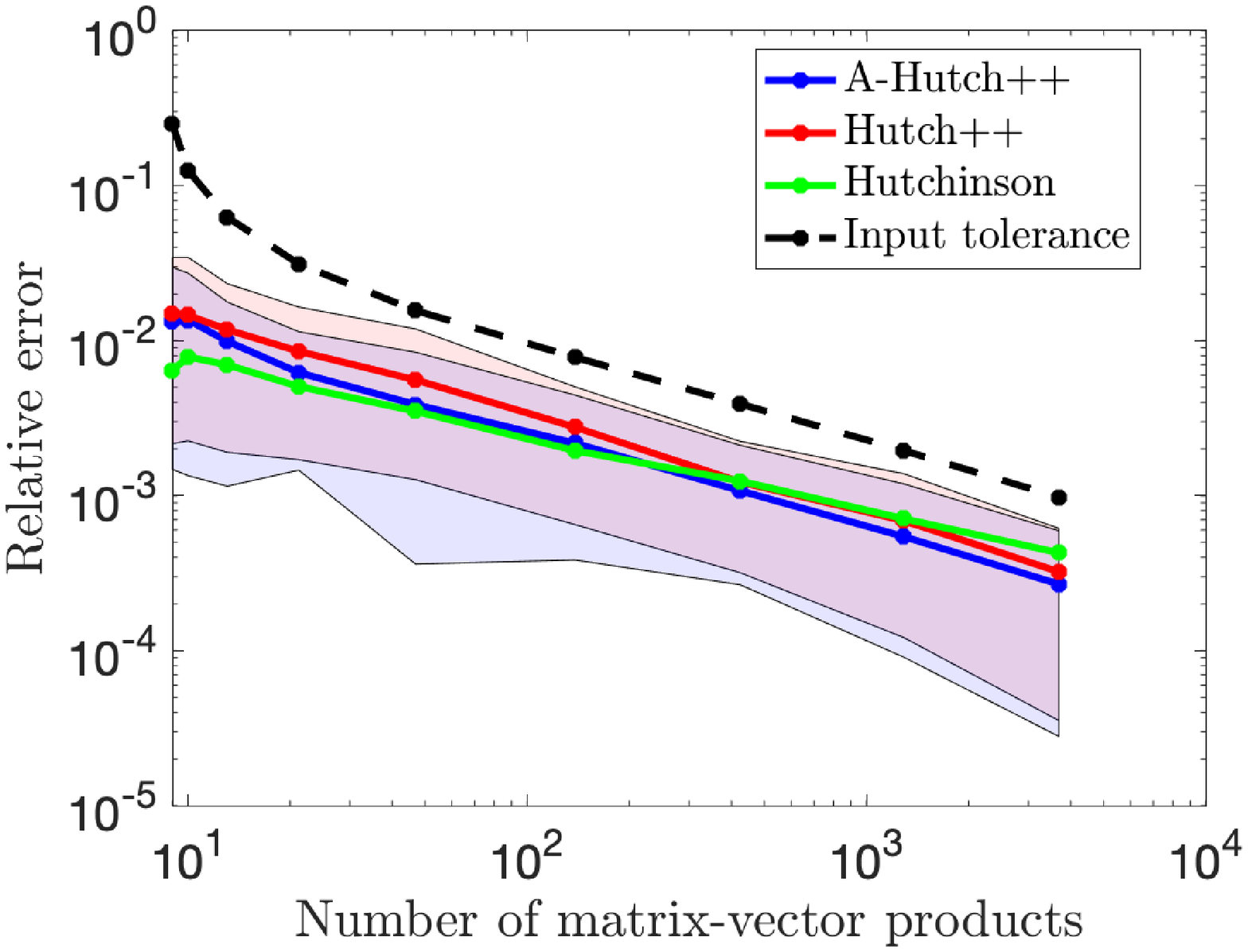}  
  \caption{$c=0.5$}
  \label{fig:algebraic_c=05}
\end{subfigure}

\begin{subfigure}{.5\textwidth}
  \centering
  % include third image
  \includegraphics[width=\linewidth]{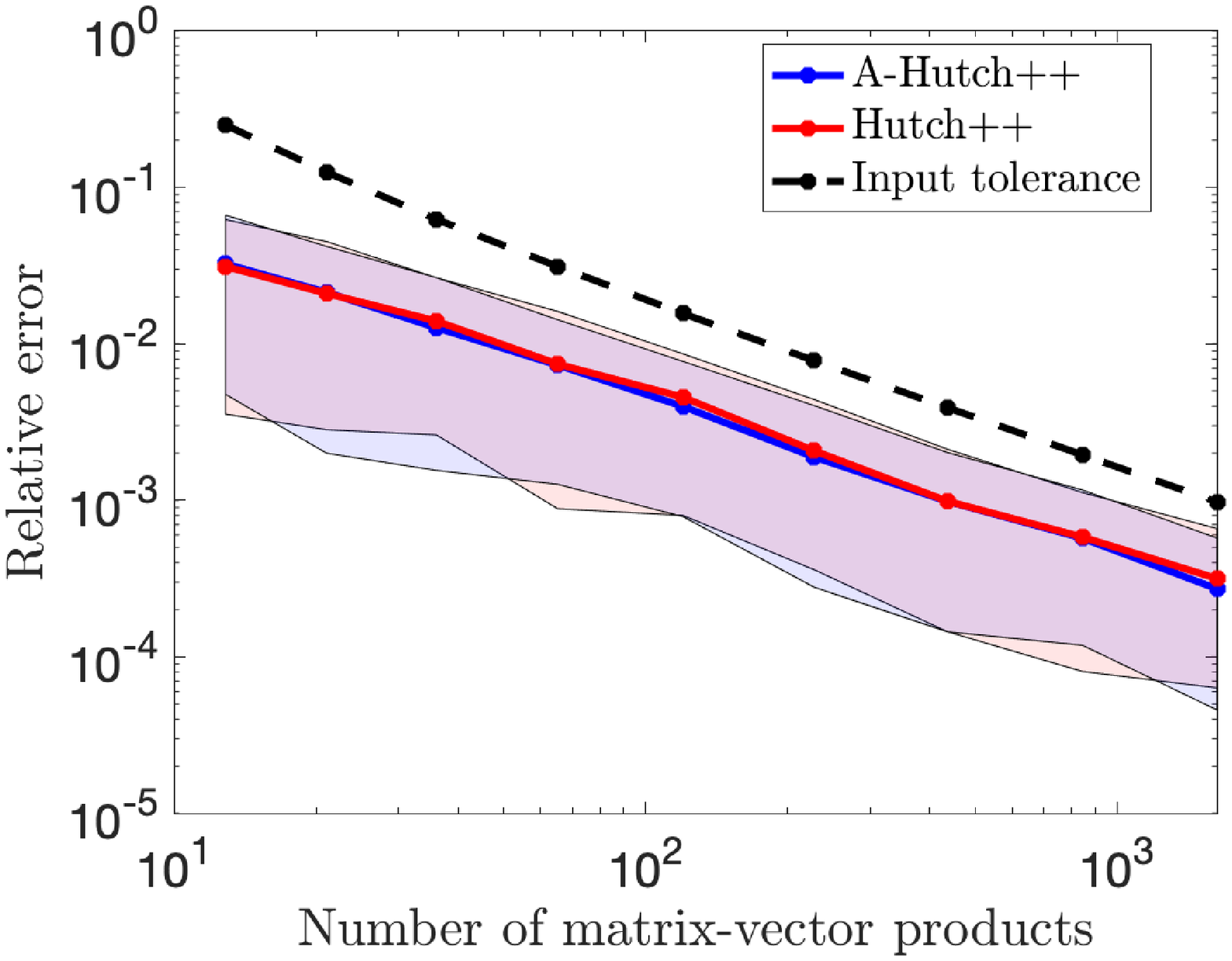}  
  \caption{$c=1$}
  \label{fig:algebraic_c=1}
\end{subfigure}
\begin{subfigure}{.5\textwidth}
  \centering
  % include fourth image
  \includegraphics[width=\linewidth]{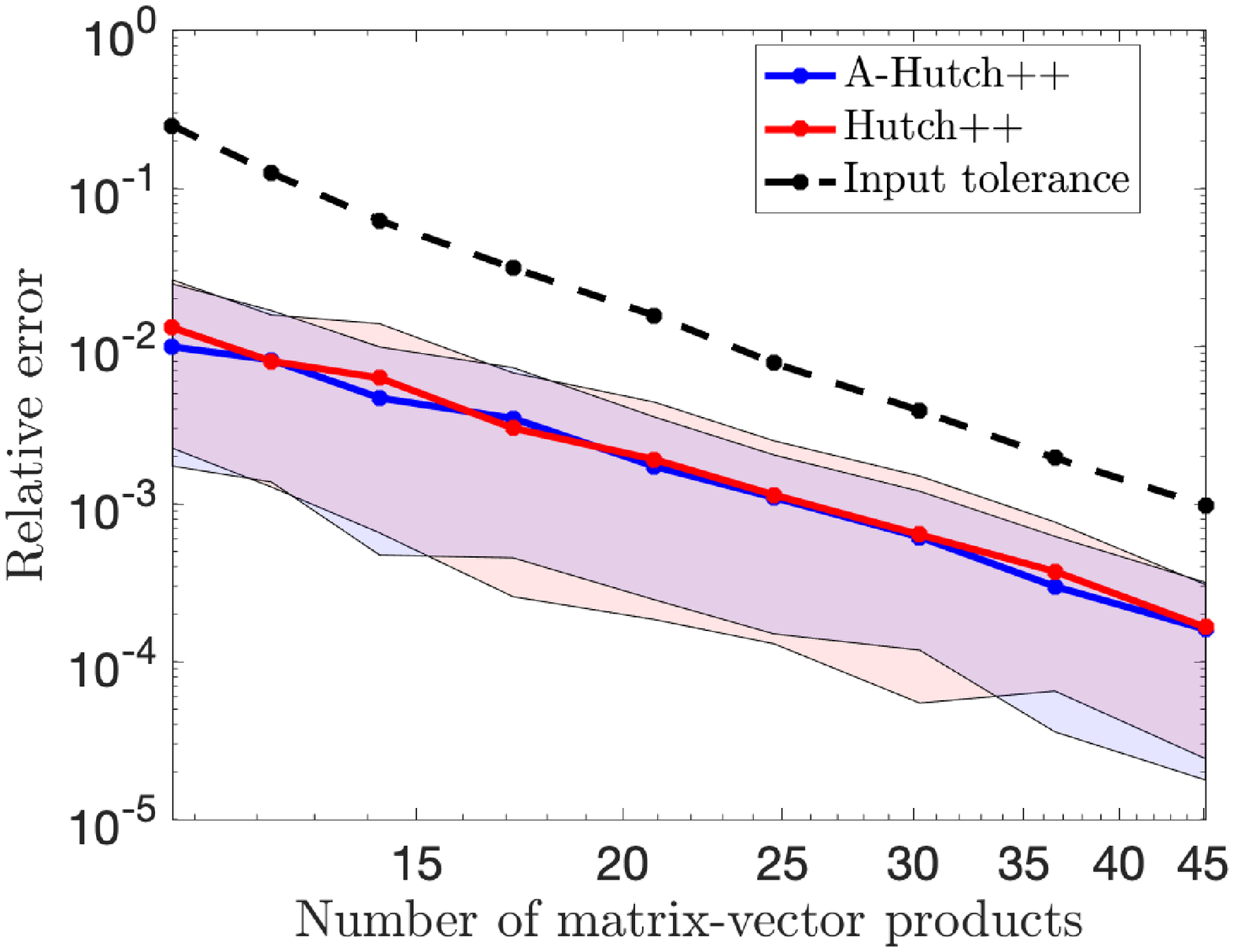}  
  \caption{$c=3$}
  \label{fig:algebraic_c=3}
\end{subfigure}
\caption{Comparison of A-Hutch++ and Hutch++ for the estimation of the trace of the synthetic matrices with algebraic decay from Section~\ref{section:synthetic_matrices}.}
\label{fig:algebraic}
\end{figure}

\begin{table}
\centering
    \begin{subtable}[c]{\textwidth}
    \centering
    \begin{tabular}{|l|l|l|l|l|}
    \hline
    $p$ & Total & Low rank approx. & Hutchinson est. & Ratio \\
    \hline \hline
    2  & 8.00    & 6.00 & 2.00    & 0.25 \\ \hline
    3  & 9.00    & 6.00 & 3.00    & 0.33 \\ \hline
    4  & 11.00   & 6.00 & 5.00    & 0.45 \\ \hline
    5  & 16.00   & 6.00 & 10.00   & 0.63 \\ \hline
    6  & 29.04   & 6.00 & 23.04   & 0.79 \\ \hline
    7  & 74.41   & 6.00 & 68.41   & 0.92 \\ \hline
    8  & 237.66  & 6.00 & 231.66  & 0.97 \\ \hline
    9  & 858.13  & 6.00 & 852.13  & 0.99 \\ \hline
    10 & 3302.76 & 6.00 & 3296.76 & 1.00 \\ \hline
    \end{tabular}
    \caption{$c=0.1$}
    \vspace{3mm}  
    \end{subtable}
    \\
    \begin{subtable}[c]{\textwidth}
    \centering
    \begin{tabular}{|l|l|l|l|l|}
    \hline
    $p$ & Total & Low rank approx. & Hutchinson est. & Ratio \\
    \hline \hline
    2  & 9.00    & 6.00   & 3.00    & 0.33 \\ \hline
    3  & 10.01   & 6.00   & 4.01    & 0.40 \\ \hline
    4  & 13.06   & 6.00   & 7.06    & 0.54 \\ \hline
    5  & 21.21   & 6.00   & 15.21   & 0.72 \\ \hline
    6  & 46.94   & 6.02   & 40.92   & 0.87 \\ \hline
    7  & 138.24  & 10.14  & 128.10  & 0.93 \\ \hline
    8  & 424.31  & 49.18  & 375.13  & 0.88 \\ \hline
    9  & 1287.60 & 206.96 & 1080.64 & 0.84 \\ \hline
    10 & 3688.39 & 914.18 & 2774.21 & 0.75 \\ \hline
    \end{tabular}
    \caption{$c=0.5$}
    \vspace{3mm} 
    \end{subtable}
    \\
    \begin{subtable}[c]{\textwidth}
    \centering
    \begin{tabular}{|l|l|l|l|l|}
    \hline
    $p$ & Total & Low rank approx. & Hutchinson est. & Ratio \\
    \hline \hline
   2  & 12.86   & 6.16   & 6.70   & 0.52 \\ \hline
3  & 21.07   & 8.86   & 12.21  & 0.58 \\ \hline
4  & 36.02   & 15.08  & 20.94  & 0.58 \\ \hline
5  & 65.15   & 27.68  & 37.47  & 0.58 \\ \hline
6  & 120.04  & 52.84  & 67.20  & 0.56 \\ \hline
7  & 228.02  & 101.18 & 126.84 & 0.56 \\ \hline
8  & 436.75  & 199.14 & 237.61 & 0.54 \\ \hline
9  & 843.98  & 396.32 & 447.66 & 0.53 \\ \hline
10 & 1630.29 & 793.36 & 836.93 & 0.51 \\ \hline
    \end{tabular}
    \caption{$c=1$}
    \vspace{3mm} 
    \end{subtable}
    \end{table}
    \pagebreak

\begin{table}
\ContinuedFloat
    \begin{subtable}[c]{\textwidth}
    \centering
        \begin{tabular}{|l|l|l|l|l|}
    \hline
    $p$ & Total & Low rank approx. & Hutchinson est. & Ratio \\
    \hline \hline
   2  & 10.66 & 8.20  & 2.46  & 0.23 \\ \hline
3  & 12.24 & 8.88  & 3.36  & 0.27 \\ \hline
4  & 14.24 & 10.76 & 3.48  & 0.24 \\ \hline
5  & 17.16 & 12.44 & 4.72  & 0.28 \\ \hline
6  & 20.91 & 15.22 & 5.69  & 0.27 \\ \hline
7  & 24.70 & 18.28 & 6.42  & 0.26 \\ \hline
8  & 30.28 & 22.50 & 7.78  & 0.26 \\ \hline
9  & 36.57 & 27.68 & 8.89  & 0.24 \\ \hline
10 & 45.14 & 34.50 & 10.64 & 0.24 \\ \hline
    \end{tabular}
    \caption{$c=3$}
    \vspace{3mm} 
    \end{subtable}
    \caption{The average distribution of matrix-vector products between the low rank approximation phase and stochastic trace esimation phase of A-Hutch++ applied on the synthetic matrices with algebraic decay and input tolerance $\varepsilon = 2^{-p} \tr(\bm{A})$ for $p = 2,3,\ldots,10$. A-Hutch++ requires at least 6 matrix-vector products to detect a minimum of the function $\tilde{m}(r)$ in \eqref{eq:tildeN}.}\label{table:matvecs_dist}
\end{table}

\subsubsection{Triangle counting}\label{sec:triangles}
For an undirected graph with adjacency matrix $\bm{B}$, the number of triangles in the graph is equal to $\frac{1}{6} \tr(\bm{B}^3)$; counting triangles arises for instance in data mining applications~\cite{avron}. We apply A-Hutch++ and Hutch++ to $\bm{A} = \bm{B}^3$, where $\bm{B}$ is the adjacency matrix of the following graphs:
% In this section we let $\bm{A} = \bm{B}^3$, where $\bm{B}$ is the adjacency matrix of a graph.  We let $\bm{B}$ be the adjacency matrix of the following graphs from~\cite{hpp}:
\begin{itemize}
    \item a Wikipedia vote network\footnote{Accessed from \url{https://snap.stanford.edu/data/wiki-Vote.html}} of size $7115$ ($\tr(\bm{B}^3) = 3650334$);
    \item an arXiv collaboration network\footnote{Accessed from \url{https://snap.stanford.edu/data/ca-GrQc.html}} of size $5242$ ($\tr(\bm{B}^3) = 289560$).
    %GR-QC . The matrix is of size $5242 \times 5242$ and $\tr(\bm{B}^3) = 289560$.
    
\end{itemize}
Note that one matrix-vector product with $\bm{A}$ corresponds to three matrix-vector products with $\bm{B}$. 
The numerical results are shown in Figure~\ref{fig:triangle}.

\begin{figure}[H]
\begin{subfigure}{.5\textwidth}
  \centering
  % include first image
  \includegraphics[width=\linewidth]{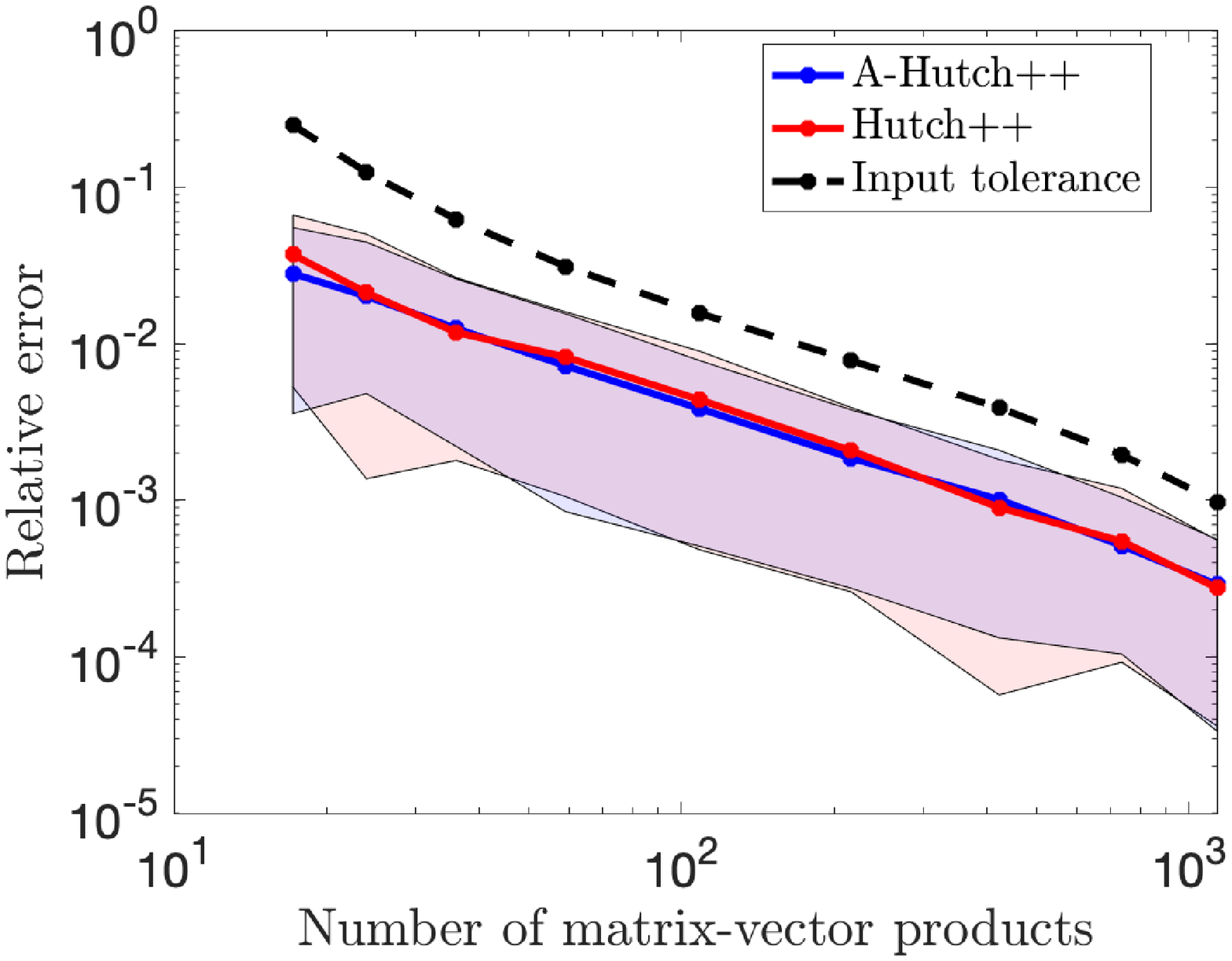}  
  \caption{Wikipedia vote network}
  \label{fig:wiki}
\end{subfigure}
\begin{subfigure}{.5\textwidth}
  \centering
  % include second image
  \includegraphics[width=\linewidth]{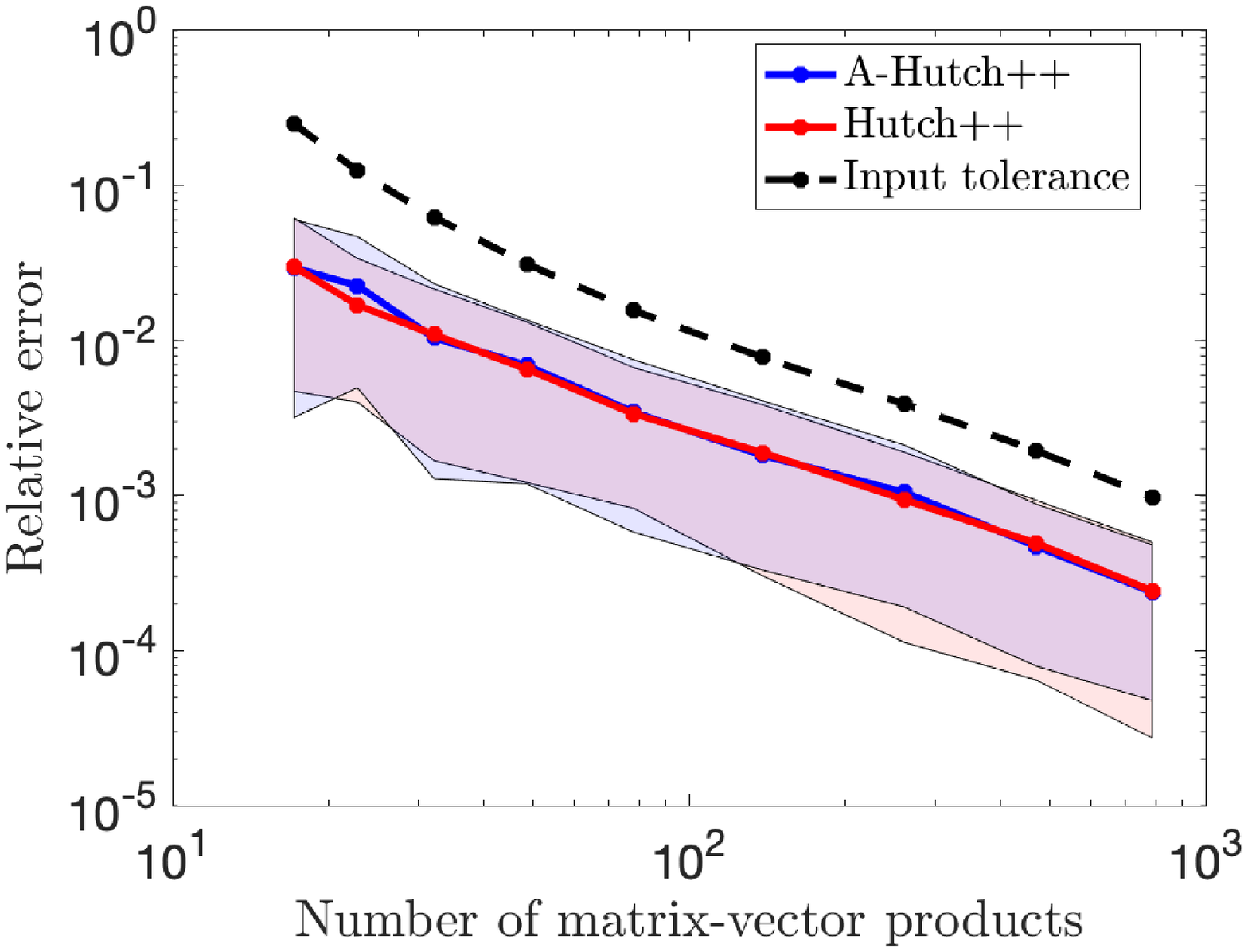}  
  \caption{Arxiv GR-QC}
  \label{fig:arxiv}
\end{subfigure}
\caption{Comparison of A-Hutch++ and Hutch++ for the triangle counting examples from Section~\ref{sec:triangles}.}
\label{fig:triangle}
\end{figure}

\subsubsection{Estrada index}\label{section:estrada}
For an undirected graph with adjacency matrix $\bm{B}$, the Estrada index is defined as $\tr(\exp(\bm{B}))$ and its applications include measuring the degree of protein protein folding~\cite{estrada} and network analysis~\cite{estrada2010}. As in~\cite{hpp}, we estimate the Estrada index of Roget’s Thesaurus semantic graph adjacency matrix\footnote{Accessed from \url{http://vlado.fmf.uni-lj.si/pub/networks/data/}}. %The same matrix was used in~\cite{hpp}.
We approximate matrix-vector products with $\bm{A} = \exp(\bm{B})$ using 30 iterations of the Lanczos method~\cite[Chapter 13.2]{higham2008functions}, after which the error from the approximated matrix-vector product is negligible. The results are shown in Figure~\ref{fig:estrada}. 

\begin{figure}[H]
  \centering
  % include first image
  \includegraphics[width=.5\linewidth]{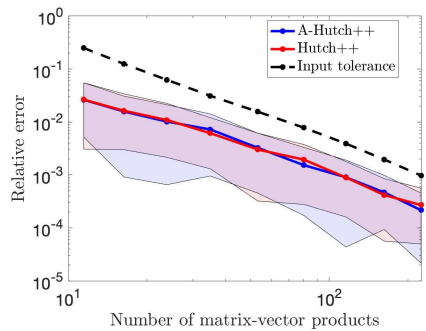}  
  \caption{Comparison of A-Hutch++ and Hutch++ for the estimation of the Estrada index of the matrix from Section~\ref{section:estrada}.}
\label{fig:estrada}
\end{figure}

\subsubsection{Log-determinant}\label{section:logdet}

The computation of the log-determinant of a symmetric positive definite matrix, which arises for instance in statistical learning~\cite{affandi} and Markov random fields models~\cite{wainwright2006log}, can be addressed by trace estimation exploiting the relation
\begin{equation*}
    \log\det(\bm{B}) = \tr(\log(\bm{B})).
\end{equation*}
In our setting we apply A-Hutch++ and Hutch++ to $\bm{A} =\log(\bm{B})$ for the following symmetric positive definite matrices $\bm{B}$:
\begin{itemize}
    %\item The Gaussian Process Log Likelihood example in \cite{hpp}. This matrix is of size $6400 \times 6400$.
    \item  $\bm{B} = \bm{I} + \sum\limits_{j=1}^{40}\frac{10}{j^2}\bm{x}_j \bm{x}_j^T + \sum\limits_{j=41}^{300} \frac{1}{j^2} \bm{x}_j \bm{x}_j^T$ where $\bm{x}_1,\cdots,\bm{x}_{300} \in \mathbb{R}^{5000}$ are generated in Matlab using \texttt{sprandn(5000,1,0.025)}. This example comes from~\cite{saibabaipsen,sorensen}. 
    %used in \cite{saibabaipsen}, which in turn were motivated by a similar example in~\cite{sorensen}. 
    $\bm{B}$ has an eigenvalue gap at index $40$. Matrix-vector products with $\bm{A}=\log(\bm{B})$ are approximated using $25$ iterations of Lanczos method. %This matrix has a gap in the eigenvalues at index 40.%~\cite{saibabaipsen}.
    \item  $\bm{B}$ is the Thermomech TC matrix\footnote{Accessed from \url{https://sparse.tamu.edu/Botonakis/thermomech\_TC}} from the SuiteSparse Matrix Collection~\cite{sparse}. Matrix-vector products with $\bm{A}=\log(\bm{B})$ are approximated using $35$ iterations of Lanczos method. 
\end{itemize}
The numerical results are shown in Figure~\ref{fig:logdet}. %In these examples A-Hutch++ performs better, which is expected since the singular value decay of $\bm{A} = \log(\bm{B})$ is slow and A-Hutch++ spends most of the matrix-vector products with $\bm{A} = \log(\bm{B})$ on the trace estimation. 

\begin{figure}
\begin{subfigure}{.5\textwidth}
  \centering
  % include fourth image
  \includegraphics[width=\linewidth]{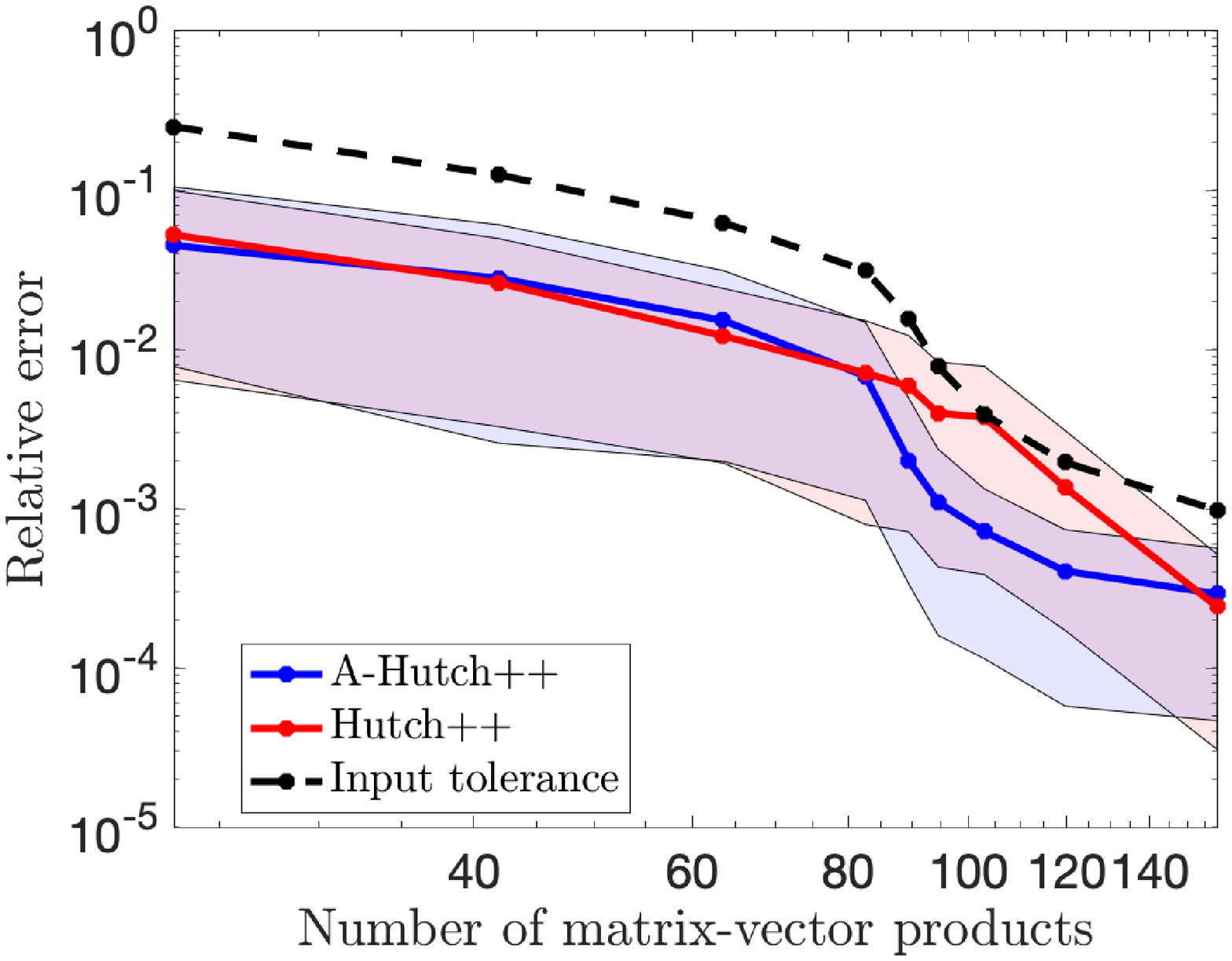}  
  \caption{Estimating the log-determinant of the matrix from \cite{saibabaipsen}.}
  \label{fig:saibaba_ipsen}
\end{subfigure}
\begin{subfigure}{.5\textwidth}
  \centering
  % include fourth image
  \includegraphics[width=\linewidth]{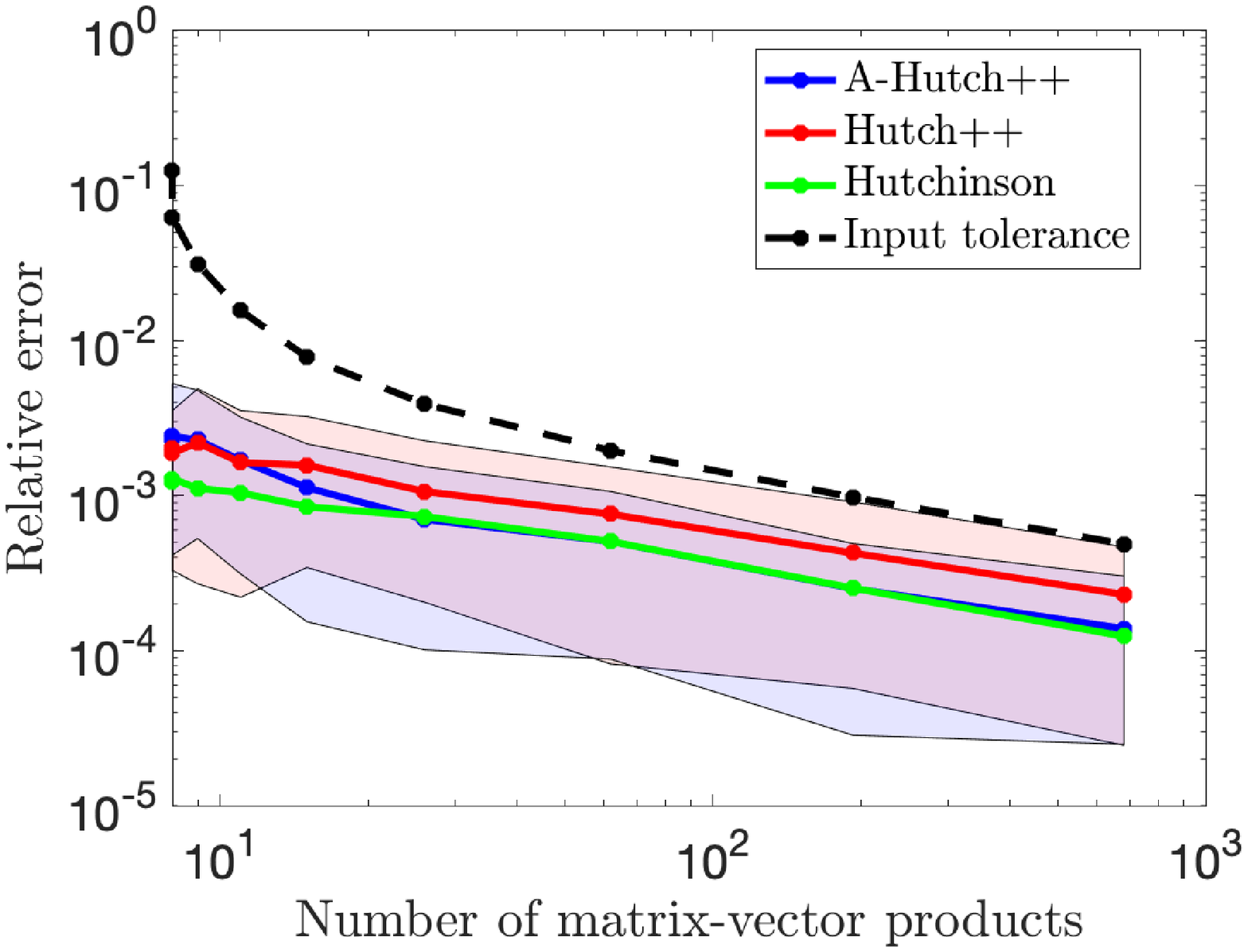}  
  \caption{Estimating the log-determinant of the matrix Thermomech TC.}
  \label{fig:TC}
\end{subfigure}
\caption{Comparison of A-Hutch++ and Hutch++ for the log determinant estimation of the matrices from Section~\ref{section:logdet}.}
\label{fig:logdet}
\end{figure}

\subsubsection{Trace of inverses}\label{section:inverse}

We consider $\bm{A} = \bm{B}^{-1}$ for the following choices of $\bm{B}$:
\begin{itemize}
    \item $\bm{B} = \text{tridiag}(-1,4,-1)$ is a $10000 \times 10000$ tridiagonal matrix with $4$ along the diagonal and $-1$ along the upper and lower subdiagonal (taken from~\cite{frommer});
    \item $\bm{B}$ a block tridiagonal matrix of size $k^2 \times k^2$ generated from discretizing Poisson's equation with the 5-point operator on a $k \times k$ mesh, with $k = 100$ (taken from~\cite{golubtrace}).
\end{itemize}
%We use an example from~\cite{frommer} in which we let $\bm{A} = \bm{B}^{-1}$ where $\bm{B} = \text{tridiag}(-1,4,-1)$ is a $5000 \times 5000$ tridiagonal matrix with $4$ along the diagonal and $-1$ along the upper and lower subdiagonal. Furthermore, we take another example from~\cite{golubtrace}, for which we estimate the $\tr(\bm{A}) = \tr\left( \bm{B}^{-1} \right)$ where $\bm{B}$ a block tridiagonal matrix of size $k^2 \times k^2$ generated from discretizing Poisson's equation with the 5-point operator on a $k \times k$ mesh. We let $k = 100$.
Matrix-vector products with $\bm{A} = \bm{B}^{-1}$ are computed using backslash in Matlab. The results are shown in Figure~\ref{fig:inv}.

\begin{figure}
\begin{subfigure}{.5\textwidth}
  \centering
  % include first image
  \includegraphics[width=\linewidth]{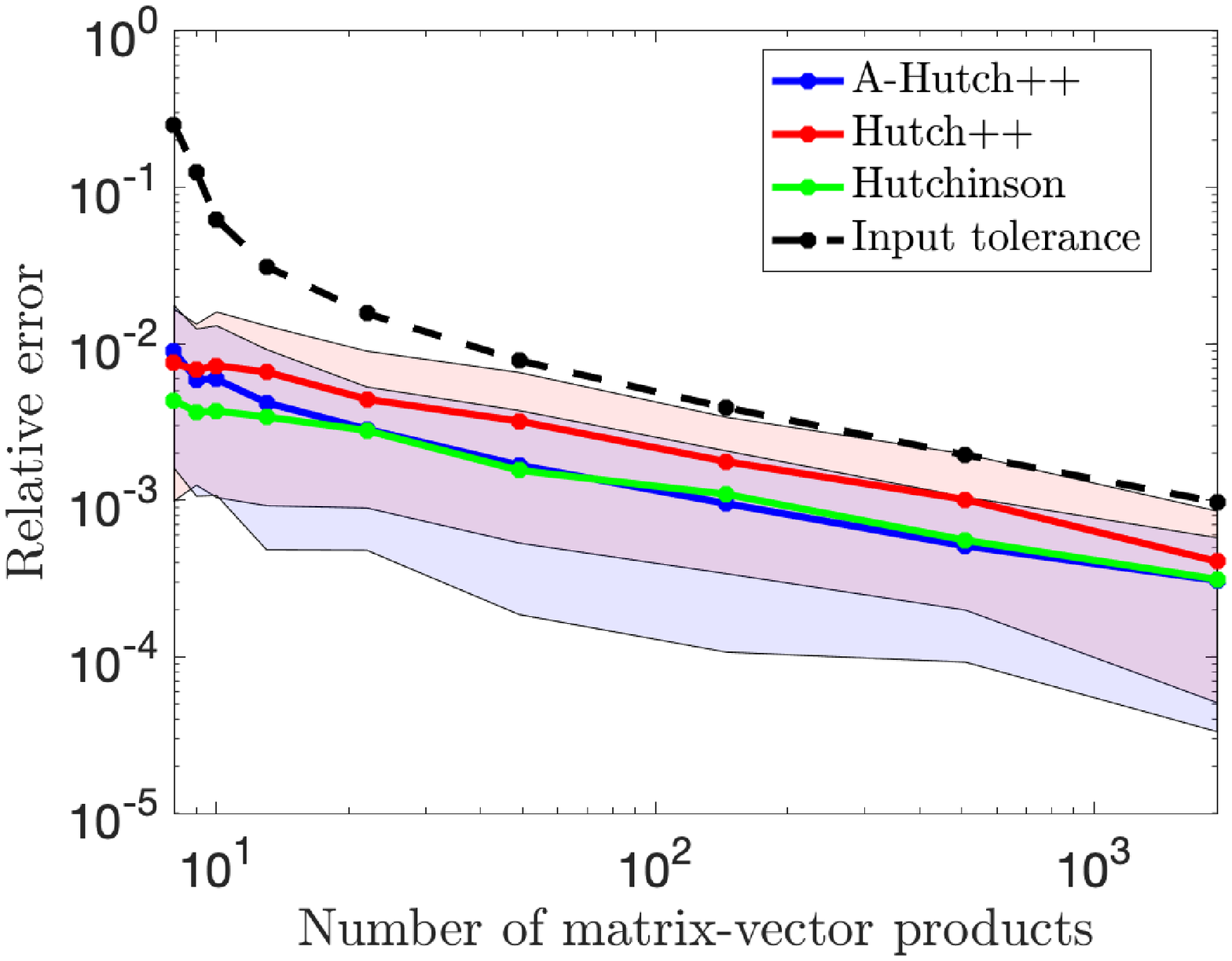}  
  \caption{Inverse of $\text{tridiag}(-1,4,-1)$.}
  \label{fig:tridiaginv}
\end{subfigure}
\begin{subfigure}{.5\textwidth}
  \centering
  % include second image
  \includegraphics[width=\linewidth]{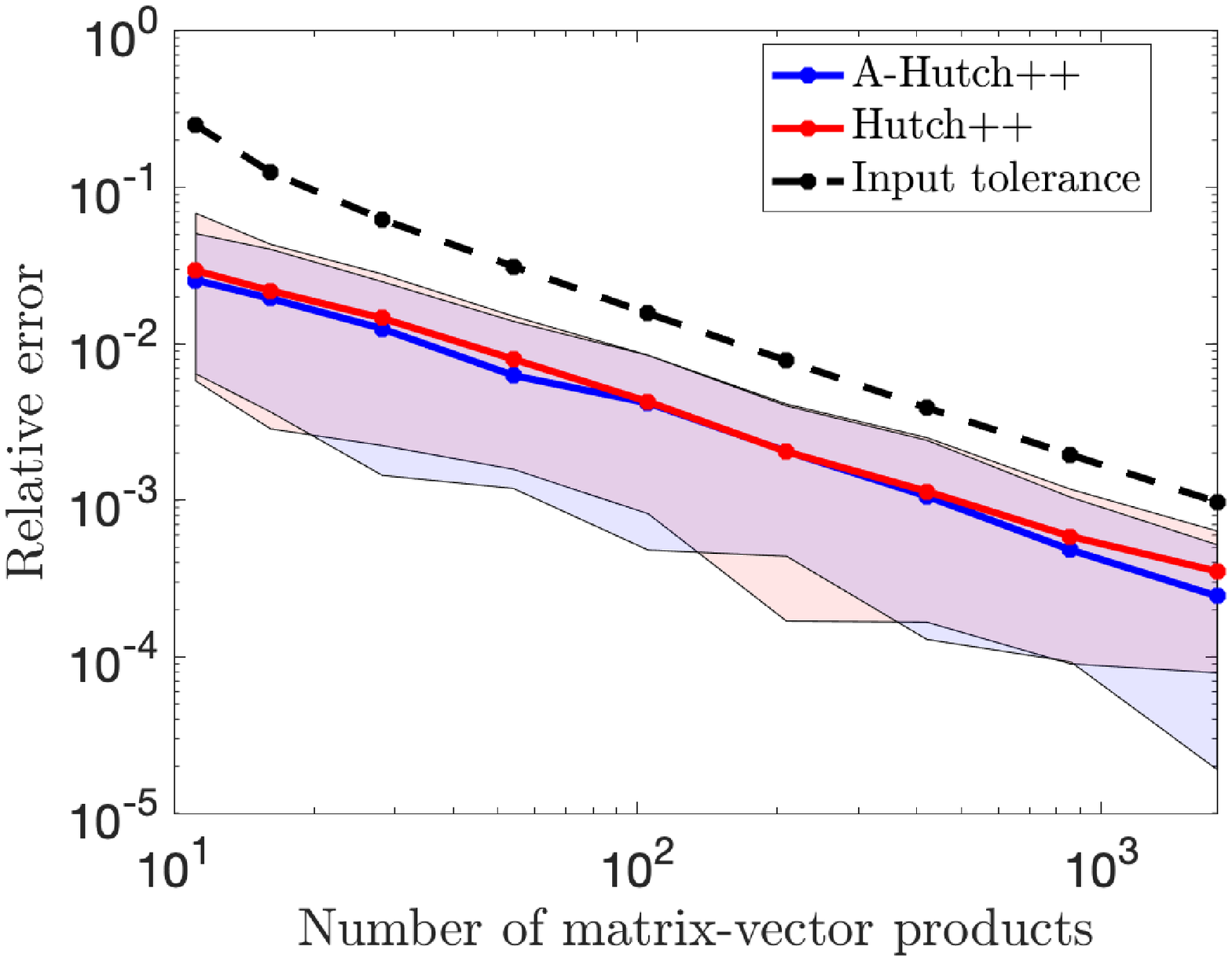}  
  \caption{Inverse of the matrix generated from discretizing Poisson's equation.}
  \label{fig:poissoninv}
\end{subfigure}
\caption{Comparison of A-Hutch++ and Hutch++ for the estimation of the trace of the inverse of the matrices described in Section~\ref{section:inverse}.}
\label{fig:inv}
\end{figure}

%Section 4
\section{Nyström++}

As explained in the introduction, Hutch++ requires at least two passes over the matrix $\bm{A}$.
%In this section we present a version of Algorithm \ref{alg:hpp} that requires only one pass over a symmetric positive semi-definite matrix $\bm{A}$ and fits well within the streaming model. 
In \cite{hpp}, Algorithm~\ref{alg:sp_hpp} was presented, and its analysis was improved in \cite{nahutchpp}. It requires only one pass over the input matrix, when computing the matrix vector products in line~\ref{line:blockmatvec}, and we thus call it \textit{Single Pass Hutch++}.
\begin{algorithm}
\caption{Single Pass Hutch++}
\label{alg:sp_hpp}
\textbf{input:} Symmetric positive semi-definite $\bm{A} \in \mathbb{R}^{n \times n}$. Number of matrix-vector products $m \in \mathbb{N}$.\\
\textbf{output:} An approximation to $\tr(\bm{A}): \tr_{m}^{\mathsf{sph++}}(\bm{A})$
\begin{algorithmic}[1]
    \State Fix positive constants $c_1,c_2$ and $c_3$ such that $c_1 < c_2$ and $c_1 + c_2 + c_3 = 1$
    \State Sample $\bm{\Omega} \in \mathbb{R}^{d \times c_1 m}, \bm{\Psi} \in \mathbb{R}^{d \times c_2 m}, \bm{\Phi} \in \mathbb{R}^{d \times c_3m}$ with i.i.d. $N(0,1)$ or Rademacher entries
    \State Compute $\begin{bmatrix} \bm{X} & \bm{Y} & \bm{Z} \end{bmatrix}  = \bm{A}\begin{bmatrix} \bm{\Omega} & \bm{\Psi} & \bm{\Phi} \end{bmatrix}$ \label{line:blockmatvec}
    \State \textbf{return} $\tr_m^{\mathsf{sph++}}(\bm{A}) = \tr((\bm{\Omega}^T \bm{Y})^{\dagger} (\bm{X}^T \bm{Y})) + \frac{1}{c_3m} (\tr(\bm{\Phi}^T \bm{Z})- \tr(\bm{\Phi}^T \bm{Y}(\bm{\Omega}^T\bm{Y})^{\dagger}\bm{X}^T \bm{\Phi}))$
\end{algorithmic}
\end{algorithm}
It also fits the streaming model because an update $\bm{A} + \bm{E}$ of the input matrix trivially translated into an update of the matrix-vector products, without having to revisit $\bm{A}$. It is similar to Hutch++ since it consists of a randomized low rank approximation phase and a stochastic trace estimation phase. The low rank approximation phase is performed by computing the low rank approximation $\bm{A\Psi}(\bm{\Omega}^T \bm{A} \bm{\Psi})^{\dagger} (\bm{A}\bm{\Omega})^T = \bm{Y}(\bm{\Omega}^T \bm{Y})^{\dagger} \bm{X}^T$, where $\bm{X},\bm{Y}$ and $\bm{Z}$ are as in line~\ref{line:blockmatvec} of Single Pass Hutch++. The trace of the low rank approximation equals $\tr((\bm{\Omega}^T \bm{Y})^{\dagger} (\bm{X}^T \bm{Y}))$ via the cyclic property of the trace. In the stochastic trace estimation phase the trace of $\bm{A}-\bm{Y}(\bm{\Omega}^T \bm{Y})^{\dagger} \bm{X}^T$ is estimated, which is done by the stochastic trace estimator~\eqref{eq:hutchinson}.
%re is no need to revisit $\bm{A}$ to update the output of Algorithm~\ref{alg:sp_hpp}, since the results in line~\ref{line:blockmatvec} can be updated as follows:
%\begin{equation}\label{eq:linear_update}
%    \bm{X} \leftarrow \bm{X} + \bm{E\Omega}, \quad \bm{Y} \leftarrow \bm{Y} + \bm{E\Psi}, \quad \bm{Z} \leftarrow \bm{Z} + \bm{E\Phi}.
%\end{equation}
Single Pass Hutch++ satisfies similar guarantees as Hutch++, but is observed to produce a less accurate trace estimate than Hutch++ with the same number of matrix-vector products. More formally, the following result was proved.
\begin{theorem}[{\cite[Theorem 1.1]{nahutchpp}}]
If Single Pass Hutch++ is implemented with $m = O\left(\varepsilon^{-1}\sqrt{\log(\delta^{-1})} + \log(\delta^{-1})\right)$ matrix-vector products then 
\begin{equation*}
    \left|\tr_m^{\mathsf{sph++}}(\bm{A})-\tr(\bm{A})\right| \leq \varepsilon \tr(\bm{A}).
\end{equation*}
holds with probability at least $1-\delta$.
\end{theorem}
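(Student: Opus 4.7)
The plan is to follow the two-phase error decomposition used in the original Hutch++ analysis, now applied to the generalized Nyström-type sketch appearing in Single Pass Hutch++. Using the cyclic property of the trace, the returned estimate satisfies
\[
\tr_m^{\mathsf{sph++}}(\bm{A}) - \tr(\bm{A}) = \frac{1}{c_3 m}\tr\!\left(\bm{\Phi}^T(\bm{A}-\hat{\bm{A}})\bm{\Phi}\right) - \tr(\bm{A}-\hat{\bm{A}}),
\]
where $\hat{\bm{A}}:=\bm{A}\bm{\Psi}(\bm{\Omega}^T\bm{A}\bm{\Psi})^{\dagger}(\bm{A}\bm{\Omega})^T = \bm{Y}(\bm{\Omega}^T\bm{Y})^{\dagger}\bm{X}^T$. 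Thus the total error is exactly the Hutchinson error on the residual $\bm{A}-\hat{\bm{A}}$, and it suffices to control $\|\bm{A}-\hat{\bm{A}}\|_F$ (and, to a lesser extent, $\|\bm{A}-\hat{\bm{A}}\|_2$) with high probability over $\bm{\Omega},\bm{\Psi}$, and then to apply a Hutchinson tail bound conditionally on these.

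The first step I would carry out is a sketching bound of the form $\|\bm{A}-\hat{\bm{A}}\|_F \leq O(\|\bm{A}-\bm{A}_k\|_F)$, holding with probability at least $1-\delta/2$ provided $c_1 m$ and $c_2 m$ are both $\Theta(k+\log(1/\delta))$ with $c_1<c_2$. This is the standard oversampled generalized Nyström guarantee and follows from Gaussian concentration of $(\bm{\Omega}^T\bm{A}\bm{\Psi})^{\dagger}$ together with the deterministic structural inequalities for randomized sketches. Positive semidefiniteness of $\bm{A}$ then yields the key tail estimate
\[
\|\bm{A}-\bm{A}_k\|_F^2 \;=\; \sum_{i>k}\sigma_i^2 \;\leq\; \sigma_{k+1}\sum_{i>k}\sigma_i \;\leq\; \tr(\bm{A})^2/k,
\]
using $\sigma_{k+1}\leq \tr(\bm{A})/k$. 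Consequently $\|\bm{A}-\hat{\bm{A}}\|_F \leq O(\tr(\bm{A})/\sqrt{k})$ with high probability, and a parallel argument gives $\|\bm{A}-\hat{\bm{A}}\|_2 \leq O(\tr(\bm{A})/k)$.

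Next, conditioning on $\bm{\Omega}$ and $\bm{\Psi}$, I would invoke a Hutchinson tail bound analogous to~\eqref{eq:tailbound_kressnercortinovis}: with probability at least $1-\delta/2$ over $\bm{\Phi}$, the Hutchinson term deviates from $\tr(\bm{A}-\hat{\bm{A}})$ by at most
\[
O\!\left(\|\bm{A}-\hat{\bm{A}}\|_F\sqrt{\tfrac{\log(1/\delta)}{c_3 m}} \;+\; \|\bm{A}-\hat{\bm{A}}\|_2\tfrac{\log(1/\delta)}{c_3 m}\right).
\]
Plugging in the bounds from the previous paragraph and asking this to be at most $\varepsilon\tr(\bm{A})$ balances when $k = \Theta(\varepsilon^{-1}\sqrt{\log(1/\delta)})$ and $c_3 m = \Theta(\varepsilon^{-1}\sqrt{\log(1/\delta)} + \log(1/\delta))$; a union bound over the two failure events then yields the claimed total budget $m = O(\varepsilon^{-1}\sqrt{\log(1/\delta)} + \log(1/\delta))$.

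The main obstacle lies in the first step. In the original Hutch++ the sketch is the orthogonal projection $\bm{Q}\bm{Q}^T\bm{A}$, so the residual is controlled through standard subspace-angle arguments and is moreover PSD after symmetric projection. Here $\hat{\bm{A}}$ is defined through a pseudoinverse and need not be PSD, so one must separately handle the failure mode where $\bm{\Omega}^T\bm{A}\bm{\Psi}$ is ill-conditioned. The asymmetry $c_1<c_2$ provides the oversampling required to make $(\bm{\Omega}^T\bm{A}\bm{\Psi})^{\dagger}$ concentrate, and this is where the bulk of the technical work of~\cite{nahutchpp} resides.
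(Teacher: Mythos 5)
The paper does not prove this statement — it is quoted verbatim from \cite{nahutchpp} (Jiang, Pham, Woodruff, Zhang) as a point of comparison for Nyström++, so there is no in-paper proof against which to measure you. That said, your outline mirrors almost exactly the template the paper uses to prove the analogous Theorem~\ref{theorem:npp} for Nyström++: decompose the estimator error into a sketching error plus a Hutchinson error on the residual, bound $\|\bm{A}-\hat{\bm{A}}\|_F$ by $O(\|\bm{A}-\bm{A}_k\|_F)$, use the PSD tail inequality $\|\bm{A}-\bm{A}_k\|_F\le \tr(\bm{A})/\sqrt{k}$, apply a Hutchinson concentration inequality conditionally on the sketching matrices, and finish with a union bound. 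Your algebraic identity for $\tr_m^{\mathsf{sph++}}(\bm{A})-\tr(\bm{A})$ and your $\tr(\bm{A})^2/k$ bound on $\|\bm{A}-\bm{A}_k\|_F^2$ are both correct.

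Two places where your sketch glosses over nontrivial work. First, $\hat{\bm{A}}=\bm{Y}(\bm{\Omega}^T\bm{Y})^{\dagger}\bm{X}^T$ is \emph{not} symmetric (unlike the Nyström sketch or the orthogonal-projection residual in Hutch++), so $\bm{A}-\hat{\bm{A}}$ is not symmetric and the Hutchinson tail bound~\eqref{eq:tailbound_kressnercortinovis} — and Lemma~\ref{lemma:combinedbound_gaussian} derived from it — are stated only for symmetric $\bm{B}$. You would need to pass to the symmetric part $\tfrac12\bigl((\bm{A}-\hat{\bm{A}})+(\bm{A}-\hat{\bm{A}})^T\bigr)$, which preserves the trace and whose Frobenius norm is bounded by $\|\bm{A}-\hat{\bm{A}}\|_F$, or invoke a nonsymmetric version of the tail bound; this is a genuine technical step, not a cosmetic one. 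Second, your claim that ``a parallel argument gives $\|\bm{A}-\hat{\bm{A}}\|_2 = O(\tr(\bm{A})/k)$'' is unsupported: generalized-sketch spectral error bounds carry a Frobenius-tail term, so generically one only gets $\|\bm{A}-\hat{\bm{A}}\|_2 = O(\|\bm{A}-\bm{A}_k\|_F) = O(\tr(\bm{A})/\sqrt{k})$. Fortunately you do not need the stronger spectral claim: the additive $\log(\delta^{-1})$ in the matrix-vector budget arises from the side condition on $m$ needed to enter the sub-Gaussian regime of the Hutchinson bound (as in Lemma~\ref{lemma:combinedbound_gaussian}, which requires $m\gtrsim\log(2/\delta)/\rho(\bm{B})$, and $\rho(\bm{B})\ge 1$), which is precisely how the paper's Nyström++ proof handles the spectral-norm term — by requiring $m\ge c\log(\delta^{-1})$ throughout rather than by bounding $\|\cdot\|_2$ separately.
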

On the other hand, the numerical experiments in \cite{nahutchpp} demonstrated that due to the single pass property, which allows for performing matrix-vector products in parallel, Single Pass Hutch++ outperforms Hutch++ in terms of wall-clock time.\footnote{One needs to be careful how to implement the low-rank approximation in Single Pass Hutch++, since it is prone to numerical instabilities due to the pseudoinverse of $\bm{\Omega}^T \bm{Y}$. In our implementation we follow the suggestion given in \cite[Section 5.1]{nakatsukasa2020fast}. We compute a thin QR-decomposition of $(\bm{\Omega}^T \bm{Y})^T = \bm{QR}$ and let $\bm{S} = \bm{Y}\bm{Q}$ and $\bm{Z} = \bm{X}\bm{R}^{-1}$. Then $\bm{Y}(\bm{\Omega}^T \bm{Y})^{\dagger} \bm{X}^T = \bm{S}\bm{Z}^T.$}

For symmetric positive semi-definite $\bm{A}$ one can obtain a version of Hutch++ by utilizing the Nyström approximation $\bm{A} \approx \bm{A\Omega}(\bm{\Omega}^T \bm{A} \bm{\Omega})^{\dagger} \bm{\Omega}^T \bm{A}$~\cite{gittensmahoney} instead. We call this algorithm Nyström++, see Algorithm~\ref{alg:npp}. The idea of using the Nyström approximation in the context of trace estimation had previously been presented in~\cite[Section 4]{linlin} in a broader context, but no analysis was presented. A version of Hutch++ using a similar low-rank approximation was also mentioned in \cite{ramhutch++}. Furthermore, Nyström++ also fits the streaming model. Another possible advantage of Nyström++ over Hutch++ is that while the Nyström approximation is less accurate than the randomized SVD, one can spend more matrix-vector products for both attaining a low-rank approximation of $\bm{A}$ and on estimating the trace of $\bm{A}-\bm{A\Omega}(\bm{\Omega}^T \bm{A\Omega})^{\dagger} \bm{\Omega}^T\bm{A}$. 
\begin{algorithm}[H]
\caption{Nyström++}
\label{alg:npp}
\textbf{input:} Symmetric positive semi-definite $\bm{A} \in \mathbb{R}^{n \times n}$. Number of matrix-vector products $m \in \mathbb{N}$ (multiple of 2).\\
\textbf{output:} An approximation to $\tr(\bm{A}): \tr_{m}^{\mathsf{n++}}(\bm{A})$.
\begin{algorithmic}[1]
    \State Sample $\bm{\Omega} \in \mathbb{R}^{n \times \frac{m}{2}}, \bm{\Phi} \in \mathbb{R}^{n \times \frac{m}{2}}$ with i.i.d. $N(0,1)$ entries.
    \State Compute $\begin{bmatrix} \bm{X} & \bm{Y} \end{bmatrix}  = \bm{A}\begin{bmatrix} \bm{\Omega} & \bm{\Phi}\end{bmatrix}$.
    \State \textbf{return} $\tr_m^{\mathsf{n++}}(\bm{A}) = \text{tr}((\bm{\Omega}^T \bm{X})^{\dagger} (\bm{X}^T \bm{X})) + \frac{2}{m} (\text{tr}(\bm{\Phi}^T \bm{Y})- \tr(\bm{\Phi}^T \bm{X}(\bm{\Omega}^T\bm{X})^{\dagger}\bm{X}^T \bm{\Phi}))$ \label{line:return_npp}
\end{algorithmic}
\end{algorithm}
Recall that the trace of the Nyström approximation $\bm{X}(\bm{\Omega}^T \bm{X})^{\dagger} \bm{X}^T$ equals $\tr\left((\bm{\Omega}^T \bm{X})^{\dagger} (\bm{X}^T \bm{X})\right)$ via the cyclic property of the trace.

\subsection{Analysis of Nyström++}

In the following, we show that Algorithm~\ref{alg:npp} enjoys the same theoretical guarantees as 
Algorithm~\ref{alg:hpp}~\cite[Theorem 1.1]{hpp}. 
We begin with a result on the Frobenius norm error of the Nyström approximation.
\begin{lemma}\label{lemma:nystrom_bound}
Let $\bm{A} \in \mathbb{R}^{n \times n}$ be symmetric positive semidefinite and let $\bm{\Omega} \in \mathbb{R}^{n \times 2k}$ be a standard Gaussian matrix with $k \geq 5$. Then
\begin{equation*}
     \|\bm{A}-\bm{A}\bm{\Omega}(\bm{\Omega}^T\bm{A}\bm{\Omega})^{\dagger} \bm{\Omega}^T\bm{A}\|_F \leq \frac{542}{\sqrt{k}}\tr(\bm{A})
\end{equation*}
holds with probability at least $1-6e^{-k}$.
\end{lemma}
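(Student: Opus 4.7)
The plan is to connect the Nyström error to the residual of the randomized SVD applied to $\bm{B}:=\bm{A}^{1/2}$, and then exploit the fact that the error is a \emph{Gram matrix} of that residual. Using $\bm{\Omega}^T\bm{A}\bm{\Omega}=(\bm{B}\bm{\Omega})^T(\bm{B}\bm{\Omega})$ and a thin SVD of $\bm{B}\bm{\Omega}$, a direct computation shows that
\[
\bm{P}\;:=\;\bm{B}\bm{\Omega}(\bm{\Omega}^T\bm{A}\bm{\Omega})^\dagger\bm{\Omega}^T\bm{B}
\]
equals the orthogonal projector onto $\text{range}(\bm{B}\bm{\Omega})$. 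Substituting $\bm{A}=\bm{B}\bm{B}$ therefore gives
\[
\bm{A}-\bm{A}\bm{\Omega}(\bm{\Omega}^T\bm{A}\bm{\Omega})^\dagger\bm{\Omega}^T\bm{A}\;=\;\bm{B}(\bm{I}-\bm{P})\bm{B}\;=\;\bm{F}^T\bm{F},\qquad \bm{F}:=(\bm{I}-\bm{P})\bm{B},
\]
and $\bm{F}$ is precisely the residual of the randomized SVD of $\bm{B}$ with $2k$ Gaussian test vectors, for which well-developed concentration results exist.

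The next step is to decouple the two factors of $\bm{F}^T\bm{F}$ via the elementary inequality
\[
\|\bm{F}^T\bm{F}\|_F^2\;=\;\sum_i\sigma_i(\bm{F})^4\;\le\;\|\bm{F}\|_2^2\,\|\bm{F}\|_F^2,
\]
so the proof reduces to obtaining high-probability bounds on $\|\bm{F}\|_F$ and $\|\bm{F}\|_2$ individually. For this I would invoke the standard HMT tail bounds \cite{rsvd} for the randomized SVD, applied to $\bm{B}$ with target rank $k$ and oversampling $p=k$ (where $k\ge 5$ makes the regularity conditions comfortably satisfied). Tuning the deviation parameters to constants and $\Theta(\sqrt{k})$ respectively produces inequalities of the form
\[
\|\bm{F}\|_F^2\;\le\;C_1\sum_{j>k}\sigma_j(\bm{B})^2,\qquad \|\bm{F}\|_2^2\;\le\;C_2\,\sigma_{k+1}(\bm{B})^2+\frac{C_3}{k}\sum_{j>k}\sigma_j(\bm{B})^2,
\]
each holding with failure probability at most $3 e^{-k}$.

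The final step is to translate these bounds into $\tr(\bm{A})$. Since $\sigma_j(\bm{B})^2=\lambda_j(\bm{A})$, two trivial inequalities suffice: $\sum_{j>k}\lambda_j(\bm{A})\le\tr(\bm{A})$, and the pigeonhole estimate $\lambda_{k+1}(\bm{A})\le\tr(\bm{A})/k$ (because the top $k$ eigenvalues of the PSD matrix $\bm{A}$ sum to at least $k\lambda_{k+1}$ and to at most $\tr(\bm{A})$). Substituting yields $\|\bm{F}\|_F\le C_1'\sqrt{\tr(\bm{A})}$ and $\|\bm{F}\|_2\le C_2'\sqrt{\tr(\bm{A})/k}$, and multiplying gives
\[
\|\bm{A}-\bm{A}\bm{\Omega}(\bm{\Omega}^T\bm{A}\bm{\Omega})^\dagger\bm{\Omega}^T\bm{A}\|_F\;\le\;\|\bm{F}\|_2\,\|\bm{F}\|_F\;\le\;\frac{C}{\sqrt{k}}\tr(\bm{A}),
\]
with a union bound across the four events controlling the total failure probability at $6 e^{-k}$.

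The main obstacle is not conceptual but bookkeeping: HMT's tail bounds involve several free deviation parameters, and producing an \emph{explicit} constant like $542$ requires a careful simultaneous choice so that (i) the $\sigma_{k+1}$ and tail-sum contributions to $\|\bm{F}\|_2$ collapse onto the common scale $\sqrt{\tr(\bm{A})/k}$, (ii) the deviation parameters in the Frobenius-norm tail remain moderate, and (iii) the total failure budget closes at $6 e^{-k}$. Any slack in one of the tail estimates gets \emph{multiplied} when forming the product $\|\bm{F}\|_2\|\bm{F}\|_F$, which is why the numerical constant is sensitive to the precise form of the HMT bounds one adopts.
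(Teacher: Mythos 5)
Your argument is correct and reaches the same conclusion with the same failure budget, but by a genuinely different route than the paper. The paper applies the structural Nystr\"om bound of Gittens--Mahoney (Lemma~\ref{lemma:structural}, their Theorem~3) directly to the Frobenius norm of the Nystr\"om residual, and then controls the two random quantities $\|\bm{\Lambda}_2^{1/2}\bm{\Psi}_2\bm{\Psi}_1^\dagger\|_2$ and $\|\bm{\Lambda}_2^{1/2}\bm{\Psi}_2\bm{\Psi}_1^\dagger\|_F$ via their Lemma~7, each holding with probability $1-3e^{-k}$; the trace bound then comes from the chain $\|\bm{\Lambda}_2\|_2 \le \|\bm{\Lambda}_2\|_F \le \sqrt{\|\bm{\Lambda}_2\|_2\|\bm{\Lambda}_2\|_*} \le \tr(\bm{A})/\sqrt{k}$. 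You instead pass to $\bm{B}=\bm{A}^{1/2}$, use the projector identity $\bm{A}-\bm{A}\bm{\Omega}(\bm{\Omega}^T\bm{A}\bm{\Omega})^\dagger\bm{\Omega}^T\bm{A}=\bm{F}^T\bm{F}$ with $\bm{F}=(\bm{I}-\bm{P}_{\bm{B\Omega}})\bm{B}$, decouple via $\|\bm{F}^T\bm{F}\|_F\le\|\bm{F}\|_2\|\bm{F}\|_F$, and control the two factors with the HMT deviation bounds (with oversampling $p=k$, taking $t=e$, $u=\sqrt{2k}$ to close the failure budget at $3e^{-k}$ per bound). Your route has the merit of reducing the Nystr\"om analysis to the textbook randomized-SVD bounds and making the $\tr(\bm{A})/\sqrt{k}$ scale emerge transparently from $\lambda_{k+1}\le\tr(\bm{A})/k$ and $\sum_{j>k}\lambda_j\le\tr(\bm{A})$; the paper's route avoids the square root and the explicit Gram-matrix step at the cost of invoking the more specialized Gittens--Mahoney lemmas. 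Both yield an explicit constant after the same kind of bookkeeping, though the exact numerical value would differ depending on which set of tail bounds and parameter choices one locks in; your count of ``four events'' is a slight miscount (two tail bounds, each with failure $2t^{-p}+e^{-u^2/2}=3e^{-k}$), but the total $6e^{-k}$ is right.
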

\noindent The proof of Lemma~\ref{lemma:nystrom_bound} builds on the following result.
\begin{lemma}[{\cite[Theorem 3]{gittensmahoney}}] \label{lemma:structural}
For a symmetric positive semidefinite matrix $\bm{A} \in \mathbb{R}^{n \times n}$ of rank at least $k$, let $\bm{A} = \bm{U} \bm{\Lambda} \bm{U}^T$
be a spectral decomposition  with the eigenvalues in non-increasing order on the diagonal of $\bm{\Lambda}$. Partition $\bm{U} = \begin{bmatrix} \bm{U}_1 & \bm{U}_2 \end{bmatrix}$ such that $\bm{U}_1 \in \mathbb{R}^{n \times k}$ and $\bm{\Lambda} = \begin{bmatrix} \bm{\Lambda}_1 & \\ & \bm{\Lambda}_2 \end{bmatrix}$ such that $\bm{\Lambda}_1 \in \mathbb{R}^{k \times k}$. Let $p \geq 1$ be an oversampling parameter and let $\bm{\Omega} \in \mathbb{R}^{n \times (k+p)}$ be such that $\bm{\Psi}_1 := \bm{U}_1^T \bm{\Omega}$ has full rank and define 
$\bm{\Psi}_2 := \bm{U}_2^T \bm{\Omega}$.
Then
\begin{equation}\label{eq:structural}
    \|\bm{A}-\bm{A}\bm{\Omega}(\bm{\Omega}^T\bm{A}\bm{\Omega})^{\dagger} \bm{\Omega}^T\bm{A}\|_F \leq \|\bm{\Lambda}_2 \|_F + \|\bm{\Lambda}_2^{1/2}\bm{\Psi}_2 \bm{\Psi}^{\dagger}_1\|_2\left(\sqrt{2\|\bm{\Lambda}_2\|_*} + \|\bm{\Lambda}^{1/2}_2\bm{\Psi}_2 \bm{\Psi}^{\dagger}_1\|_F\right).
\end{equation}
\end{lemma}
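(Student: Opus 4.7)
The plan is to apply Lemma~\ref{lemma:structural} with the oversampling choice $p = k$, so that $\bm{\Omega} \in \mathbb{R}^{n \times 2k}$ is partitioned into $\bm{\Psi}_1 = \bm{U}_1^T\bm{\Omega} \in \mathbb{R}^{k \times 2k}$ and $\bm{\Psi}_2 = \bm{U}_2^T\bm{\Omega} \in \mathbb{R}^{(n-k) \times 2k}$. Because $\bm{U} = [\bm{U}_1\ \bm{U}_2]$ is orthogonal, rotational invariance of the Gaussian distribution implies that $\bm{\Psi}_1$ and $\bm{\Psi}_2$ are \emph{independent} standard Gaussian matrices, so their norms can be analyzed separately. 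The rank-$k$ assumption in Lemma~\ref{lemma:structural} can be dealt with by a simple limiting argument (or by noting that $\bm{\Lambda}_2=0$ if $\text{rank}(\bm{A})<k$, in which case the bound is trivial).

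The next step is a deterministic estimate on $\bm{\Lambda}_2$ in terms of $\tr(\bm{A})$. Since $\bm{A}$ is positive semidefinite, $\lambda_{k+1}\leq \tr(\bm{A})/(k+1)$ by averaging, and
\begin{equation*}
\|\bm{\Lambda}_2\|_F^2 \;\leq\; \lambda_{k+1}\,\|\bm{\Lambda}_2\|_* \;\leq\; \frac{\tr(\bm{A})^2}{k+1}, \qquad \|\bm{\Lambda}_2\|_*\leq \tr(\bm{A}).
\end{equation*}
This handles the first term $\|\bm{\Lambda}_2\|_F \leq \tr(\bm{A})/\sqrt{k+1}$ on the right-hand side of~\eqref{eq:structural} and gives the crucial $1/\sqrt{k}$ scaling.

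For the remaining two terms I need three high-probability norm estimates on independent Gaussian matrices. First, using the HMT-type inequality $\mathbb{E}\|\bm{M}\bm{G}\|_2 \leq \|\bm{M}\|_F + \sqrt{2k}\,\|\bm{M}\|_2$ applied to $\bm{M}=\bm{\Lambda}_2^{1/2}$, together with concentration of $\|\cdot\|_2$ as a Lipschitz function of the Gaussian entries, I get $\|\bm{\Lambda}_2^{1/2}\bm{\Psi}_2\|_2 \lesssim \sqrt{\|\bm{\Lambda}_2\|_*}+\sqrt{k\lambda_{k+1}}\lesssim \sqrt{\tr(\bm{A})}$ on an event of probability at least $1-2e^{-k}$. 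Second, a chi-square tail bound (or Markov on $\|\bm{\Lambda}_2^{1/2}\bm{\Psi}_2\|_F^2$, whose mean equals $2k\|\bm{\Lambda}_2\|_*$) gives $\|\bm{\Lambda}_2^{1/2}\bm{\Psi}_2\|_F \lesssim \sqrt{k\,\tr(\bm{A})}$ on an event of probability at least $1-2e^{-k}$. Third, a Davidson–Szarek / Chen–Dongarra-type bound on the pseudoinverse of the $k\times 2k$ Gaussian $\bm{\Psi}_1$ yields $\|\bm{\Psi}_1^\dagger\|_2\lesssim 1/\sqrt{k}$ on an event of probability at least $1-2e^{-k}$.

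Combining these via $\|\bm{\Lambda}_2^{1/2}\bm{\Psi}_2\bm{\Psi}_1^\dagger\|_\bullet \leq \|\bm{\Lambda}_2^{1/2}\bm{\Psi}_2\|_\bullet \cdot \|\bm{\Psi}_1^\dagger\|_2$ for $\bullet\in\{2,F\}$ yields, on the intersection of the three events (of probability at least $1-6e^{-k}$ by a union bound),
\begin{equation*}
\|\bm{\Lambda}_2^{1/2}\bm{\Psi}_2\bm{\Psi}_1^\dagger\|_2 \lesssim \sqrt{\tr(\bm{A})/k},\qquad \|\bm{\Lambda}_2^{1/2}\bm{\Psi}_2\bm{\Psi}_1^\dagger\|_F \lesssim \sqrt{\tr(\bm{A})}.
\end{equation*}
Plugged into~\eqref{eq:structural} together with $\sqrt{2\|\bm{\Lambda}_2\|_*}\leq\sqrt{2\tr(\bm{A})}$, each of the three summands is of order $\tr(\bm{A})/\sqrt{k}$, and careful bookkeeping of the absolute constants produces the stated prefactor $542$. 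The main obstacle will be the constant-chasing: keeping the failure probabilities in each of the three Gaussian events at exactly $2e^{-k}$ while still keeping the multiplicative constants small enough to land at $542$ is delicate, but is a routine (if tedious) calibration once the three Gaussian ingredients are in place.
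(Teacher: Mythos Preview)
You are proving the wrong statement. The lemma you were given, Lemma~\ref{lemma:structural}, is a \emph{deterministic} structural bound on the Nystr\"om residual; it involves no randomness and no $542/\sqrt{k}$. The paper does not prove it at all---it is simply quoted from \cite[Theorem~3]{gittensmahoney}. What you have written is instead a proof sketch of Lemma~\ref{lemma:nystrom_bound} (the probabilistic $542/\sqrt{k}$ bound), which \emph{uses} Lemma~\ref{lemma:structural} as a black box. Your opening sentence ``The plan is to apply Lemma~\ref{lemma:structural}\ldots'' makes this explicit.

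If we regard your write-up as an attempt at Lemma~\ref{lemma:nystrom_bound}, it is essentially on the same track as the paper's proof: apply the structural bound~\eqref{eq:structural} with $p=k$, control $\|\bm{\Lambda}_2^{1/2}\bm{\Psi}_2\bm{\Psi}_1^\dagger\|_2$ and $\|\bm{\Lambda}_2^{1/2}\bm{\Psi}_2\bm{\Psi}_1^\dagger\|_F$ via Gaussian tail bounds, and close with the eigenvalue inequalities $\|\bm{\Lambda}_2\|_2\le\|\bm{\Lambda}_2\|_F\le\sqrt{\|\bm{\Lambda}_2\|_2\|\bm{\Lambda}_2\|_*}\le\tr(\bm{A})/\sqrt{k}$. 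The one substantive difference is that the paper does not split $\|\bm{\Lambda}_2^{1/2}\bm{\Psi}_2\bm{\Psi}_1^\dagger\|$ as $\|\bm{\Lambda}_2^{1/2}\bm{\Psi}_2\|\cdot\|\bm{\Psi}_1^\dagger\|_2$ and bound three independent Gaussian events; instead it invokes \cite[Lemma~7]{gittensmahoney} directly (with the specific parameter choices $p=k$, $t=e$, $u=\sqrt{2k}$), which yields the two mixed-norm bounds~\eqref{eq:2norm_bound}--\eqref{eq:frobeniusnorm_bound} each with probability $1-3e^{-k}$, and then takes a union over those two events to reach $1-6e^{-k}$. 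Your three-event decomposition is what that lemma does internally, so the approaches coincide up to packaging; the paper's route is shorter because the constant-chasing has already been done in \cite{gittensmahoney}, and this is precisely how the $542$ drops out without the ``delicate calibration'' you anticipate.
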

\begin{proof}[Proof of Lemma ~\ref{lemma:nystrom_bound}]
By proceeding as in the beginning of the proof of \cite[Lemma 7]{gittensmahoney} with probability at least $1-3e^{-k}$ we have\footnote{In the setting of~\cite[Lemma 7]{gittensmahoney}, set the quantities $p = k, t = e, u = \sqrt{2k}$ and $\bm{D} = \bm{\Lambda}_2^{1/2}$ to obtain~\eqref{eq:2norm_bound} and~\eqref{eq:frobeniusnorm_bound}.}
%We proceed as in~\cite[Lemma 7]{gittensmahoney} by noting that for a diagonal matrix $\bm{D}$, $\ell = k + p$, $p \geq 5$, $u,t \geq 1$. Then we have
%\begin{equation}\label{eq:tailbound_2norm}
%    \mathbb{P}\left(\|\bm{D}\bm{\Psi}_2 \bm{\Psi}_1^{\dagger}\|_2 > \|\bm{D}\|_2\left(\sqrt{\frac{3k}{p+1}}t + \frac{e\sqrt{l}}{p+1}tu\right) + \|\bm{D}\|_F \frac{e\sqrt{l}}{p+1}t\right) \leq 2t^{-p} + e^{-u^2/2}
%\end{equation}
%and
%\begin{equation}\label{eq:tailbound_frobeniusnorm}
%    \mathbb{P}\left(\|\bm{D}\bm{\Psi}_2 \bm{\Psi}_1^{\dagger}\|_F > \|\bm{D}\|_F\sqrt{\frac{3k}{p+1}}t + \|\bm{D}\|_2 \frac{e\sqrt{l}}{p+1}tu\right) \leq 2t^{-p} + e^{-u^2/2}
%\end{equation}
%In \eqref{eq:tailbound_2norm} and \eqref{eq:tailbound_frobeniusnorm} we set $k = p, t = e, u = \sqrt{2k}$ and $\bm{D} = \bm{\Lambda}_2^{1/2}$. Then with probability at least $1-2e^{-k}-e^{-k} = 1-3e^{-k}$ we have
\begin{align}
    \|\bm{\Lambda}_2^{1/2} \bm{\Psi}_2 \bm{\Psi}_1^{\dagger}\|_2 &\leq \|\bm{\Lambda}_2^{1/2}\|_2\left(\sqrt{\frac{3k}{k+1}}e + \frac{2e^2k}{k+1}\right) + \|\bm{\Lambda}_2^{1/2}\|_F \frac{e^2 \sqrt{2k}}{k+1} \nonumber \\ 
    &\leq \sqrt{\|\bm{\Lambda}_2\|_2} (\gamma_1 + \gamma_2) + \sqrt{\|\bm{\Lambda}_2\|_*} \frac{\gamma_3}{\sqrt{k}} \label{eq:2norm_bound}
\end{align}
by letting $\gamma_1 := \sqrt{3}e, \gamma_2 := 2e^2$ and $\gamma_3 := \sqrt{2}e^2$, where $\|\cdot\|_*$ denotes the nuclear norm defined in Section~\ref{sec:notation}. Similarly, we have with probability at least $1-3e^{-k}$
\begin{align}
    \|\bm{\Lambda}_2^{1/2} \bm{\Psi}_2 \bm{\Psi}_1^{\dagger}\|_F &\leq \|\bm{\Lambda}_2^{1/2}\|_F \sqrt{\frac{3k}{k+1}} t + \|\bm{\Lambda}_2^{1/2}\|_2 \frac{2e^2k}{k+1} \nonumber\\
    & \leq \sqrt{\|\bm{\Lambda}_2\|_*}\gamma_1 + \sqrt{\|\bm{\Lambda}_2\|_2} \gamma_2. \label{eq:frobeniusnorm_bound}
\end{align}
By the union bound, both~\eqref{eq:2norm_bound} and~\eqref{eq:frobeniusnorm_bound} hold simultaneously with probability at least $1-6e^{-k}$.

$\bm{\Psi}_1 \in \mathbb{R}^{k \times 2k}$ is a standard Gaussian matrix and therefore has full row rank almost surely. We may therefore apply Lemma~\ref{lemma:structural} combined with the bounds~\eqref{eq:2norm_bound} and~\eqref{eq:frobeniusnorm_bound}. Hence, with probability at least $1-6e^{-k}$ we have 
\begin{align*}
    &\|\bm{A}-\bm{A}\bm{\Omega}(\bm{\Omega}^T\bm{A}\bm{\Omega})^{\dagger} \bm{\Omega}^T\bm{A}\|_F \\
    \leq & \|\bm{\Lambda}_2 \|_F + \|\bm{\Lambda}_2^{1/2}\bm{\Psi}_2 \bm{\Psi}^{\dagger}_1\|_2\left(\sqrt{2\|\bm{\Lambda}_2\|_*} + \|\bm{\Lambda}^{1/2}_2\bm{\Psi}_2 \bm{\Psi}^{\dagger}_1\|_F\right)\\
    \leq& \|\bm{\Lambda}_2\|_F + \left(\sqrt{\|\bm{\Lambda}_2\|_2} (\gamma_1 + \gamma_2) + \sqrt{\|\bm{\Lambda}_2\|_*} \frac{\gamma_3}{\sqrt{k}}\right)\left(\sqrt{\|\bm{\Lambda}_2\|_*}(\sqrt{2}+\gamma_1) + \sqrt{\|\bm{\Lambda}_2\|_2} \gamma_2\right)\\
    = & \|\bm{\Lambda}_2\|_F + \sqrt{\|\bm{\Lambda}_2\|_2 \|\bm{\Lambda}_2\|_*}\tilde \gamma_1 
      + \|\bm{\Lambda}_2\|_2 \tilde \gamma_2 + \|\bm{\Lambda}_2\|_* \frac{\tilde \gamma_3}{\sqrt{k}} \\
\leq & \frac{1+\tilde \gamma_1+ \tilde \gamma_2 + \tilde \gamma_3}{\sqrt{k}} \tr(\bm{A})
\end{align*}
where we set 
\[
 \tilde \gamma_1 := (\gamma_1 + \gamma_2)(\sqrt{2} + \gamma_1) + \frac{\gamma_2\gamma_3}{\sqrt{k}},
 \ 
 \tilde \gamma_2 := (\gamma_1 + \gamma_2)\gamma_2,
 \ 
 \tilde \gamma_3 := \gamma_3(\sqrt{2} + \gamma_1)
\]
and use the norm inequalities
\begin{align*}
 & \|\bm{\Lambda}_2\|_2 \leq \|\bm{\Lambda}_2\|_F \leq  \sqrt{\|\bm{\Lambda}_2\|_2\|\bm{\Lambda}_2\|_*} \leq \frac{1}{\sqrt{k}} \|\bm{\Lambda}\|_* = \frac{1}{\sqrt{k}} \tr(\bm{A}) \\
 & \|\bm{\Lambda}_2\|_*\leq \|\bm{\Lambda}\|_* = \tr(\bm{A})
\end{align*}
in the last step. The proof is completed by noting that 
$1+\tilde \gamma_1+ \tilde \gamma_2 + \tilde \gamma_3 \le 542$.
% Furthermore, we have by norm equivalences and Hölder's inequality that where the final inequality follows from
% \begin{equation*}
%     k \|\bm{\Lambda}_2\|_2 \|\bm{\Lambda}_2\|_* \leq \|\bm{\Lambda}_1\|_*\|\bm{\Lambda}_2\|_* \leq \|\bm{\Lambda}\|_*^2
% \end{equation*}
% In addition, trivially $\|\bm{\Lambda}_2\|_*\leq \|\bm{\Lambda}\|_* = \tr(\bm{A})$. Hence, with probability at least $1-6e^{-k}$ we have
% \begin{align*}
%     \|\bm{A}-\bm{A}\bm{\Omega}(\bm{\Omega}^T\bm{A}\bm{\Omega})^{\dagger} \bm{\Omega}^T\bm{A}\|_F \leq \frac{g}{\sqrt{k}} \tr(\bm{A}),
% \end{align*}
% where 
% \begin{equation*}
%     g = 1 + \left((\gamma_1 + \gamma_2)(\sqrt{2} + \gamma_1) + \frac{\gamma_2\gamma_3}{\sqrt{5}}\right) + (\gamma_1 + \gamma_2)\gamma_2 + \gamma_3(\sqrt{2} + \gamma_1) \approx 542.
% \end{equation*}
 \end{proof}
 We can now proceed to extend the main result on Hutch++~\cite[Theorem 1.1]{hpp} to Nyström++.
 \begin{theorem}\label{theorem:npp}
 Suppose that 
 Algorithm~\ref{alg:npp} (Nyström++) is executed with 
 $m = O\big(\varepsilon^{-1}\sqrt{\log(\delta^{-1})} + \log(\delta^{-1})\big)$ matrix-vector products and $\delta \in (0,1/2)$\footnote{This condition on $\delta$ allows us to bound all $\log(p\delta^{-1})$ terms that would otherwise appear in the proof (see e.g. Lemma~\ref{lemma:combinedbound_gaussian} where the term $\log(2\delta^{-1})$ appears) from above with $c\log(\delta^{-1})$ for some sufficiently large constant $c$.}. Then its output satisfies 
 \begin{equation}\label{eq:relerr_nystrom}
     |\tr_m^{\mathsf{n++}}(\bm{A}) - \tr(\bm{A})| \leq \varepsilon \tr(\bm{A})
 \end{equation}
 with probability at least $1-\delta$.
\end{theorem}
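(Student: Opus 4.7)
\medskip

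\noindent\textbf{Proof proposal.} The plan is to mimic the structure of the Hutch++ proof in~\cite[Theorem 1.1]{hpp}, replacing the randomized SVD low-rank approximation with the Nyström approximation and invoking Lemma~\ref{lemma:nystrom_bound} in place of the corresponding randomized SVD bound. First, I would rewrite the algorithm output in a form that exposes the residual. Using the cyclic property of the trace one checks that
\[
 \tr\bigl((\bm{\Omega}^T\bm{X})^{\dagger}(\bm{X}^T\bm{X})\bigr)=\tr\bigl(\bm{A}\bm{\Omega}(\bm{\Omega}^T\bm{A}\bm{\Omega})^{\dagger}\bm{\Omega}^T\bm{A}\bigr)=\tr(\bm{A}_{\mathsf{Nyst}}),
\]
and that the second summand in line~\ref{line:return_npp} equals the Hutchinson estimator $\tr_{m/2}(\bm{A}-\bm{A}_{\mathsf{Nyst}})$ based on the Gaussian matrix $\bm{\Phi}$. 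Hence $\tr_m^{\mathsf{n++}}(\bm{A})-\tr(\bm{A})=\tr_{m/2}(\bm{R})-\tr(\bm{R})$, where $\bm{R}:=\bm{A}-\bm{A}_{\mathsf{Nyst}}$ is symmetric positive semidefinite and independent of $\bm{\Phi}$.

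\smallskip

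\noindent The second step is to control $\|\bm R\|_F$ via Lemma~\ref{lemma:nystrom_bound}. Partition the total budget as $m=2s+m_H$, where $\bm{\Omega}\in\mathbb{R}^{n\times 2s}$ is used for the Nyström approximation and $m_H=m/2$ for the Hutchinson phase. Taking $s=\Theta(\log(\delta^{-1}))$ large enough that $6e^{-s}\le \delta/2$ ensures
\[
 \|\bm R\|_F \le \frac{542}{\sqrt{s}}\tr(\bm A)
\]
with probability at least $1-\delta/2$. Conditionally on $\bm{R}$, Lemma~\ref{lemma:combinedbound_gaussian} applied to $\bm{B}=\bm{R}$ with (say) $\ell=1$ yields
\[
 |\tr_{m_H}(\bm R)-\tr(\bm R)| \le 2\sqrt{2}\sqrt{\frac{\log(2/\delta)}{m_H}}\,\|\bm R\|_F
\]
with probability at least $1-\delta/2$, provided $m_H\ge 8\log(2/\delta)$ (the side condition is automatic since $\rho(\bm{R})\ge 1$). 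The independence between $\bm\Phi$ and $\bm\Omega$ is what makes this clean conditioning argument legitimate, and a union bound gives joint validity with probability at least $1-\delta$.

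\smallskip

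\noindent Combining the two inequalities, the total error is at most
\[
 \frac{C\sqrt{\log(\delta^{-1})}}{\sqrt{s\,m_H}}\,\tr(\bm A)
\]
for an absolute constant $C$. To make this $\le \varepsilon\tr(\bm A)$ it suffices to take $s\,m_H\ge C^2\varepsilon^{-2}\log(\delta^{-1})$. Setting $s=m_H/2$ (or any constant ratio) and recalling the separate lower bounds $s\gtrsim\log(\delta^{-1})$, $m_H\gtrsim\log(\delta^{-1})$, the minimum admissible $m=2s+m_H$ is
\[
 m=O\bigl(\varepsilon^{-1}\sqrt{\log(\delta^{-1})}+\log(\delta^{-1})\bigr),
\]
as claimed. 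The main obstacle I anticipate is purely bookkeeping rather than technical: matching the proportions of $\bm{\Omega}$ and $\bm{\Phi}$ used by Algorithm~\ref{alg:npp} (which fixes the split at $m/2$--$m/2$) with the three simultaneous constraints on $s$ and $m_H$, and absorbing the assumption $\delta<1/2$ so that every $\log(c/\delta)$ factor can be absorbed into a single $\log(\delta^{-1})$ with a tuned constant. The remaining work---verifying the cyclic-property identities for the two output terms and checking $s\ge 5$---is routine.
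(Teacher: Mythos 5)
Your proposal is correct and follows essentially the same route as the paper: rewrite the Nyström++ output as the Hutchinson estimate of the Nyström residual, bound $\|\bm{R}\|_F$ via Lemma~\ref{lemma:nystrom_bound} with failure probability $\delta/2$, control the Hutchinson error via Lemma~\ref{lemma:combinedbound_gaussian} conditionally on $\bm{\Omega}$ with failure probability $\delta/2$, take a union bound, and then choose $m$ of the stated order. The only cosmetic difference is that you carry a free split $m = 2s + m_H$ before collapsing it to the algorithm's fixed $m/2$--$m/2$ allocation, whereas the paper works with $m/2$ throughout; the bookkeeping you flag as a potential obstacle resolves exactly as you anticipate.
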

%\begin{remark}
%\textcolor{red}{??? Avoid being repetitive. Also, there are no constants here; this is a bit confusing.  You could instead say that it matches \emph{asymptotic} result. ???}
%Theorem~\ref{theorem:npp} matches theis up to constants identical to the result for the Hutch++ algorithm presented in~\cite[Theorem 1.1]{hpp}. 
%\end{remark}
\begin{proof}%[Proof of Theorem \ref{theorem:npp}]
We follow the proof of~\cite[Theorem 1.1]{hpp}.
Let us first recall that 
$\bm{X} = \bm{A}\bm{\Omega}$, $\bm{Y} = \bm{A}\bm{\Phi}$ for $d\times m/2$ standard Gaussian random matrices $\bm{\Omega},\bm{\Phi}$ in Algorithm~\ref{alg:npp}. Throughout the proof, we assume that $m \ge c \log(\delta^{-1})$ for some (sufficiently large) constant $c$.

By Lemma~\ref{lemma:nystrom_bound}, there is a constant $C_1$ such  that
\begin{equation}
    \|\bm{A}-\bm{X}(\bm{\Omega}^T \bm{X})^{\dagger} \bm{X}^T\|_F \leq C_1 m^{-1/2} \tr(\bm{A}),
\end{equation}
with probability at least $1-\delta/2$. By Lemma~\ref{lemma:combinedbound_gaussian} there is a constant $C_2$ such that 
\begin{align*}
    &|\tr(\bm{A} - \bm{X}(\bm{\Omega}^T \bm{X})^{\dagger} \bm{X}^T) - \tr_{m/2}(\bm{A} - \bm{X}(\bm{\Omega}^T \bm{X})^{\dagger} \bm{X}^T)| \\
    &\leq C_2 m^{-1/2}\sqrt{\log(\delta^{-1})} \|\bm{A}-\bm{X}(\bm{\Omega}^T \bm{X})^{\dagger} \bm{X}^T\|_F.
\end{align*}
with probability at least $1-\delta/2$. By the union bound it holds with probability at least $1-\delta$ that
\begin{align*}
    |\tr_m^{\mathsf{n++}}(\bm{A})-\tr(\bm{A})| & = \big|\tr(\bm{A} - \bm{X}(\bm{\Omega}^T \bm{X})^{\dagger} \bm{X}^T) - \tr_{m/2}(\bm{A} - \bm{X}(\bm{\Omega}^T \bm{X})^{\dagger} \bm{X}^T)\big|\\
    &\leq  C_2 m^{-1/2}\sqrt{\log(\delta^{-1})} \|\bm{A}-\bm{X}(\bm{\Omega}^T \bm{X})^{\dagger} \bm{X}^T\|_F\\
    &\leq  C_1 C_2 m^{-1}\sqrt{\log(\delta^{-1})} \tr(\bm{A}).
\end{align*}
% for $C_4 = 2C_1C_3$. Note that for $\delta \in (0,1/2)$ we have $5\log(1/\delta) \geq \log(12/\delta)$ and $3\log(1/\delta) \geq \log(4/\delta)$. Hence, provided $m \geq O(\log(1/\delta))$ we have with probability at least $1-\delta$
% \begin{equation*}
%     |\tr_m^{\mathsf{n++}}(\bm{A})-\tr(\bm{A})| \leq C_5 \frac{\sqrt{\log(1/\delta)}}{m},
% \end{equation*}
% for and absolute constant $C_5$.
Hence, setting $m = O\big(\varepsilon^{-1}\sqrt{\log(\delta^{-1})} + \log(\delta^{-1})\big)$ implies the claim.
\end{proof}

\subsection{Adaptive Nyström++}
It is natural to aim at designing an adaptive version of Nyström++. Following A-Hutch++ we would need to find the minimum of 
\begin{equation}\label{eq:N_nystrom}
    m(r) = r + C(\varepsilon,\delta) \|\bm{A}-\bm{A}_{\text{n}}^{(r)}\|_F^2,
\end{equation}
where $\bm{A}_{\text{n}}^{(r)}$ is the rank-$r$ Nyström approximation.
Such an adaptive version clearly does not fit the streaming model. Moreover, we lose another advantage of Nyström++, that it only needs to perform $r$ matrix-vector products with $\bm{A}$ to get a rank-$r$ approximation, compared to $2r$ for the randomized SVD. Since we cannot compute $\|\bm{A}-\bm{A}_{\text{n}}^{(r)}\|_F^2$ we would need to decompose this term as done in~\eqref{eq:inprob_expansion}. This yields $\|\bm{A}-\bm{A}_{\text{n}}^{(r)}\|_F^2 = \|\bm{A}\|_F^2 - 2\tr(\bm{A}\bm{A}_{\text{n}}^{(r)}) + \|\bm{A}_{\text{n}}^{(r)}\|_F^2$. However, evaluating the term $- 2\tr(\bm{A}\bm{A}_{\text{n}}^{(r)}) + \|\bm{A}_{\text{n}}^{(r)}\|_F^2$ depending on $r$ requires additional matrix-vector products with $\bm{A}$. In summary, there is little advantage of using such an adaptive version of Nyström. 

% One could however use that $\mathbb{E}\left[\frac{1}{k}\|(\bm{A}-\bm{A}_{\text{nystrom}}^{(r)})\bm{\Psi}\|_F^2\right] = \|\bm{A}-\bm{A}_{\text{nystrom}}^{(r)}\|_F^2$ where $\bm{\Psi} \in \mathbb{R}^{n \times k}$ is a standard Gaussian matrix. Hence, one can use $\frac{1}{k}\|(\bm{A}-\bm{A}_{\text{nystrom}}^{(r)})\bm{\Psi}\|_F^2$ as a proxy for $\|\bm{A}-\bm{A}^{(r)}_{\text{nystrom}}\|_F^2$ in~\eqref{eq:N_nystrom} and reuse the multiplication $\bm{A}\bm{\Psi}$ to build a rank $r+k$ Nyström approximation. This algorithm performs reasonably good provided $k$ is sufficiently large ($k \approx 20$), but is outperformed by Algorithm~\ref{alg:adap_hpp_heuristic}. Hence, we omit it in the numerical experiments. 

\subsection{Numerical results}\label{sec:exp-nystrom}

To deal with potential numerical instabilities due to the appearance of the pseudoinverse in the Nyström approximation in line 3 of Algorithm~\ref{alg:npp}, in our implementation we use~\cite[Algorithm 16]{martinsson2020randomized}. This algorithm computes an eigenvalue decomposition $\bm{U} \bm{\Sigma} \bm{U}^T$ of the Nyström approximation of $\bm{A} + \nu \bm{I}$, where $\nu$ is a small shift, without explicitly forming the Nyström approximation. Once the eigenvalue decomposition is obtained the algorithm removes the shift by setting $\bm{\Lambda} = \max\left\{0,\bm{\Sigma}-\nu \bm{I}\right\}$ and returns $\bm{U} \bm{\Lambda} \bm{U}^T$, in factored form, as the stabilized Nyström approximation. The shift is set as $\nu = \sqrt{n} \texttt{eps}(\|\bm{A}\bm{\Omega}\|_2)$, where $\texttt{eps}(x)$ returns the distance to the next larger double precision floating point number to $x\in \mathbb{R}$ and $\bm{\Omega}$ is as in Algorithm \ref{alg:npp}. For further details, we refer to \cite{martinsson2020randomized,tropp2017fixed}.

We compare Nyström++ with Hutch++ and Single Pass Hutch++. We consider $m = 12 + 48k$ for $k \in \{0,1,2,\ldots,20\}$ and for each value of $m$ we run Hutch++, Single Pass Hutch++ and Nyström++ $100$ times each. 
We run the experiments on the matrices from Section~\ref{section:synthetic_matrices}, Section~\ref{section:estrada}, and Section~\ref{section:inverse}.  Moreover, we create two matrices with exponential decay, i.e. $\bm{A} = \bm{Q}\bm{\Lambda}\bm{Q}^T \in \mathbb{R}^{5000 \times 5000}$ where $\bm{Q}$ is a random orthogonal matrix and $\bm{\Lambda}$ is the diagonal matrix with entries $\bm{\Lambda}_{ii} = \exp(-i/s) $ for $i=1, \ldots, 5000$, 
% \begin{equation*}
%     \bm{\Lambda}_{ij} = \begin{cases} \exp(-\frac{i}{s}) \quad \text{if } i = j\\  0 \quad \text{otherwise} \end{cases}
% \end{equation*}
where $s$ is a parameter controlling the rate of the decay. We let $s = 10$ and $s= 100$. 

The results are displayed in Figures~\ref{fig:algebraic_nystrom},~\ref{fig:estrada_nystrom},~\ref{fig:inv_nystrom}, and~\ref{fig:exponential_nystrom}, respectively. 
In each figure, the blue line is the average relative error from Nyström++, the red line is the average relative error from Hutch++ and the green line is the average relative error from Single Pass Hutch++. The shaded blue area shows the $10^{\text{th}}$ to $90^{\text{th}}$ percentiles of the results from Nyström++, and the shaded red area shows the $10^{\text{th}}$ to $90^{\text{th}}$ percentiles of the results from Hutch++. 

In all cases we observe that Single Pass Hutch++ is the weakest alternative. Moreover, in many cases Hutch++ and Nyström++ have similar performances, and in some cases Nyström++ outperforms Hutch++, see e.g. Figure \ref{fig:exponential_nystrom}.

% following four types of symmetric positive semi-definite matrices. 
% \subsubsection{Synthetic matrices: Algebraic decay}
% The same matrices as in Section~\ref{section:synthetic_matrices}. The results are displayed in Figure~\ref{fig:algebraic_nystrom}. 
% \subsubsection{Synthetic matrices: Exponential decay}
% We create synthetic matrices with exponential decay, i.e. $\bm{A} = \bm{Q}\bm{\Lambda}\bm{Q}^T \in \mathbb{R}^{5000 \times 5000}$ where $\bm{Q}$ is a random orthogonal matrix and 
% \begin{equation*}
%     \bm{\Lambda}_{ij} = \begin{cases} \exp(-\frac{i}{s}) \quad \text{if } i = j\\  0 \quad \text{otherwise} \end{cases}
% \end{equation*}
% where $s$ is a parameter controlling the rate of the decay. We let $s = 100$ and $s= 1000$. For these examples Algorithm~\ref{alg:npp} performs slightly better than Algorithm~\ref{alg:hpp}. The results are displayed in Figure~\ref{fig:exponential_nystrom}.
% \subsubsection{Estrada index}
% The same matrix as in Section~\ref{section:estrada}. The results are displayed in Figure~\ref{fig:estrada_nystrom}.
% \subsubsection{Trace of inverse}
% The same matrices as in Section~\ref{section:inverse}. The results are displayed in Figure~\ref{fig:inv_nystrom}. 

\begin{figure}
\begin{subfigure}{.5\textwidth}
  \centering
  % include first image
  \includegraphics[width=\linewidth]{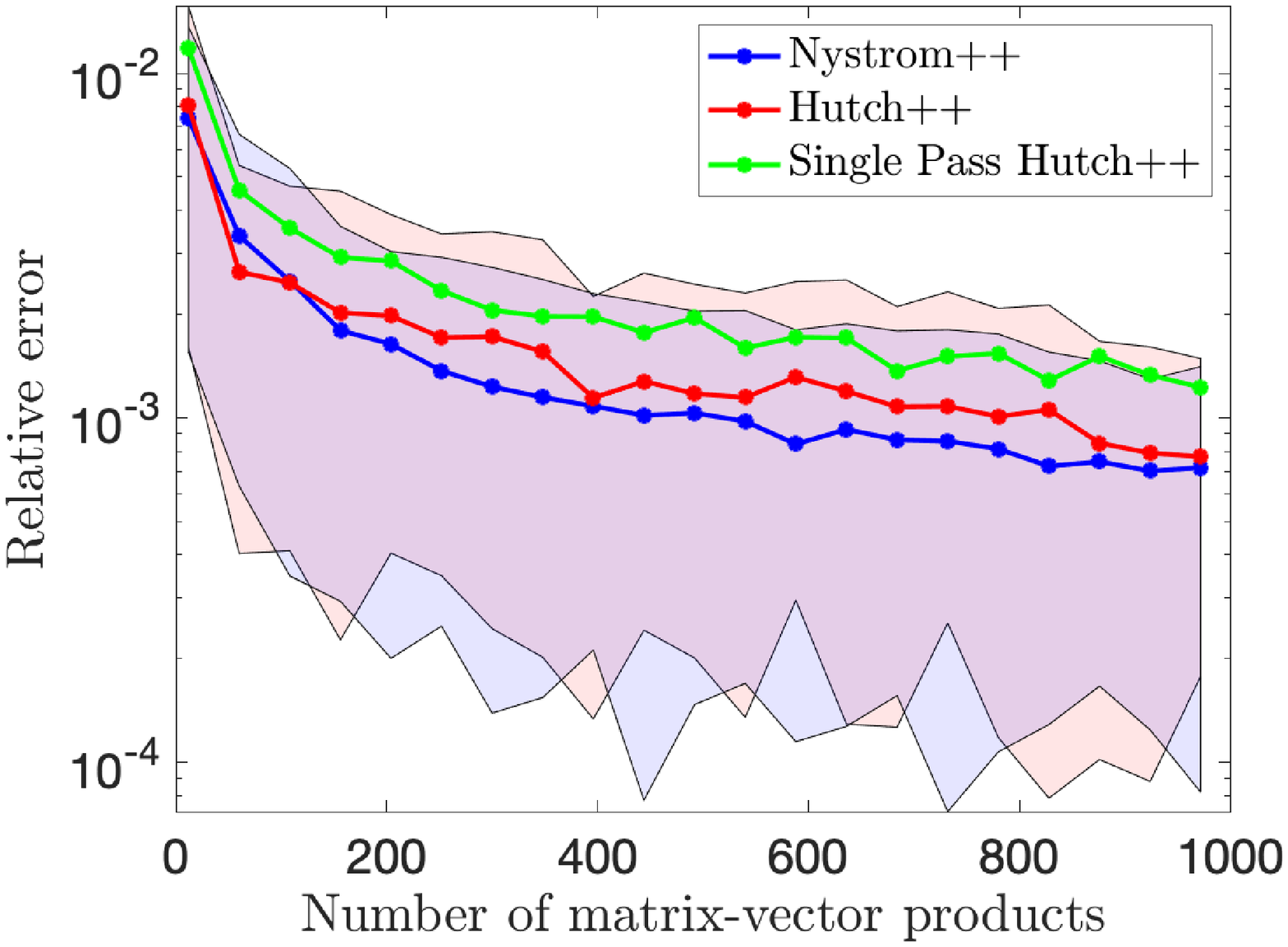}  
  \caption{$c = 0.1$}
  \label{fig:algebraic_c=01_nystrom}
\end{subfigure}
\begin{subfigure}{.5\textwidth}
  \centering
  % include second image
  \includegraphics[width=\linewidth]{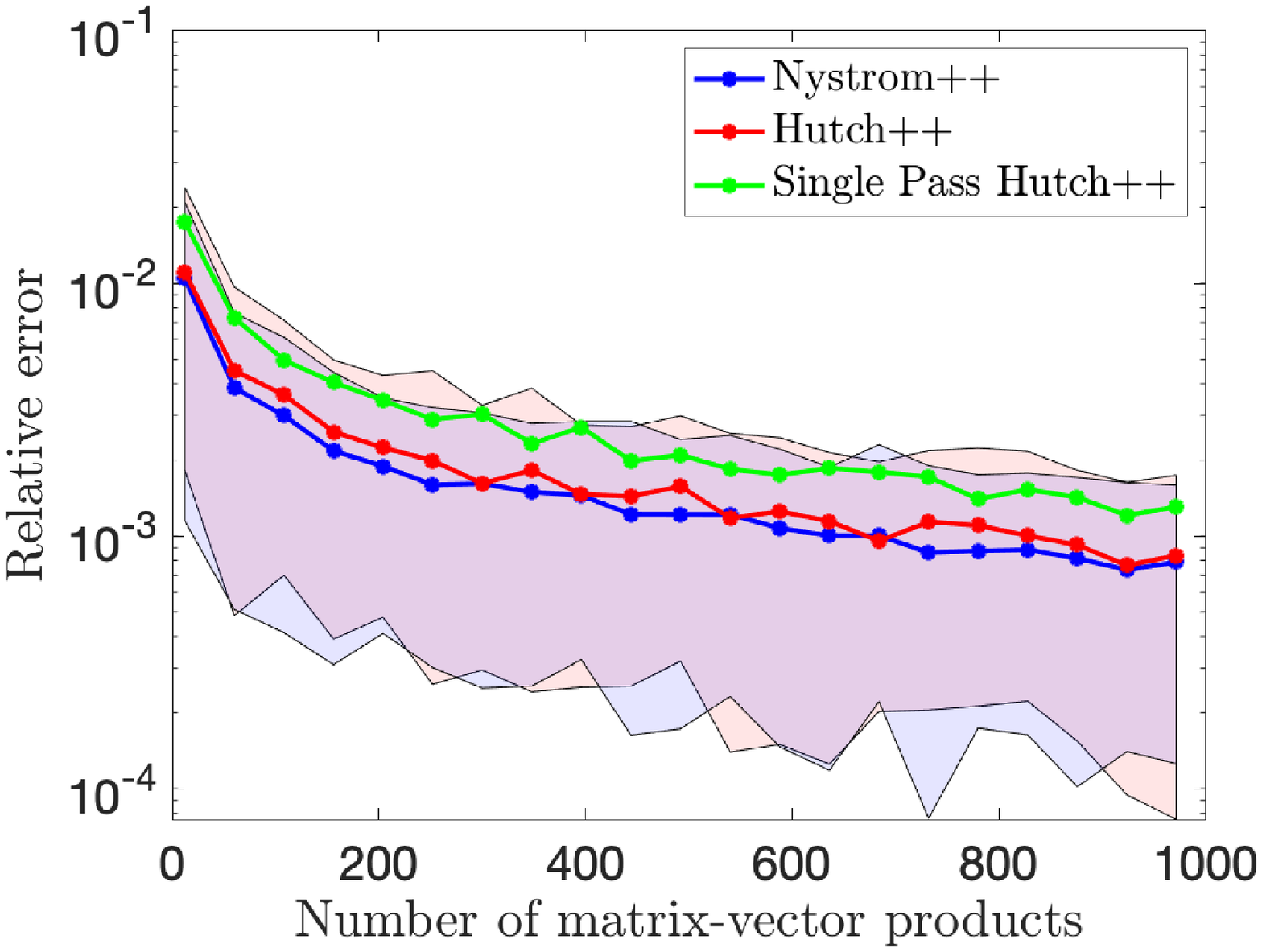}  
  \caption{$c=0.5$}
  \label{fig:algebraic_c=05_nystrom}
\end{subfigure}

%\newline

\begin{subfigure}{.5\textwidth}
  \centering
  % include third image
  \includegraphics[width=\linewidth]{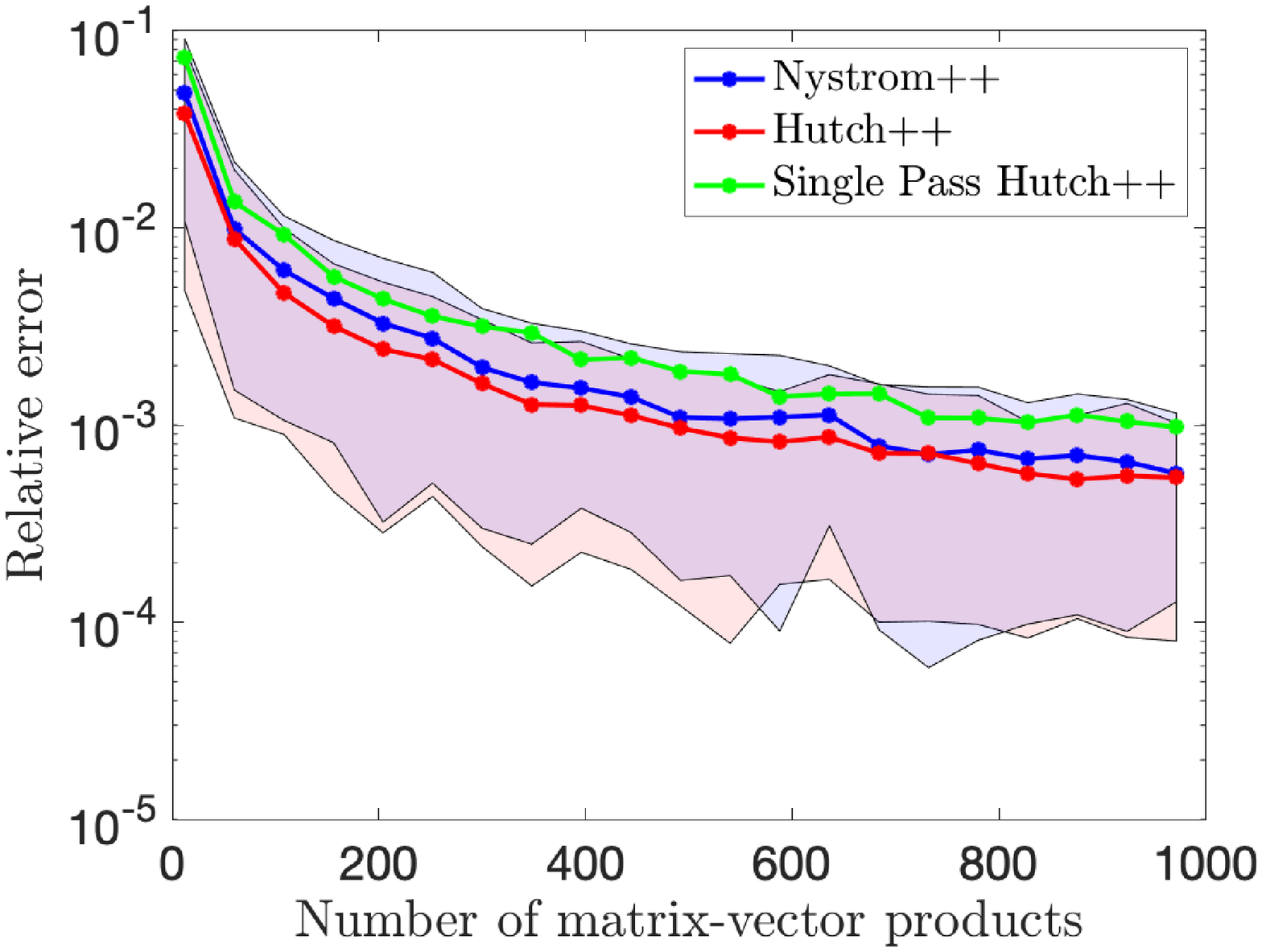}  
  \caption{$c=1$}
  \label{fig:algebraic_c=1_nystrom}
\end{subfigure}
\begin{subfigure}{.5\textwidth}
  \centering
  % include fourth image
  \includegraphics[width=\linewidth]{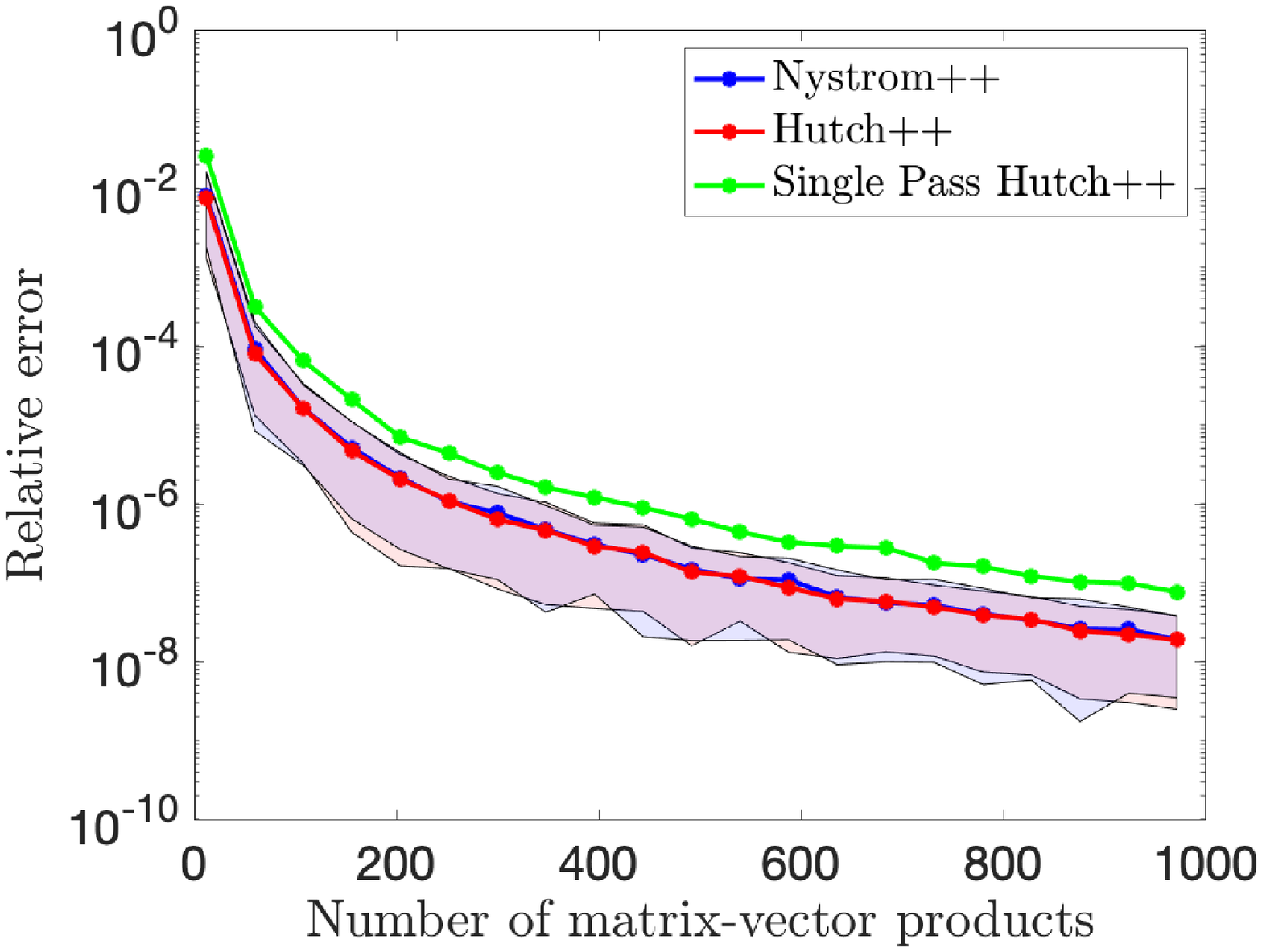}  
  \caption{$c=3$}
  \label{fig:algebraic_c=3_nystrom}
\end{subfigure}
\caption{Comparison of Hutch++, Single Pass Hutch++ and Nyström++ for the estimation of the trace of the synthetic matrices with algebraic decay described in Section~\ref{section:synthetic_matrices}.}
\label{fig:algebraic_nystrom}
\end{figure}

\begin{figure}
  \centering
  % include first image
  \includegraphics[width=.5\linewidth]{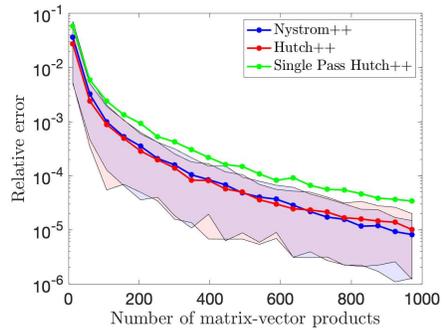}  
\caption{Comparison of Hutch++, Single Pass Hutch++ and Nyström++ for the estimation of the Estrada index as described in Section~\ref{section:estrada}.}
\label{fig:estrada_nystrom}
\end{figure}

\begin{figure}
\begin{subfigure}{.5\textwidth}
  \centering
  % include first image
  \includegraphics[width=\linewidth]{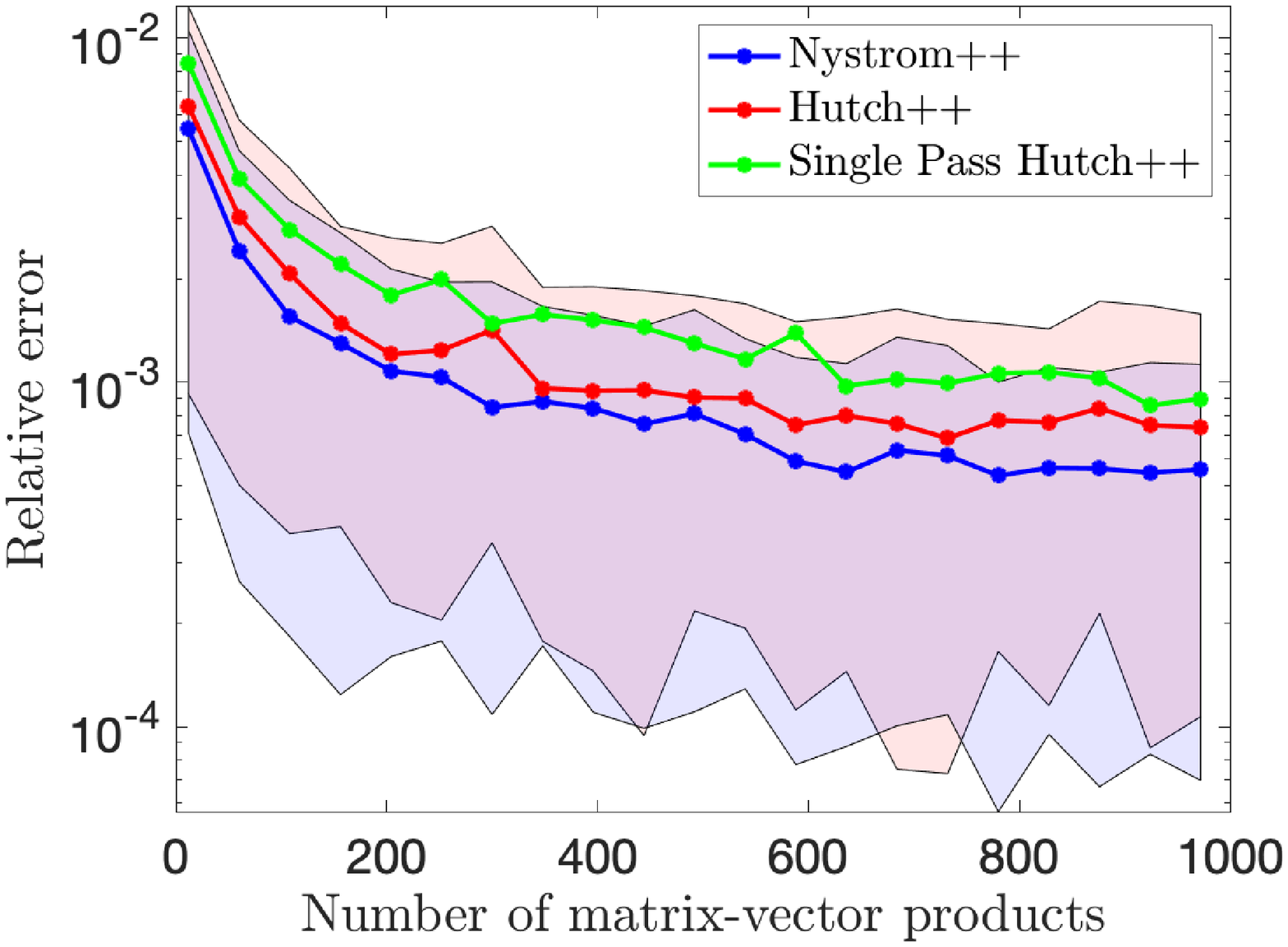}  
  \caption{Inverse of $\text{tridiag}(-1,4,-1)$.}
  \label{fig:tridiaginv_nystrom}
\end{subfigure}
\begin{subfigure}{.5\textwidth}
  \centering
  % include second image
  \includegraphics[width=\linewidth]{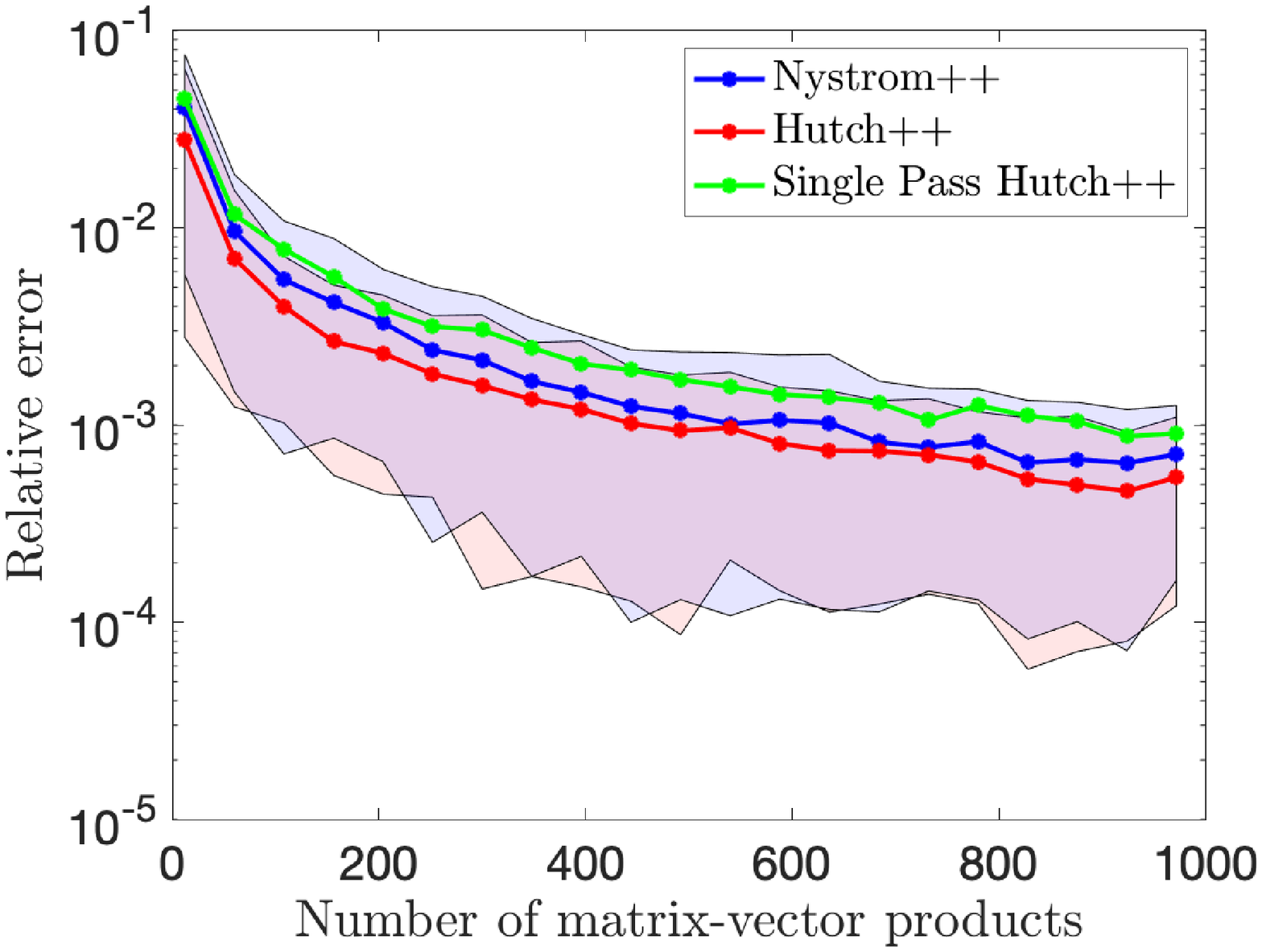}  
  \caption{Inverse of the matrix generated from discretizing Poisson's equation.}
  \label{fig:poissoninv_nystrom}
\end{subfigure}
\caption{Comparison of Hutch++, Single Pass Hutch++ and Nyström++ for the estimation of the trace of the inverse of the matrices described in Section~\ref{section:inverse}.}
\label{fig:inv_nystrom}
\end{figure}

\begin{figure}
\begin{subfigure}{.5\textwidth}
  \centering
  % include first image
  \includegraphics[width=\linewidth]{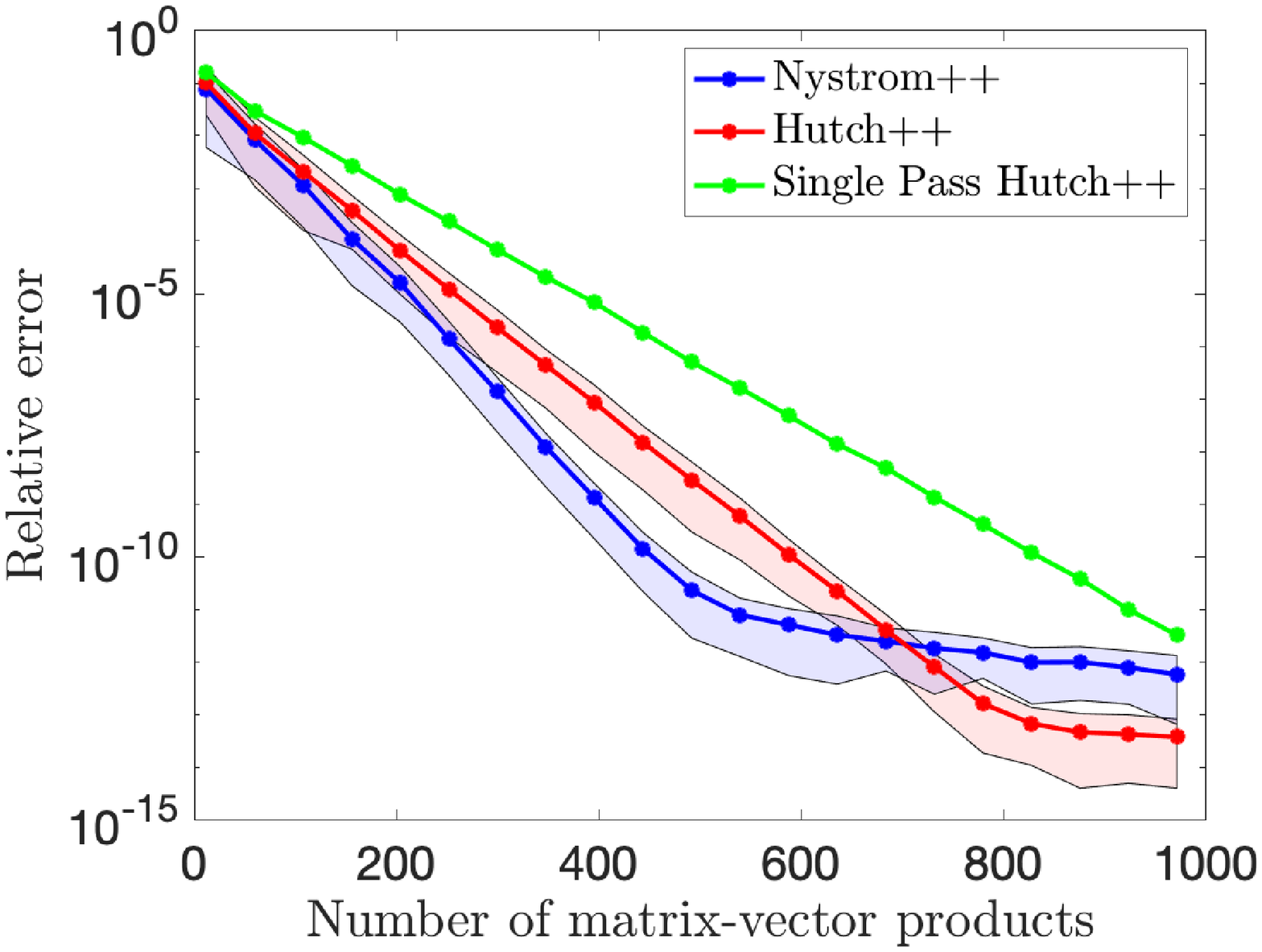}  
  \caption{$s=10$}
  \label{fig:exponential_s=10_nystrom}
\end{subfigure}
\begin{subfigure}{.5\textwidth}
  \centering
  % include second image
  \includegraphics[width=\linewidth]{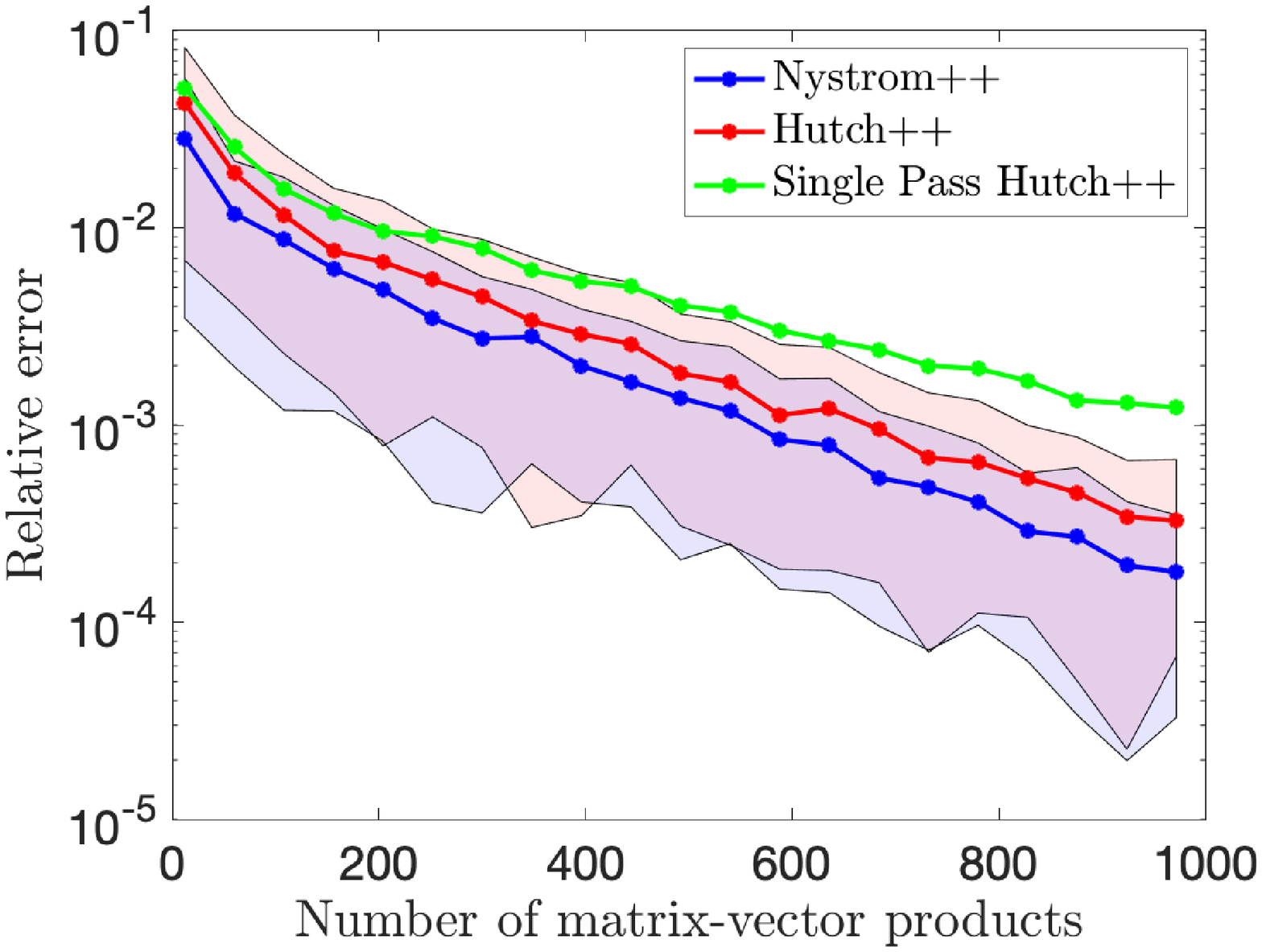}  
  \caption{$s=100$}
  \label{fig:exponential_s=100_nystrom}
\end{subfigure}
\caption{Comparison of Hutch++, Single Pass Hutch++ and Nyström++ for the estimation of the trace of the synthetic matrices with exponential decay described in Section~\ref{sec:exp-nystrom}.}
\label{fig:exponential_nystrom}
\end{figure}

%Section 5
%\input{numerical_results}

%Section 6
\section{Conclusion}
We have presented an adaptive version of Hutch++, A-Hutch++, that will estimate the trace of a symmetric matrix $\bm{A}$ while attempting to minimize the number of matrix-vector products with $\bm{A}$ used overall. This algorithm also comes with the advantage that the user does not need to determine the number of matrix-vector products required to output an estimate of the trace that is within the prescribed error tolerance. We have tested A-Hutch++ on a variety of examples and we found that A-Hutch++ in many cases provided some improvement and in any case, it did not require more matrix-vector products compared to Hutch++ to achieve the same error. Furthermore, we presented a version of Hutch++ utilizing the Nyström approximation, which requires only one pass over the matrix. We proved that this algorithm satisfies the same theoretical guarantees of Hutch++. While this algorithm offers a similar performance as Hutch++, it performs significantly better than the previously proposed single pass algorithm Single Pass Hutch++.
%\subsection*{Acknowledgements}

\begin{appendices}

%\begin{remark}
%A similar result can be shown via a Hanson-Wright ineqiality~\cite{hpp,hdp}. In this work we require the exact constats. Furthermore, The result in Lemma~\ref{lemma:combinedbound_gaussian} can be improved. Via a Chernoff bound one can achieve the following Hanson-Wright type inequality: Let $c \in (0,1/2)$ be arbitrary and define $C:= -\frac{1}{c}-\frac{\log(1-2c)}{2c^2}$. Then $\mathbb{P}\left(|\tr_m(\bm{B})-\tr(\bm{B})| \geq \varepsilon \right) \leq 2\exp\left(-m \min\left\{\frac{\varepsilon^2}{4C\|\bm{B}\|_F^2},\frac{c\varepsilon}{2\|\bm{B}\|_2}\right\}\right)$. Then, by the arguments in~\cite[Lemma 2]{hpp} one get $|\tr_m(\bm{B})-\tr(\bm{B})| \leq 2\sqrt{C} \sqrt{\frac{\log(2/\delta)}{m}}\|\bm{B}\|_F$ with probability at least $1-\delta$, provided $m \geq \frac{\log(2/\delta)}{c^2C\rho(\bm{B})}$. Furthermore, one can show that this bound is asymptotically optimal. This discussion is presented in the appendix. 
%\end{remark}

In this section we prove a version of the Hanson-Wright inequality and show that this is slightly stronger than the bound in~\cite[Theorem 1]{kressnercortinovis}. From this inequality we derive a version of Lemma \ref{lemma:combinedbound_gaussian}. We conclude with proving that this bound is asymptotically optimal.
\section{Hanson-Wright Inequality}\label{appendix:tailbound}
The Hanson-Wright Inequality we wish to prove is the following:
\begin{theorem}\label{theorem:gaussian}
Let $\bm{A} \in \mathbb{R}^{n \times n}$ be symmetric. Let $\bm{\omega}$ be a standard Gaussian vector of length $n$. Further, choose an arbitrary $c \in (0,1/2)$ and define $C = -\frac{1}{c}-\frac{\log(1-2c)}{2c^2}$. Then we have
\begin{equation*}
    \mathbb{P}\left(|\bm{\omega}^T \bm{A} \bm{\omega} - \tr(\bm{A})| \geq \varepsilon \right) \leq 2\exp\left(-\min\left\{\frac{\varepsilon^2}{4C\|\bm{A}\|_F^2}, \frac{c\varepsilon}{2\|\bm{A}\|_2}\right\}\right)
\end{equation*}
\end{theorem}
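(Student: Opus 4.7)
The plan is to follow the classical Hanson--Wright strategy: diagonalize, factor the MGF, bound the log-MGF by a quadratic, and Chernoff-optimize with a side constraint that is saturated on the linear regime. By the spectral theorem, write $\bm{A} = \bm{U}\bm{\Lambda}\bm{U}^T$ with $\bm{\Lambda} = \operatorname{diag}(\lambda_1,\ldots,\lambda_n)$. Because $\bm{\omega}$ is standard Gaussian and $\bm{U}$ is orthogonal, $\bm{U}^T\bm{\omega}\stackrel{d}{=}\bm{\omega}$, so
\[
\bm{\omega}^T\bm{A}\bm{\omega}-\tr(\bm{A}) \;\stackrel{d}{=}\; \sum_{i=1}^n \lambda_i(\omega_i^2-1),
\]
a sum of independent centred $\chi^2_1$ terms. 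Note also that the lower tail reduces to the upper tail applied to $-\bm{A}$, which has the same spectral and Frobenius norms; thus it suffices to prove the one-sided bound without the factor $2$, and then union-bound.

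For the upper tail, apply Markov's inequality to $e^{t(\bm{\omega}^T\bm{A}\bm{\omega}-\tr(\bm{A}))}$ with $t>0$. The identity $\mathbb{E}[e^{s(\omega^2-1)}]=e^{-s}/\sqrt{1-2s}$ for $s<1/2$ yields, by independence,
\[
\log \mathbb{E}\bigl[e^{t(\bm{\omega}^T\bm{A}\bm{\omega}-\tr(\bm{A}))}\bigr] = \sum_{i=1}^n \Bigl(-t\lambda_i - \tfrac{1}{2}\log(1-2t\lambda_i)\Bigr),
\]
valid whenever $2t\lambda_i < 1$ for all $i$, which is guaranteed by the constraint $t\|\bm{A}\|_2 \le c$ since $c<1/2$.

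The key analytic step is the quadratic bound
\[
-s - \tfrac{1}{2}\log(1-2s) \;\le\; C\,s^2 \qquad \forall\, s\in[-c,c],
\]
with $C = -\tfrac{1}{c} - \tfrac{\log(1-2c)}{2c^2}$. I would verify this by observing that the power series expansion $-s - \tfrac{1}{2}\log(1-2s) = \sum_{k\ge 2} 2^{k-1}s^k/k$ gives a function whose ratio to $s^2$ is monotonically non-decreasing on $[-c,c]$ (this is the forward direction of the standard $\chi^2$ log-MGF bound), so the maximum is attained at $s=c$ and equals exactly $C$. Applied termwise with $s_i = t\lambda_i$, and using $\sum_i \lambda_i^2 = \|\bm{A}\|_F^2$, this gives
\[
\mathbb{E}\bigl[e^{t(\bm{\omega}^T\bm{A}\bm{\omega}-\tr(\bm{A}))}\bigr] \;\le\; \exp\!\bigl(C t^2 \|\bm{A}\|_F^2\bigr), \qquad t\in(0, c/\|\bm{A}\|_2].
\]

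It remains to optimize $\exp(Ct^2\|\bm{A}\|_F^2 - t\varepsilon)$ over $t\in(0, c/\|\bm{A}\|_2]$. The unconstrained minimizer is $t^\star = \varepsilon/(2C\|\bm{A}\|_F^2)$. If $t^\star \le c/\|\bm{A}\|_2$, choosing $t=t^\star$ gives an exponent of $-\varepsilon^2/(4C\|\bm{A}\|_F^2)$. Otherwise, $t^\star > c/\|\bm{A}\|_2$ is equivalent to $2Cc\|\bm{A}\|_F^2/\|\bm{A}\|_2 < \varepsilon$; taking $t = c/\|\bm{A}\|_2$ makes the Frobenius term satisfy $Cc^2\|\bm{A}\|_F^2/\|\bm{A}\|_2^2 \le c\varepsilon/(2\|\bm{A}\|_2)$, so the exponent is at most $-c\varepsilon/(2\|\bm{A}\|_2)$. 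Taking the worse of the two (i.e., the minimum inside the exponent) and applying the union bound over the two tails yields the stated inequality with the factor $2$. The main obstacle is carrying out this case-analysis cleanly with the advertised constants; the monotonicity of $s\mapsto(-s-\tfrac12\log(1-2s))/s^2$ on $[-c,c]$ is the only non-routine ingredient, and the series expansion makes it transparent.
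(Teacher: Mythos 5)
Your proposal is correct and follows essentially the same route as the paper: diagonalize via orthogonal invariance to reduce to a weighted sum of independent centred $\chi^2_1$ variables, bound each scalar MGF factor $e^{-s}/\sqrt{1-2s}$ by $e^{Cs^2}$ on $|s|\le c$, and Chernoff-optimize over $t\in(0,c/\|\bm{A}\|_2]$ with the two-case analysis producing the $\min$ in the exponent. The paper packages the Chernoff step by verifying a sub-Exponential condition with parameters $(2C\|\bm{A}\|_F^2,\|\bm{A}\|_2/c)$ and then invoking a generic tail bound, but this is the same computation.

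One small caveat in your justification of the scalar inequality: the paper proves $e^{-s}/\sqrt{1-2s}\le e^{Cs^2}$ separately for $s\in[0,c]$ (Lemma~\ref{lemma:bound1}) and for $s\le 0$, where $e^{-s}/\sqrt{1-2s}\le e^{s^2}\le e^{Cs^2}$ since $C>1$ (Lemma~\ref{lemma:bound2} together with Lemma~\ref{lemma:C}). You instead assert that $g(s):=\bigl(-s-\tfrac12\log(1-2s)\bigr)/s^2=\sum_{j\ge0}\tfrac{2^{j+1}}{j+2}s^j$ is monotonically non-decreasing on all of $[-c,c]$ ``by the series expansion.'' Positivity of the coefficients only gives monotonicity immediately on $[0,c]$; on $[-c,0]$ the powers $s^j$ alternate in sign, so the series argument does not go through as stated. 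The claim is nonetheless true and can be checked by a short calculus argument (one shows $\phi(u)=u^2-1-2u\log u\le 0$ for $u\in(0,1)$ and $\ge 0$ for $u>1$, with $u=1-2s$), or one can simply observe, as the paper does, that $g(s)\le 1<C$ for $s\le 0$, which suffices. With that justification repaired, your argument is complete.
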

Theorem~\ref{theorem:gaussian} will be proved using tail bounds for sub-Exponential random variables. 
\begin{definition}[Sub-Exponential Random Variable]
A random variable $X$ is called sub-Exponential with parameters $\nu^2,\alpha >0$ if
\begin{equation*}
    \mathbb{E}\exp\left(t(X-\mathbb{E}X)\right) \leq \exp\left(\frac{t^2 \nu^2}{2}\right) \quad \text{for all } |t| \leq \frac{1}{\alpha}.
\end{equation*}
\end{definition}
For sub-Exponential random variables one has the following result, which follows from a Chernoff bound.
\begin{lemma}\label{lemma:sub-exponential}
(\cite[Proposition 2.9]{wainwright_hds}) If $X$ is a sub-Exponential random variable with parameters $(\nu^2,\alpha)$. Then
\begin{equation*}
    \mathbb{P}\left(|X-\mathbb{E}X| > \varepsilon\right) \leq 2 \exp\left(-\frac{1}{2}\min\left\{\frac{\varepsilon^2}{\nu^2}, \frac{\varepsilon}{\alpha}\right\}\right).
\end{equation*}
\end{lemma}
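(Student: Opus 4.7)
The plan is to show that $X := \bm{\omega}^T\bm{A}\bm{\omega} - \tr(\bm{A})$ is sub-Exponential with parameters $(\nu^2,\alpha) = (2C\|\bm{A}\|_F^2,\,\|\bm{A}\|_2/c)$; substituting these two parameters into Lemma~\ref{lemma:sub-exponential} reproduces \emph{exactly} the tail bound claimed in Theorem~\ref{theorem:gaussian}, so the whole task reduces to controlling the centered MGF of $X$. Diagonalizing $\bm{A} = \bm{U}\bm{\Lambda}\bm{U}^T$ with eigenvalues $\lambda_1,\ldots,\lambda_n$ and using rotational invariance of the standard Gaussian, I can write $X = \sum_{i=1}^n \lambda_i(z_i^2-1)$ with i.i.d.\ $z_i\sim N(0,1)$. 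The $\chi_1^2$ moment generating function $\mathbb{E}[e^{s(z_i^2-1)}] = e^{-s}/\sqrt{1-2s}$ (valid for $s<1/2$) combined with independence yields
\begin{equation*}
    \log\mathbb{E}[e^{tX}] = \sum_{i=1}^n f(t\lambda_i), \qquad f(s) := -s - \tfrac{1}{2}\log(1-2s),
\end{equation*}
for all $t$ with $\max_i t\lambda_i < 1/2$; in particular this holds throughout $|t|\le c/\|\bm{A}\|_2$, since $|\lambda_i|\le\|\bm{A}\|_2$ and $c<1/2$.

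The heart of the argument is then the scalar inequality
\begin{equation*}
    f(s) \le C s^2 \qquad \text{for every }s\in[-c,c].
\end{equation*}
Once established, summing over $i$ with $s=t\lambda_i$ gives $\log\mathbb{E}[e^{tX}] \le Ct^2\|\bm{A}\|_F^2 = t^2(2C\|\bm{A}\|_F^2)/2$ on the required range, which is exactly the sub-Exponential MGF bound with the two parameters above; by $C = f(c)/c^2$, equality holds at the endpoint $s=c$, so the constant is tight. I propose to verify this inequality by studying $g(s) := f(s) - Cs^2$. Direct differentiation yields
\begin{equation*}
    g'(s) = 2s\left(\frac{1}{1-2s} - C\right),
\end{equation*}
whose only zeros are $s=0$ and $s^* := (C-1)/(2C)$. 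A brief Taylor expansion of $-\log(1-2c)$ gives $C = 1 + 4c/3 + O(c^2) > 1$, so $s^*\in(0,1/2)$; a sign analysis of $g'$ then produces the shape: $g$ is strictly increasing on $(-\infty,0)$, strictly decreasing on $(0,s^*)$, and strictly increasing on $(s^*,1/2)$, with $g(0) = g(c) = 0$. Monotonicity on $(-\infty,0)$ immediately yields $g\le g(0)=0$ throughout $[-c,0]$. For $[0,c]$, a short argument by contradiction forces $s^* < c$: if instead $c \le s^*$, then $g$ would be strictly decreasing on $(0,c]$ and $g(c) < g(0) = 0$ would contradict $g(c)=0$. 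Hence on $[0,c]$ the function $g$ starts at $0$, dips to $g(s^*)<0$, and returns to $g(c)=0$, never exceeding $0$.

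The main obstacle I anticipate is precisely this scalar estimate. A naive Taylor bound at the origin would give only the loose constant $f''(0)/2 = 1 < C$, whereas the quadratic $Cs^2$ must absorb the super-quadratic blow-up of $f$ as $s\to 1/2^-$; the correct tight constant emerges only by combining the boundary equation $Cc^2 = f(c)$ with the global two-critical-point structure of $g$, rather than from any local expansion. Everything else in the proof—diagonalization, the $\chi_1^2$ MGF, independence, and the concluding application of Lemma~\ref{lemma:sub-exponential}—is routine.
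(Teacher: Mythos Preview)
Your proposal does not prove the stated Lemma~\ref{lemma:sub-exponential} at all; instead it \emph{uses} that lemma as a black box to prove Theorem~\ref{theorem:gaussian}. The lemma asserts a tail bound for an arbitrary sub-Exponential random variable with parameters $(\nu^2,\alpha)$, and its proof is the standard Chernoff argument: for $t\in(0,1/\alpha]$, Markov's inequality on $e^{tX}$ together with the MGF bound gives $\mathbb{P}(X-\mathbb{E}X>\varepsilon)\le \exp(-t\varepsilon+t^2\nu^2/2)$, and optimizing over $t$ (the unconstrained minimizer $t=\varepsilon/\nu^2$ is feasible when $\varepsilon\le\nu^2/\alpha$, otherwise one takes the boundary $t=1/\alpha$) yields the two branches of the minimum; the lower tail is symmetric. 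The paper does not write this out either---it simply cites \cite[Proposition~2.9]{wainwright_hds} and remarks that it ``follows from a Chernoff bound''---but that is the content you were asked to supply.

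What you have written is a correct and essentially complete sketch of the paper's proof of Theorem~\ref{theorem:gaussian}: diagonalize, reduce to the $\chi_1^2$ MGF, establish the scalar inequality $f(s)\le Cs^2$ on $[-c,c]$ (your $g$-analysis covers the content of Lemmas~\ref{lemma:bound1} and~\ref{lemma:bound2} in one stroke, and is arguably cleaner), and then invoke Lemma~\ref{lemma:sub-exponential}. So the mathematics is sound, just aimed at the wrong target.
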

In order to prove Theorem~\ref{theorem:gaussian} we require the following 3 lemmas. They are proved using basic calculus techniques are therefore omitted.
\begin{lemma}\label{lemma:C}
If $C = -\frac{1}{c}-\frac{\log(1-2c)}{2c^2}$ for $c \in (0,1/2)$, then $C > 1$ and $\lim\limits_{c \rightarrow 0^+} C = 1$.
\end{lemma}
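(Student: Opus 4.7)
The plan is to exploit the Taylor expansion of $\log(1-2c)$ around $c=0$, which converges on $(0,1/2)$ since $|2c|<1$. Writing
\[
  \log(1-2c) \;=\; -\sum_{k=1}^{\infty} \frac{(2c)^k}{k},
\]
I would substitute this into the definition of $C$ and observe that the $k=1$ term of the series, namely $-2c$, produces $+1/c$ after dividing by $2c^2$ with the prescribed sign, which precisely cancels the $-1/c$ sitting out front.

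After that cancellation, the remaining expression is
\[
  C \;=\; \frac{1}{2c^2}\sum_{k=2}^{\infty} \frac{(2c)^k}{k}
        \;=\; \sum_{k=2}^{\infty} \frac{2^{k-1} c^{k-2}}{k}
        \;=\; 1 + \frac{4c}{3} + 2c^2 + \cdots,
\]
where the $k=2$ term evaluates to exactly $1$ and every subsequent term is strictly positive for $c \in (0,1/2)$. Both claims follow immediately from this representation: the strict inequality $C>1$ holds because the tail $\sum_{k\ge 3} 2^{k-1} c^{k-2}/k$ is a sum of strictly positive terms on $(0,1/2)$, and $\lim_{c\to 0^+} C = 1$ holds because every term with $k\ge 3$ contains a positive power of $c$ and the series converges uniformly on any compact subinterval of $(0,1/2)$ (so one may pass to the limit termwise, or invoke the fact that a power series is continuous on its open interval of convergence).

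There is essentially no obstacle here: the only subtlety is to justify cancelling the $-1/c$ term against the divergent $k=1$ contribution, which requires the rearrangement to be done on the level of the power series (valid by absolute convergence for $|2c|<1$) rather than on the individual unbounded summands. An alternative, entirely equivalent route would be to apply L'Hôpital's rule twice to $\lim_{c\to 0^+}\bigl(-\frac{1}{c}-\frac{\log(1-2c)}{2c^2}\bigr)$, and to establish $C>1$ by showing $C'(c)>0$ on $(0,1/2)$ after differentiating, but the series approach is cleaner since it delivers both conclusions in one stroke.
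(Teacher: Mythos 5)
Your proof is correct. The paper omits the proof of this lemma (dismissing it as ``basic calculus''), but the authors' intended argument (visible in a commented-out block of the source) proceeds quite differently: the limit $C \to 1$ is established by three applications of L'H\^opital's rule, and the inequality $C > 1$ is established separately by rewriting it as $g(c) := -2c - \log(1-2c) - 2c^2 > 0$, noting $g(0) = 0$ and $g'(c) = \frac{4c}{1-2c} + 4c > 0$ on $(0,1/2)$, and concluding via Rolle's theorem. Your power-series route is a genuinely different argument and is arguably cleaner: expanding $\log(1-2c) = -\sum_{k\ge 1} (2c)^k/k$ and cancelling the $k=1$ term against the $-1/c$ gives
\begin{equation*}
C = \sum_{k=2}^{\infty} \frac{2^{k-1}c^{k-2}}{k} = 1 + \frac{4c}{3} + 2c^2 + \cdots,
\end{equation*}
from which both conclusions drop out simultaneously, since the leading term is exactly $1$ and the tail is a convergent series of strictly positive terms on $(0,1/2)$ that vanishes as $c \to 0^+$. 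You correctly flag the only delicate point, namely that the $1/c$ cancellation must be performed at the level of the absolutely convergent series rather than termwise on divergent pieces. The trade-off is minor: the paper's approach uses only elementary differentiation and avoids series manipulations, while yours delivers a single unified computation and, as a bonus, an explicit asymptotic expansion $C = 1 + \frac{4}{3}c + O(c^2)$.
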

\begin{comment}
\begin{proof} This follows from a repeated application of l'Hopital's rule
\begin{align*}
    &\lim\limits_{c \rightarrow 0^+} \left(-\frac{1}{c} - \frac{\log(1-2c)}{2c^2}\right) =\\
    &\lim\limits_{c \rightarrow 0^+}\frac{-2c - \log(1-2c)}{2c^2} =\\
    &\lim\limits_{c \rightarrow 0^+}\frac{-2 + \frac{2}{1-2c}}{4c} =\\
    &\lim\limits_{c \rightarrow 0^+}\frac{\frac{4}{(1-2c)^2}}{4} =\\
    &\lim\limits_{c \rightarrow 0^+} \frac{1}{(1-2c)^2} = 1
\end{align*}
It remains to show $C > 1$. 
\begin{align*}
    C &= \frac{-2c-\log(1-2c)}{2c^2} > 1 \Leftrightarrow\\
    g(c)&:= -2c-\log(1-2c) - 2c^2 > 0
\end{align*}
Note $g(0) = 0$ and
\begin{equation*}
    g'(c) = -2 + \frac{2}{1-2c} + 4c > 4c > 0 \quad \forall c \in (0,1/2)
\end{equation*}
Suppose there is $c^* \in (0,1/2)$ such that $g(c^*) = 0$, then by Rolle's theorem there is a $\xi \in (0,c^*)$ such that $g(\xi) = 0$, contradicting that $g'(c) > 0$ for $c \in (0,1/2)$. Hence, $g(c) > 0 \quad \forall c \in (0,1/2)$ which is equivalent to $C>0 \quad \forall c \in (0,1/2)$.
\end{proof}
\end{comment}
\begin{lemma}\label{lemma:bound1}
For $0 \leq x \leq c < \frac{1}{2}$ we have 
\begin{align*}
    \frac{\exp(-x)}{\sqrt{1-2x}} \leq \exp(Cx^2)
\end{align*}
where $C = - \frac{1}{c} - \frac{\log(1-2c)}{2c^2}$
\end{lemma}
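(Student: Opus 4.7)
The plan is to take logarithms of both sides, reducing the claim to the polynomial bound
\begin{equation*}
 f(x) := -x - \tfrac{1}{2}\log(1-2x) \le Cx^2, \qquad 0 \le x \le c.
\end{equation*}
A direct calculation gives $f(0) = 0$ and
\begin{equation*}
\frac{f(c)}{c^2} = -\frac{1}{c} - \frac{\log(1-2c)}{2c^2} = C,
\end{equation*}
so the inequality already holds with equality at both endpoints. It will therefore suffice to show that $x \mapsto f(x)/x^2$ is nondecreasing on $(0, c]$ (extended continuously by $1$ at $x = 0$), from which $f(x)/x^2 \le f(c)/c^2 = C$ on $[0, c]$ and hence $f(x) \le C x^2$ follow immediately.

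To establish this monotonicity I would Taylor-expand the logarithm. Since $x \in [0, c] \subset [0, 1/2)$, the series $-\log(1-t) = \sum_{k \ge 1} t^k/k$ converges at $t = 2x$, giving
\begin{equation*}
 -\tfrac{1}{2}\log(1-2x) = \sum_{k=1}^{\infty} \frac{2^{k-1} x^k}{k} = x + \sum_{k=2}^{\infty} \frac{2^{k-1} x^k}{k}.
\end{equation*}
The leading $x$ cancels the $-x$ in $f$, so
\begin{equation*}
 f(x) = \sum_{k=2}^{\infty} \frac{2^{k-1} x^k}{k}, \qquad \frac{f(x)}{x^2} = \sum_{k=2}^{\infty} \frac{2^{k-1} x^{k-2}}{k}.
\end{equation*}
The second representation is a power series in $x$ with strictly positive coefficients, hence a (strictly) increasing function on $[0, 1/2)$. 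Combining with the boundary identity $f(c)/c^2 = C$ yields the pointwise bound $f(x) \le Cx^2$ on $[0,c]$, and exponentiating returns the original inequality.

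The hard part will not be very hard: the argument is essentially a short power-series manipulation. The only points deserving a line of attention are the convergence of the series (which is ensured by $2c < 1$) and the observation that $C$ has been chosen precisely so that the quadratic bound is tight at $x = c$, which is what reduces the problem to a monotonicity check rather than requiring a more delicate estimate.
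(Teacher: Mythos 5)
Your argument is correct and genuinely different from the paper's (the paper's proof of Lemma~\ref{lemma:bound1} appears only in a commented-out block, but it is there). Both proofs begin by taking logarithms, reducing the claim to a sign statement about $Cx^2 + x + \tfrac12\log(1-2x)$. From there the paths diverge. The paper works with the derivative: it factors $f'(x) = 2Cx + 1 - \tfrac{1}{1-2x}$, locates the critical points $x=0$ and $x=(C-1)/(2C)$, argues via monotonicity on the two subintervals and Rolle's theorem that $f$ has exactly two zeros on $[0,1/2)$ and is positive strictly between them, and then notes that the stated $C$ is precisely the value making $f(c)=0$, i.e. $c$ is the second zero. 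You instead expand $-\tfrac12\log(1-2x)$ as a power series, observe that the linear term cancels the $-x$, and read off that $f(x)/x^2$ is a power series with nonnegative coefficients and is therefore nondecreasing on $[0,1/2)$; combined with the identity $f(c)/c^2 = C$, this gives $f(x)/x^2 \le C$ on $[0,c]$ immediately. Your route is shorter and more structurally transparent: it eliminates the need to find the interior critical point, the Rolle-type argument, and the need to separately verify $f\bigl((C-1)/(2C)\bigr) > 0$. It also makes the role of $C$ as a boundary equality completely explicit. The paper's approach yields slightly more information (it characterizes the full interval $(0,x^*)$ on which the inequality holds strictly, not just $[0,c]$), but that extra information is not used.
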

\begin{lemma}\label{lemma:bound2}
For $x \geq 0$ we have $\frac{\exp(x)}{\sqrt{1+2x}} \leq \exp(x^2) \leq \exp(Cx^2)$ where $C$ as in Lemma \ref{lemma:bound1}. Furthermore, for $x \in (-1/2,0]$ we have $\frac{\exp(x)}{\sqrt{1+2x}} \geq \exp(x^2)$. In particular, for $x \in [0,1/2)$ we have $\frac{\exp(-x)}{\sqrt{1-2x}} \geq \exp(x^2)$.
\end{lemma}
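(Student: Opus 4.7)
The plan is to reduce all three inequalities in the statement to a single monotonicity fact about the function
\[ g(x) := x^2 - x + \tfrac{1}{2}\log(1+2x), \quad x > -1/2. \]
After taking logarithms of both sides (valid since everything in sight is positive on $(-1/2,\infty)$), the bound $\exp(x)/\sqrt{1+2x} \leq \exp(x^2)$ becomes $g(x) \geq 0$, and $\exp(x)/\sqrt{1+2x} \geq \exp(x^2)$ becomes $g(x) \leq 0$. So it suffices to pin down the sign of $g$.

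The key computation is the derivative. Starting from $g'(x) = 2x - 1 + \frac{1}{1+2x}$ and combining over a common denominator,
\[ g'(x) = \frac{(2x-1)(1+2x) + 1}{1+2x} = \frac{4x^2}{1+2x}. \]
For $x > -1/2$ the denominator is positive and the numerator is nonnegative, with equality only at $x=0$, so $g$ is strictly increasing on $(-1/2,\infty)$. Combined with $g(0) = 0$, this immediately yields $g(x) \geq 0$ for $x \geq 0$ and $g(x) \leq 0$ for $x \in (-1/2,0]$, which gives the first stated inequality and the "Furthermore" part. The remaining middle bound $\exp(x^2) \leq \exp(Cx^2)$ is immediate from Lemma \ref{lemma:C}, which asserts $C > 1$.

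For the final "In particular" claim, the substitution $y = -x$ with $x \in [0,1/2)$, so $y \in (-1/2,0]$, converts the "Furthermore" inequality $\exp(y)/\sqrt{1+2y} \geq \exp(y^2)$ into $\exp(-x)/\sqrt{1-2x} \geq \exp(x^2)$, as required. There is no serious obstacle: the only non-routine step is spotting the algebraic collapse of $g'(x)$ to $4x^2/(1+2x)$, which is precisely what lets a single sign analysis of $g$ handle both directions simultaneously.
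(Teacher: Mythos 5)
Your proof is correct and takes essentially the same approach as the paper's (which is omitted from the published version as ``basic calculus'' but appears as a comment in the source): both establish the sign of a single auxiliary function by showing its derivative collapses to a perfect square over a positive denominator, then argue by monotonicity from the value at the boundary point where the function vanishes. The paper substitutes $y=1+2x$ and studies $f(y)=2\log y+y^2-4y+3$ with $f'(y)=\tfrac{2}{y}(y-1)^2$, while you work directly with $g(x)=x^2-x+\tfrac{1}{2}\log(1+2x)=\tfrac{1}{4}f(1+2x)$ and $g'(x)=\tfrac{4x^2}{1+2x}$, so the two arguments agree up to a linear change of variable, with yours being a touch more streamlined.
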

We can now proceed to prove Theorem~\ref{theorem:gaussian}. 
\begin{proof}[Proof of Theorem \ref{theorem:gaussian}]
Let $X = \bm{\omega}^T \bm{A} \bm{\omega}$ where $\omega \sim N(\bm{0},\bm{I})$. We will show that $X$ is sub-Exponential with parameters $(2C\|\bm{A}\|_F^2,\frac{\|\bm{A}\|_2}{c})$. The final result will follow from Lemma~\ref{lemma:sub-exponential}.

Note that since $\bm{A}$ is symmetric we have
\begin{equation*}
    \bm{A} = \bm{Q} \bm{\Lambda} \bm{Q}^T
\end{equation*}
where $\bm{Q}$ is orthogonal and
\begin{equation*}
    \bm{\Lambda} = \begin{bmatrix} \bm{\Lambda}_+ & & \\
    & \bm{0} & \\
    & & \bm{\Lambda}_{-} \end{bmatrix}
\end{equation*}
where $\bm{\Lambda}_{\pm} = \text{diag}(\pm\lambda_1^{\pm},\cdots,\pm\lambda_{l_{\pm}}^{\pm})$ where $\lambda^{\pm}_{i} > 0 \quad \forall i = 1,\cdots,l_{\pm}$.\\\\
Note that by unitary invariance of Gaussian vectors we have
\begin{equation*}
    X-\mathbb{E}X = \bm{\omega}^T \bm{A} \omega-\tr(\bm{A}) \stackrel{d}{=} \sum\limits_{i=1}^{l_{+}} \lambda_i^{+}((\omega^{+}_i)^2-1) - \sum\limits_{j=1}^{l_{-}} \lambda_j^{-}((\omega^{-}_i)^2-1)
\end{equation*}
Hence,
\begin{equation*}
    \mathbb{E}\exp(t(X-\mathbb{E}X)) = \left(\prod\limits_{i=1}^{l_+} \mathbb{E} \exp(t\lambda_i^{+}((\omega^{+}_i)^2-1))\right)\left(\prod\limits_{j=1}^{l_-} \mathbb{E} \exp(-t\lambda_j^{-}((\omega^{-}_j)^2-1))\right)
\end{equation*}
Note that
\begin{equation}\label{eq:MGF}
    \mathbb{E}\exp(\pm t\lambda_k^{\pm}((\omega_k^{\pm})^2-1)) = \frac{\exp(\mp\lambda_k^{\pm}t)}{\sqrt{1\mp2\lambda_k^{\pm}t}}
\end{equation}
provided $\pm t \lambda_k^{\pm} < 1/2$.

By Lemma~\ref{lemma:bound1} and \ref{lemma:bound2} we have
\begin{equation*}
    \frac{\exp(\mp\lambda_k^{\pm}t)}{\sqrt{1-2\lambda_k^{+}t}} \leq \exp(C(\lambda_k^{\pm})^2 t^2), \quad |t\lambda_i^{\pm}| \leq c < 1/2
\end{equation*}
%and by Lemma \ref{lemma:bound2} we have
%\begin{equation*}
%    \frac{\exp(\lambda_k^{-}t)}{\sqrt{1+2\lambda_k^{-}t}} \leq \exp((\lambda_k^{-})^2 t^t) \leq \exp(C(\lambda_k^{-})^2 t^2), \quad t\geq0
%\end{equation*}
Hence, by inserting this into \eqref{eq:MGF} we get
\begin{align*}
    \mathbb{E}\exp(t(X-\mathbb{E}X)) &\leq \left(\prod\limits_{i=1}^{l_+}\exp(Ct^2(\lambda_i^{+})^2)\right)\left(\prod\limits_{j=1}^{l_-} \exp(Ct^2(\lambda_j^{-})^2)\right)\\
    & =\exp(Ct^2 \|\bm{A}\|_F^2)
\end{align*}
provided
\begin{equation*}
    |t| \leq \frac{c}{\|\bm{A}\|_2}
\end{equation*}
Hence, $X$ is sub-Exponential with parameters $\left(2C\|\bm{A}\|_F^2, \frac{\|\bm{A}\|_2}{c}\right)$.
\end{proof}
The following corollary can be proved via the diagonal embedding trick \cite[Theorem 1]{kressnercortinovis}.
\begin{corollary}\label{corollary:diagembedding}
Let $\bm{A}$ be as in Theorem \ref{theorem:gaussian}. Let $\tr_m(\bm{A})$ be the stochastic trace estimator with $m$ samples of i.i.d. standard Gaussian vectors. Then we have
\begin{align*}
    \mathbb{P}(|\tr_m(\bm{A})-\tr(\bm{A})| &\geq \varepsilon) \leq 2\exp\left(-m\min\left\{\frac{\varepsilon^2}{4C\|\bm{A}\|_F^2}, \frac{c\varepsilon}{2\|\bm{A}\|_2}\right\}\right)%\\
    %\mathbb{P}(|H_N(\bm{A})-\tr(\bm{A})| &\geq \varepsilon) \leq 2\exp\left(-N\min\left\{\frac{\varepsilon^2}{4C\|\bm{A}-\bm{D}\|_F^2}, \frac{c\varepsilon}{2\|\bm{A}-\bm{D}\|_2}\right\}\right)
\end{align*}
\end{corollary}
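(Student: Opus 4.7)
The plan is to reduce the bound for the $m$-sample stochastic estimator to the single-sample quadratic-form bound already established in Theorem~\ref{theorem:gaussian} by stacking everything into one big Gaussian quadratic form, which is exactly the ``diagonal embedding trick'' referenced in~\cite[Theorem 1]{kressnercortinovis}.

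Concretely, I would introduce the block-diagonal matrix
\[
\tilde{\bm{A}} := \bm{I}_m \otimes \bm{A} \;=\; \begin{bmatrix} \bm{A} & & \\ & \ddots & \\ & & \bm{A} \end{bmatrix} \in \mathbb{R}^{mn \times mn},
\]
and the concatenated Gaussian vector $\bm{x} := [\bm{x}_1^T,\ldots,\bm{x}_m^T]^T \in \mathbb{R}^{mn}$, which, by independence of the $\bm{x}_i$, is itself a standard Gaussian vector of length $mn$. A direct block computation then gives the key identity
\[
\bm{x}^T \tilde{\bm{A}}\bm{x} = \sum_{i=1}^m \bm{x}_i^T \bm{A}\bm{x}_i = m\,\tr_m(\bm{A}), \qquad \tr(\tilde{\bm{A}}) = m\,\tr(\bm{A}),
\]
so that $m\bigl(\tr_m(\bm{A})-\tr(\bm{A})\bigr) = \bm{x}^T \tilde{\bm{A}}\bm{x} - \tr(\tilde{\bm{A}})$.

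Next I would record the two elementary norm identities
\[
\|\tilde{\bm{A}}\|_F^2 = m\,\|\bm{A}\|_F^2, \qquad \|\tilde{\bm{A}}\|_2 = \|\bm{A}\|_2,
\]
the first because the Frobenius norm squared of a block-diagonal matrix is the sum of the blockwise Frobenius norms squared, the second because the singular values of $\tilde{\bm{A}}$ are those of $\bm{A}$ repeated $m$ times. Applying Theorem~\ref{theorem:gaussian} to the symmetric matrix $\tilde{\bm{A}}$ with deviation parameter $m\varepsilon$ and substituting these identities,
\[
\mathbb{P}\bigl(|\bm{x}^T \tilde{\bm{A}}\bm{x} - \tr(\tilde{\bm{A}})| \geq m\varepsilon\bigr) \leq 2\exp\left(-\min\left\{\frac{m^2\varepsilon^2}{4C\cdot m\|\bm{A}\|_F^2}, \frac{c\,m\varepsilon}{2\|\bm{A}\|_2}\right\}\right),
\]
and factoring $m$ out of both arguments of the minimum yields the claimed bound after rewriting the left-hand side as $\mathbb{P}(|\tr_m(\bm{A})-\tr(\bm{A})|\geq \varepsilon)$.

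There is no real obstacle here; the only thing to be careful about is the scaling, since the sub-Gaussian (Frobenius) term gets a factor of $m$ out front because $\|\tilde{\bm{A}}\|_F^2$ grows linearly in $m$ while the deviation enters quadratically, whereas the sub-Exponential ($\|\cdot\|_2$) term is only linear in $m$ because the spectral norm is unchanged by the embedding. Both effects conspire to produce a single factor of $m$ in the exponent, which is exactly what the corollary asserts.
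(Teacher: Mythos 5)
Your argument is correct and is exactly the approach the paper has in mind: the text explicitly says the corollary follows "via the diagonal embedding trick" of~\cite[Theorem 1]{kressnercortinovis}, and you have filled in the details of that reduction correctly, including the key scaling observation that $\|\tilde{\bm{A}}\|_F^2 = m\|\bm{A}\|_F^2$ while $\|\tilde{\bm{A}}\|_2 = \|\bm{A}\|_2$, so the two terms in the minimum each acquire a single factor of $m$ after substituting the deviation $m\varepsilon$.
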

One can now show that Theorem~\ref{theorem:gaussian} is slightly stronger than the corresponding tailbound shown in~\cite[Lemma 4]{kressnercortinovis}. 
\begin{lemma}
For all $\bm{A}$ as in Theorem~\ref{theorem:gaussian} and $\varepsilon > 0$ there exists $c \in (0,1/2)$ such that 
\begin{equation*}
    \min\left\{ \frac{\varepsilon^2}{4C\|\bm{A}\|_F^2}, \frac{c\varepsilon}{2\|\bm{A}\|_2}\right\} > \frac{\varepsilon^2}{4(\|\bm{A}\|_F^2 + \varepsilon \|\bm{A}\|_2)}
\end{equation*}
\end{lemma}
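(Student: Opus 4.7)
Let me set $F := \|\bm{A}\|_F^2$, $N := \|\bm{A}\|_2$, and introduce the dimensionless parameter $\mu := \varepsilon N / F > 0$. A direct comparison shows that the first entry of the minimum exceeds the right-hand side iff $C(c) < 1 + \mu$, while the second entry exceeds the right-hand side iff $c > \mu/[2(1+\mu)]$. Thus the whole statement reduces to finding a single $c \in (0,1/2)$ for which both of these inequalities hold simultaneously. The plan is to verify that the borderline value
\[
c_* := \frac{\mu}{2(1+\mu)} \in (0,1/2)
\]
already satisfies $C(c_*) < 1 + \mu$ strictly; then since $C$ is continuous in $c$, any $c$ slightly larger than $c_*$ works for both inequalities at once.

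The only real computation is showing $C(c_*) < 1 + \mu$. For this particular $c_*$ we have $1 - 2c_* = 1/(1+\mu)$, so $\log(1-2c_*) = -\log(1+\mu)$, and
\[
C(c_*) = -\frac{1}{c_*} - \frac{\log(1-2c_*)}{2c_*^2} = -\frac{2(1+\mu)}{\mu} + \frac{2(1+\mu)^2\log(1+\mu)}{\mu^2}.
\]
Multiplying the inequality $C(c_*) < 1 + \mu$ through by $\mu^2/(1+\mu) > 0$ and simplifying reduces it to the one-variable statement
\[
(1+\mu)\log(1+\mu) < \mu + \tfrac{1}{2}\mu^2, \qquad \mu > 0.
\]

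This elementary inequality is the crux, and it is handled by calculus: set $h(\mu) := \mu + \mu^2/2 - (1+\mu)\log(1+\mu)$, note $h(0)=0$, and differentiate to obtain $h'(\mu) = \mu - \log(1+\mu)$, which is strictly positive for $\mu>0$ by the standard $\log(1+x) < x$ bound. Hence $h(\mu) > 0$ for $\mu>0$, giving $C(c_*) < 1 + \mu$ strictly. By continuity of $C$ on $(0,1/2)$, there exists $\eta>0$ with $C(c) < 1 + \mu$ for all $c \in [c_*, c_* + \eta)$; choose any such $c > c_*$ and both required inequalities hold, proving the lemma. There is no substantive obstacle — the content is simply that the free parameter $c$ in Theorem~\ref{theorem:gaussian} can be tuned to match the scale $\mu = \varepsilon\|\bm{A}\|_2/\|\bm{A}\|_F^2$, and this tuning is sharp enough that even the borderline choice $c_*$ already clears the bound from \cite{kressnercortinovis}.
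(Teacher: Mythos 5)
Your proof is correct and lands on essentially the same key fact as the paper's. Both arguments reduce the claim to a one-variable calculus inequality, and these inequalities are in fact the same up to a change of variable: the paper chooses $c$ implicitly through the equation $Cc = \frac{\varepsilon\|\bm{A}\|_2}{2\|\bm{A}\|_F^2}$ (which requires observing that $Cc$ ranges continuously over $(0,\infty)$), and then asserts that $C(1-2c) < 1$ for all $c \in (0,1/2)$; substituting $u = 1-2c$ and clearing denominators, the statement $C(1-2c) < 1$ is exactly your inequality $(1+\mu)\log(1+\mu) < \mu + \tfrac12\mu^2$ after setting $u = 1/(1+\mu)$. Your version is arguably a cleaner writeup: the borderline value $c_* = \mu/(2(1+\mu))$ is explicit, the reduction to $C(c_*) < 1+\mu$ is transparent, and the final calculus step ($h(0)=0$, $h'(\mu) = \mu - \log(1+\mu) > 0$) is spelled out rather than left implicit. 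The only mild cost is the extra continuity step to nudge $c$ slightly above $c_*$, whereas the paper's implicit choice makes both defining inequalities hold directly; but that is a matter of presentation, not substance.
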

\begin{proof}
Let $x = \frac{\|\bm{A}\|_F^2}{\varepsilon^2}$ and $y = \frac{\|\bm{A}\|_2}{\varepsilon}$. Hence, we need to show that there exists $c \in (0,1/2)$ such that 
\begin{equation}\label{eq:inequality}
    \min\left\{ \frac{1}{2Cx}, \frac{c}{y}\right\} > \frac{1}{2(x + y)}
\end{equation}
Note that for any $z > 0$ there is $c \in (0,1/2)$ such that $Cc = z$. Hence, choose $c$ such that $Cc = \frac{y}{2x} \Leftrightarrow x = \frac{y}{2Cc}$. This choice will guarantee \eqref{eq:inequality} since $1 > C(1-2c)$.
\end{proof}
One also has the following version of Lemma~\ref{lemma:combinedbound_gaussian}. 
\begin{lemma}\label{lemma:combinedbound_gaussian2}
Let $\bm{A} \in \mathbb{R}^{n \times n}$ be symmetric with stable rank $\rho(\bm{A})$. Let $\tr_m(\bm{A})$ be the stochastic trace estimator \eqref{eq:hutchinson} with $m$ matrix-vector multiplies with i.i.d. standard Gaussian random vectors. Let $c \in (0,\frac{1}{2})$ be arbitrary and define $C = -\frac{1}{c} - \frac{\log(1-2c)}{2c^2}$. Then, if $m \geq \frac{\log(2/\delta)}{c^2 C \rho(\bm{A})}$ we have that 
\begin{equation}
    |\tr_m(\bm{A})-\tr(\bm{A})| \leq 2 \sqrt{C} \sqrt{\frac{\log(2/\delta)}{m}}\|\bm{A}\|_F
\end{equation}
holds with probability at least $1-\delta$.
\end{lemma}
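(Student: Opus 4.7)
The plan is to mimic the proof of Lemma~\ref{lemma:combinedbound_gaussian}, but applied to the sharper two-term tail bound provided by Corollary~\ref{corollary:diagembedding} rather than~\eqref{eq:tailbound_kressnercortinovis}. Specifically, I would substitute
\[
    \varepsilon := 2\sqrt{C}\sqrt{\frac{\log(2/\delta)}{m}}\|\bm{A}\|_F
\]
into the tail bound of Corollary~\ref{corollary:diagembedding} and verify that the resulting probability upper bound is at most $\delta$.

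The first step is a direct algebraic check: with this choice of $\varepsilon$, the ``Gaussian'' term inside the minimum becomes
\[
    \frac{\varepsilon^{2}}{4C\|\bm{A}\|_F^{2}} = \frac{\log(2/\delta)}{m},
\]
so if this term is the active one, then the tail bound evaluates exactly to $2\exp(-\log(2/\delta)) = \delta$, which is what we want. The main (and only) task is therefore to show that, under the assumption $m\ge \log(2/\delta)/(c^{2}C\rho(\bm{A}))$, the Gaussian term is indeed no larger than the sub-exponential term $c\varepsilon/(2\|\bm{A}\|_{2})$.

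The inequality
\[
    \frac{\varepsilon^{2}}{4C\|\bm{A}\|_F^{2}} \;\le\; \frac{c\varepsilon}{2\|\bm{A}\|_{2}}
\]
simplifies to $\varepsilon \le 2Cc\,\|\bm{A}\|_F^{2}/\|\bm{A}\|_{2}$. Plugging in the chosen $\varepsilon$, squaring, and rearranging yields precisely
\[
    m \;\ge\; \frac{\log(2/\delta)}{c^{2}C\,\rho(\bm{A})},
\]
recalling that $\rho(\bm{A}) = \|\bm{A}\|_F^{2}/\|\bm{A}\|_{2}^{2}$. Thus the hypothesis on $m$ is exactly what is needed to guarantee that the first term dominates, and the desired inequality follows. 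There is no genuine obstacle here — the only delicate point is keeping track of the constants so that the side condition reproduces cleanly — which is why this argument is stated as a lemma rather than a theorem.
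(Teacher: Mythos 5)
Your proposal is correct and is essentially the argument the paper intends. The paper's own proof is terse (it simply says to set $\varepsilon = 2\sqrt{C}\sqrt{\log(2/\delta)/m}\,\|\bm{A}\|_F$ and proceed as in Lemma~\ref{lemma:combinedbound_gaussian}), and you have filled in the algebra: the hypothesis $m \ge \log(2/\delta)/(c^2 C \rho(\bm{A}))$ is exactly equivalent to the Gaussian term being the active one in the minimum of Corollary~\ref{corollary:diagembedding}, after which the bound evaluates to $2\exp(-\log(2/\delta)) = \delta$.
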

\begin{proof}
In the setting of Corollary~\ref{corollary:diagembedding} let $\varepsilon = 2\sqrt{C}\sqrt{\frac{\log(2/\delta)}{m}}\|\bm{A}\|_F$ and choose $m$ sufficiently large. Prooceed as in the proof of Lemma~\ref{lemma:combinedbound_gaussian}.
\end{proof}
\section{Tight constants}
In this section we prove that the smallest possible constant $\gamma$ such that
\begin{align*}
    |\tr_m(\bm{A})-\tr(\bm{A})| &\leq \gamma \sqrt{\frac{\log(2/\delta)}{m}}\|\bm{A}\|_F%\\
    %|\tr_N^R(\bm{A})-\tr(\bm{A})| &\leq \gamma \sqrt{\frac{\ln\left(\frac{2}{\delta}\right)}{N}}\|\bm{A}-\bm{D}\|_F \quad \bm{D}:= \text{diag}(a_{11},\cdots,a_{nn})
\end{align*}
holds with probability at least $1-\delta$, is $\gamma=2$. If we let $c \rightarrow 0$ in Lemma \ref{lemma:combinedbound_gaussian2} we note that $C \rightarrow 1$. So we expect that we have an upper bound of $\gamma = 2$. Lemma \ref{lemma:tightness} implies that $\gamma = 2$ is the lower bound, and Lemma \ref{lemma:combinedbound_gaussian2} implies that it can be asymptotically reached. 
\begin{lemma} \label{lemma:tightness}
Let $\bm{A}$ be symmetric and $\gamma < 2$. Then, $\exists \delta^* > 0$ s.t. for all sufficiently large $N$ we have
\begin{align*}
    \mathbb{P}\left(|\tr_{N}(\bm{A})-\tr(\bm{A})| \leq \gamma \sqrt{\frac{\log(2/\delta^*)}{N}} \|\bm{A}\|_F\right)& < 1-\delta^*%\\
     %\mathbb{P}\left(|H_{N}(\bm{A})-\tr(\bm{A})| \leq \gamma \sqrt{\frac{\ln(2/\delta^*)}{N}} \|\bm{A}-\bm{D}\|_F\right)& < 1-\delta^*
\end{align*}
%where $\bm{D}$ is the diagonal part of $\bm{A}$
\end{lemma}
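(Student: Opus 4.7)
The plan is to reduce to a Gaussian tail computation via the central limit theorem, and then show that no constant strictly less than $2$ can beat the standard Gaussian tail lower bound at small $\delta^*$. First I would fix any nonzero symmetric $\bm{A}$ (so that $\|\bm{A}\|_F > 0$) and observe that the summands $\bm{x}_i^T\bm{A}\bm{x}_i-\tr(\bm{A})$ are i.i.d.\ with mean $0$ and variance $2\|\bm{A}\|_F^2$. Consequently the normalized estimator
\[
Y_N := \sqrt{N/2}\,\frac{\tr_N(\bm{A})-\tr(\bm{A})}{\|\bm{A}\|_F}
\]
satisfies $Y_N \xrightarrow{d} W \sim N(0,1)$ as $N \to \infty$. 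With the change of variable $t := (\gamma/\sqrt{2})\sqrt{\log(2/\delta^*)}$, the event whose probability we need to upper-bound becomes exactly $\{|Y_N| \le t\}$, so the claim reduces to producing $\delta^*>0$ with $\mathbb{P}(|W|>t) > \delta^*$ and then invoking the CLT at the continuity point $t$.

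To achieve the latter I would invoke the standard Mills-ratio lower bound $\mathbb{P}(|W|>t) \ge \sqrt{2/\pi}\,\frac{t}{t^2+1} e^{-t^2/2}$, noting that
\[
e^{-t^2/2} \;=\; \exp\!\left(-\tfrac{\gamma^2}{4}\log(2/\delta^*)\right) \;=\; (\delta^*/2)^{\gamma^2/4}.
\]
Since $\gamma < 2$ forces $\gamma^2/4 < 1$, the ratio $(\delta^*/2)^{\gamma^2/4}/\delta^*$ is a positive power of $1/\delta^*$ and therefore diverges as $\delta^* \to 0^+$, while the Mills prefactor $t/(t^2+1)$ decays only like $1/\sqrt{\log(1/\delta^*)}$. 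Hence for all sufficiently small $\delta^*$ the strict inequality $\mathbb{P}(|W|>t) > \delta^*$ holds, and I fix such a $\delta^*$.

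Finally, convergence in distribution of $Y_N$ to $W$ at the continuity point $t$ gives $\mathbb{P}(|Y_N|\le t) \to \mathbb{P}(|W|\le t) < 1-\delta^*$, so the strict inequality $\mathbb{P}(|Y_N|\le t) < 1-\delta^*$ persists for all sufficiently large $N$, which is exactly the claim of the lemma. There is no serious obstacle: the only thing to check carefully is that the polylogarithmic Mills prefactor cannot erase the power-of-$1/\delta^*$ gap, which is immediate from $\gamma^2/4 < 1$. A minor point worth noting in the write-up is that the argument works for any fixed nonzero symmetric $\bm{A}$, because the CLT only requires the second moment of $\bm{x}^T\bm{A}\bm{x}$ to be finite and the normalization $\|\bm{A}\|_F$ to be nonzero; if one prefers an entirely explicit example, the scalar case $\bm{A}=[1]$ makes $N\cdot \tr_N(\bm{A})$ literally $\chi^2_N$ and the same Mills-ratio argument goes through.
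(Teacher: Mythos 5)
Your proof is correct and it follows the same overall strategy as the paper — reduce via the central limit theorem to a statement about the standard Gaussian, then show that for sufficiently small $\delta^*$ the two-sided Gaussian tail beyond $t=(\gamma/2)\sqrt{2\log(2/\delta^*)}$ strictly exceeds $\delta^*$. Where you differ is in how you establish that last Gaussian-tail inequality. The paper sets $p(\delta):=\erf\bigl(\tfrac{\gamma}{2}\sqrt{\log(2/\delta)}\bigr)$, observes $p(\delta)\to 1$ as $\delta\to 0^+$, computes $p'(\delta)$ and shows $p'(\delta)\to -\infty$, and then invokes a small auxiliary calculus lemma (Lemma~\ref{lemma:infinity}) to conclude $p(\delta)<1-\delta$ in a neighbourhood of zero. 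You instead apply the classical Mills-ratio lower bound $\mathbb{P}(|W|>t)\ge\sqrt{2/\pi}\,\tfrac{t}{t^2+1}e^{-t^2/2}$ and note that $e^{-t^2/2}=(\delta^*/2)^{\gamma^2/4}$ with $\gamma^2/4<1$, so the power $(\delta^*)^{\gamma^2/4-1}$ diverges and swamps the polylogarithmic prefactor. Your route is arguably cleaner: it is entirely explicit, it avoids proving a dedicated auxiliary lemma, and it makes the role of the condition $\gamma<2$ transparent (it is precisely what makes $\gamma^2/4<1$). The paper's derivative argument is perhaps closer in spirit to showing that the bound $1-\delta$ is asymptotically achieved with equality as $\gamma\to 2$, which is thematically useful given that the section is about identifying $\gamma=2$ as the sharp constant, but either computation is fully adequate to prove the lemma as stated.
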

For this we need the following lemma
\begin{lemma}\label{lemma:infinity} Let $f:(0,1) \mapsto \mathbb{R}$ be a continuously differentiable function and $\lim\limits_{x \to 0^+} f(x) = 1$. Suppose $\lim\limits_{x \rightarrow 0^+}f'(x) = -\infty$. Then, $\exists \varepsilon > 0$ s.t. $x \in (0,\varepsilon) \Rightarrow f(x) < 1-x$.\end{lemma}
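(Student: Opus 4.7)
The plan is to exploit the fact that near $0$ the function $f$ decreases strictly faster than the line $1-x$, which has slope $-1$. Since $f'(x) \to -\infty$ as $x \to 0^+$, we can, in particular, make $f'(x)$ smaller than any prescribed negative constant on a sufficiently small interval to the right of $0$.

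\medskip
\noindent\textbf{Step 1 (choose the interval).} Apply the definition of $\lim_{x\to 0^+} f'(x) = -\infty$ with threshold $-2$: there exists $\varepsilon > 0$ such that $f'(x) < -2$ for every $x \in (0,\varepsilon)$.

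\medskip
\noindent\textbf{Step 2 (introduce an auxiliary function).} Define
\begin{equation*}
    g(x) := f(x) - (1 - x), \qquad x \in (0,1).
\end{equation*}
Then $g$ is continuously differentiable on $(0,1)$, $g'(x) = f'(x) + 1 < -1$ on $(0,\varepsilon)$, and by hypothesis $\lim_{x \to 0^+} g(x) = 1 - 1 = 0$.

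\medskip
\noindent\textbf{Step 3 (integrate).} For arbitrary $x \in (0,\varepsilon)$ and any $y \in (0,x)$, the fundamental theorem of calculus gives
\begin{equation*}
    g(x) - g(y) = \int_y^x g'(t)\, dt < -(x-y).
\end{equation*}
Letting $y \to 0^+$ and using $g(y) \to 0$ yields $g(x) \le -x < 0$, i.e.\ $f(x) < 1 - x$, as required.

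\medskip
The only subtlety is in Step 3, namely passing to the limit $y \to 0^+$ inside the integral when $f$ is not assumed to extend continuously to $0$. This is handled by the existence of the one-sided limit $\lim_{y\to 0^+} g(y) = 0$ combined with the pointwise inequality for each fixed $y > 0$; no integrability of $g'$ on $(0,x)$ is needed since we only use the inequality $g(x) - g(y) < -(x-y)$ for $y > 0$ and then take a limit. This is where the assumption that $f$ has a limit at $0^+$ is essential; without it, one could have $f'(x) \to -\infty$ without any relation to the line $1-x$ near the origin.
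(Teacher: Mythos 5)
Your proof is correct and follows essentially the same route as the paper: invoke the definition of $\lim_{x\to 0^+}f'(x)=-\infty$ to get $f'< -2$ on some interval $(0,\varepsilon)$, then integrate up to $x$ and pass to the limit at the left endpoint using the existence of $\lim_{y\to 0^+}f(y)=1$. Your repackaging via the auxiliary function $g(x)=f(x)-(1-x)$ is cosmetic, and your remark that one should work with proper integrals $\int_y^x$ and then let $y\to 0^+$ (rather than writing an improper integral $\int_0^x f'$) is a minor but welcome tightening of the same argument.
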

\begin{proof}
Note that $\lim\limits_{y \rightarrow 0^+} \int_y^xf'(t)dt = \int_0^x f'(t) dt = f(x)-1 \Rightarrow f(x) = 1 + \int_0^{x} f'(t) dt$ and the integral is understood as taking the limit to 0 at the lower bound. \\\\
Since $\lim\limits_{x \rightarrow 0^+} f'(x) = -\infty$ we know $\exists \varepsilon > 0$ s.t. $x \in (0,\varepsilon) \Rightarrow f'(x) < -2$. \\\\
Thus, $\forall x \in (0,\varepsilon)$ we have
\begin{align*}
    1-x - f(x) &= 1-x - 1 - \int_0^x f'(t) dt\\
    & = - \int_0^x (f'(t)+1) dt\\
    & \geq -\int_0^x(-2+1)dt = \int_0^x dt = x > 0
\end{align*}
as required.
\end{proof}
We now proceed with proving Lemma~\ref{lemma:tightness}.
\begin{proof}[Proof of Lemma \ref{lemma:tightness}]
In fact we prove something stronger: Let $S_N$ be the sample mean of $N$ i.i.d. random variables with mean 0 and standard deviation $\sigma$. Then, $\exists \delta^* > 0$ such that for all sufficiently large $N$ we have
\begin{equation}\label{eq:lemma_tightness}
    \mathbb{P}\left(|S_N| \leq \gamma \sqrt{\frac{\log(2/\delta^*)}{N}} \frac{\sigma}{\sqrt{2}}\right) < 1-\delta^*
\end{equation}
The result in Lemma \ref{lemma:infinity} immediately follows from (\ref{eq:lemma_tightness}). Define
\begin{equation*}
    p_N = \mathbb{P}\left(|S_N| \leq \gamma \sqrt{\frac{\log(2/\delta)}{N}} \frac{\sigma}{\sqrt{2}}\right)
\end{equation*}
By the Central Limit Theorem we have
\begin{equation*}
    p_N \rightarrow \Phi\left(\frac{\gamma\sqrt{\log(2/\delta)}\sigma}{\sqrt{2}\sigma}\right) - \Phi\left(-\frac{\gamma\sqrt{\log(2/\delta)}\sigma}{\sqrt{2}\sigma}\right) = \erf\left(\frac{\gamma}{2} \sqrt{\log(2/\delta)}\right)
\end{equation*}
as $N \rightarrow \infty$, where $\Phi$ is the cumulative distribution function of $N(0,1)$. Let $\nu = \frac{\gamma}{2} < 1$ and $p(\delta):= \erf(\frac{\gamma}{2} \sqrt{\log(2/\delta)})$. We will now show that $\exists \delta^* \in(0,1/2)$ such that \begin{align*}
    p(\delta) < 1-\delta^*
\end{align*}
It is easy to see that
\begin{align*}
    \lim\limits_{\delta \rightarrow 0^+} p(\delta) = 1 = 1-\delta |_{\delta = 0}
\end{align*}
For $\delta > 0$ we have
\begin{align*}
    p'(\delta) &= - \frac{\nu}{\sqrt{\pi}} \frac{\exp(-\nu^2 \log(2/\delta))} {\delta \sqrt{\log(2/\delta)}}\\
    & = - \frac{\nu}{\sqrt{\pi}} \left( \frac{\delta}{2}\right)^{\nu^2} \frac{1}{\delta\sqrt{\log(2/\delta)}} \\
    & = -\frac{\nu}{2^{\nu^2}\sqrt{\pi}} \delta^{\nu^2-1} \frac{1}{\sqrt{\log(2/\delta)}} \\
    & = -\frac{\nu}{2^{\nu^2}\sqrt{\pi}} \frac{1}{\delta^{\beta} \sqrt{\log(2/\delta)}}
\end{align*}
where $\beta = 1-\nu^2$. Since $\nu < 1$ we have $\beta > 0$ which implies that $\delta^{\beta} \sqrt{\log(2/\delta)} \rightarrow 0$ as $\delta \rightarrow 0^+$ and therefore $\lim\limits_{\delta \rightarrow 0^+}p'(\delta) = -\infty$. By Lemma \ref{lemma:infinity}, there is a neighbourhood $(0,\varepsilon)$ s.t. $p(\delta) < 1-\delta$ whenever $\delta \in (0,\varepsilon)$. Thus, there is a $\delta^* \in (0,\varepsilon)$ such that $p(\delta^*) < 1 - \delta^*$. Since, $\lim\limits_{N \rightarrow +\infty} p_N(\delta^*) = p(\delta^*)$ $\exists M \in \mathbb{N}$ s.t. $N > M \Rightarrow p_N(\delta^*) < 1 - \delta^*$. Then choose any $N > M$ will be sufficiently large.
\end{proof} 
\end{appendices}

%Bibliography
\bibliographystyle{abbrv}
%\bibliography{bibliography} 

\end{document}